\newtheorem{theorem}{Theorem}
\newtheorem{proposition}[theorem]{Proposition}
\newtheorem{corollary}[theorem]{Corollary}
\newtheorem{lemma}[theorem]{Lemma}
\newtheorem{example}[theorem]{Example}
\newtheorem{definition}[theorem]{Definition}
\newcommand{\JM}{ \mathcal L }
\newcommand{\once}{11}
\newcommand{\diez}{10}
\newcommand{\diezR}{\color{red}10}
\newcommand{\doce}{12}
\newcommand{\doceR}{\color{red} 12}
\newcommand{\trece}{13}
\newcommand{\quince}{15}
\newcommand{\catorce}{ 14}
\newcommand{\cuarentacuatro}{ 44}
\newcommand{\veintidos}{22}
\newcommand{\treintatresR}{\color{red}33}
\newcommand{\II}{I_e}
\newcommand{\shape}{\textsf{shape}}
\newcommand{\MP}{{\rm Par }_{l,n}}
\newcommand{\F}{ { \mathbb F}}
\newcommand{\N}{ { \mathbb N}}
\newcommand{\No}{ { \mathbb N}_0}
\newcommand{\K}{\mathcal{K}}
\newcommand{\OO}{\mathcal{O}}
\newcommand{\A}{\mathcal{A}}
\newcommand{\Z}{\mathbb{Z}}
\newcommand{\BB}{\mathcal{B}^{\F}_{l,n}(\kappa)}
\newcommand{\B}{\mathcal{B}_n}
\newcommand{\BBB}{\mathcal{B}_{n+1}}
\newcommand{\Basis}{\mathcal{C}_n}
\newcommand{\res}{ \textrm{res} }
\newcommand{\q}{\hat{q}}
\newcommand{\m}{\mathfrak{m}}
\newcommand{\Par}{{\rm Par}_n}
\newcommand{\spa}{{\rm span}}
\newcommand{\s}{\mathfrak{s}}
\newcommand{\V}{\mathfrak{v}}
\newcommand{\T}{  \mathfrak{t}}
\newcommand{\g}{  \mathfrak{g}}
\newcommand\bi{\boldsymbol{i}}
\newcommand\bn{\boldsymbol{n}}
\newcommand\bj{\boldsymbol{j}}
\newcommand{\OnePar}{{ \rm Par}^1_{l,n}}
\newcommand{\MC}{{ {\rm Comp}}_{l,n}}
\newcommand{\MCm}{{ {\rm Comp}}_{l,m}}
\newcommand{\Si}{\mathfrak{S}}
\newcommand{\std}{{\rm Std}}
\newcommand{\nstd}{{\rm NStd}}
\newcommand{\tab}{{\rm Tab}}
\newcommand{\Snake}{{\rm Snake}}
\newcommand{\HH}{ \mathcal{H}_n}
\newcommand{\HHO}{ \mathcal{H}^{\OO}_n}
\newcommand{\HHK}{ \mathcal{H}^{\K}_n}
\newcommand{\HHtwo}{ \mathcal{H}_2}
\newcommand{\HHKtwo}{ \mathcal{H}^{\K}_2}
\newcommand{\HHOtwo}{ \mathcal{H}^{\OO}_2}
\newcommand{\R}{ \mathcal{R}_n}
\newcommand\blambda{{\boldsymbol\lambda}}
\newcommand\btau{{\boldsymbol\tau}}
\newcommand\bnu{{\boldsymbol\nu}}
\newcommand\bmu{{\boldsymbol\mu}}
\newcommand{\bT}{\pmb{\mathfrak{t}}}
\newcommand{\Bg}{\pmb{\mathfrak{g}}}
\newcommand{\bTI}{ \bT^{-1} (\bT_{\theta}^{\blambda}(1))}
\newcommand{\bTII}{ \bT^{-1} (\bT_{\theta}^{\blambda}(2))}
\newcommand{\bTj}{ \bT^{-1} (\bT_{\theta}^{\blambda}(j))}
\newcommand{\bTn}{ \bT^{-1} (\bT_{\theta}^{\blambda}(n))}
\newcommand{\Bs}{\pmb{\mathfrak{s}}}
\newcommand{\Bu}{\pmb{\mathfrak{u}}}
\newcommand{\Bv}{\pmb{\mathfrak{v}}}
\begin{document}
  \Yvcentermath1
\title{Graded cellular basis and Jucys-Murphy elements for generalized blob algebras}
\author{\sc  Diego Lobos Maturana{\thanks{Supported in part by CONICYT-PCHA/Doctorado Nacional/2016-21160722}} \, and steen ryom-hansen\thanks{Supported in part by FONDECYT grant 1171379  } }

\maketitle

\pagenumbering{roman}

\begin{abstract}
  We give a concrete construction of a graded cellular basis for the generalized blob algebra $ \B $
  introduced by Martin and Woodcock. The construction uses the isomorphism between KLR-algebras
  and cyclotomic Hecke algebras, proved by Brundan-Kleshchev and Rouquier. It 
  gives rise to a family of Jucys-Murphy elements for $ \B$.
\end{abstract}

\pagenumbering{arabic}

\section{Introduction}
This paper is concerned with the generalized blob algebra $ \B$ introduced by Martin and Woodcock.
Its representation theory has received a considerable amount of interest in recent years, see for example \cite{bcs}, \cite{bowman}, \cite{LiPl}.

\medskip
The original blob algebra $b_n =  b_n(q,m) $, also known as the 
Temperley-Lieb algebra of type $ B $, 
was introduced by Martin and Saleur via 
considerations in statistical mechanics. 
The usual Temperley-Lieb algebra $ TL_n = TL_n(q) $ can be realized as
a quotient of the Hecke algebra $ {\mathcal H}_n(q) $ of finite type $ A$
and similarly it has also been known for some time that $ b_n $ is a quotient of the two-parameter Hecke algebra $ {\mathcal H}_n(Q, q) $ of type
$ B$. 
Since $ {\mathcal H}_n(Q, q) $ 
is the special case $ l=2 $ of a cyclotomic Hecke algebra $ {\mathcal H}_n(q_1, \ldots, q_l ) $  
one could now hope that this construction make{\color{black}{s}} sense for any cyclotomic Hecke algebra. 
Martin and Woodcock showed in \cite{MW} that this indeed is the case.
They obtain $ b_n $ as the quotient of $ {\mathcal H}_n(Q, q) $ by the ideal generated by the idempotents for the irreducible $ {\mathcal H}_2(Q, q) $-modules
associated with the bipartitions $ ( (2), \emptyset) $ and $ (  \emptyset, (2)) $
and showed that this idea generalizes to every $ {\mathcal H}_n(q_1, \ldots, q_l ) $. 
The quotient algebras of $ {\mathcal H}_n(q_1, \ldots, q_l ) $ that arise this way are the generalized 
blob algebras $ \B = \B(q_1, \ldots, q_l )$ of the title. The parameter $ l $ is known as the level parameter and
the generalized blob algebras
can therefore be considered as the 
{\it Temperley-Lieb algebras at level $ l$}.

\medskip
We are interested in the modular, that is non-semisimple, representation theory of $ \B$. This is the case where 
the ground field $ \F $ is of positive characteristic or where 
the parameters $ q_i $ are
roots of unity. The modular representation theories of 
$ TL_n  $ and $ b_n$ are well understood and may be considered 
as approximations of the modular representation theory of $ \B$.
The modular representation theory of $ \B $ is more complicated. In characteristic $ 0$
it involves inverse Kazhdan-Lusztig polynomials of type $ \tilde{A}_{n-1}$, see \cite{bcs} and \cite{MW}, 
and in characteristic $ p$ it involves the $p$-canonical basis, at least conjecturally, see \cite{LiPl}.

\medskip
Cellular algebras were introduced 
by Graham-Lehrer as a general framework for studying modular representation theory. 
They are finite dimensional algebras endowed with a 
basis such that the structure constants with respect to the basis satisfy certain natural conditions.
A cellular algebra $ \cal A $ is always equipped with a family $ \left\{ \Delta(\lambda)  \right\} $ of 'cell modules' for 
$ \lambda $ running over a poset 
$ \Lambda $ 
which is part of the cellular basis data. Each cell module $ \Delta(\lambda)$ is endowed with a
billinear form $ \langle \cdot, \cdot \rangle $ 
and the irreducible modules $ \{ L(\lambda) \} $ all arise as quotients by the
radical of the form $ L(\lambda) = \Delta(\lambda) / {\rm rad} \langle \cdot, \cdot \rangle $.
Using this, there is for a cellular algebra $ \cal A $ a concrete way of obtaining the irreducible $  \cal A $-modules,
at least in principle.

\medskip
Two of the motivating examples for cellular algebra were the Temperley-Lieb algebra $ TL_n $ with its
diagram basis and the Hecke algebra $ {\mathcal H}_n(q)  $ with its cell basis derived from
the Kazhdan-Lusztig basis.  
In fact, one parameter Hecke algebras of finite type are always cellular, as was shown by Geck, \cite{Geck}.
For Hecke algebras $ {\mathcal H }(W,S) $ with unequal parameters associated with a finite Coxeter system, 
Lusztig's cell theory
depends on the choice of a \emph{weight function} on $ W$, and
conjecturally it leads to a cellular basis as well, see \cite{BGIL}.
For the cyclotomic Hecke algebra $ {\mathcal H}_n(q_1, \ldots, q_l ) $
there is also a concept of a \emph{weighting function} $ \theta $, 
which plays a key role for the Fock space approach to the representation theory
of $ {\mathcal H}_n(q_1, \ldots, q_l ) $, see \cite{A}, \cite{FLOTW}, \cite{JMMO}, \cite{Uglov}.  
For $ {\cal H }_n(Q,q) $ and for the zero weighting $ \theta_0 $, Lusztig's approach 
\emph{does} induce a cellular algebra structure on $ {\cal H }_n(Q,q) $
and this was shown in \cite{Steen1} to be compatible with the diagram basis on $ b_n $.

\medskip
In this work we show that $ \B $ is a cellular algebra with respect to the zero weighting. 
There is however neither a natural Temperley-Lieb like diagram basis nor a Lusztig cell theory available for $ \B $
and in fact our methods for showing cellularity of $ \B $ are completely new. They
are based on the seminal work by Brundan-Kleshchev and Rouquier that establishes an isomorphism
between the KLR-algebra $ \R $ and the cyclotomic Hecke algebra $ {\mathcal H}_n(q_1, \ldots, q_l ) $.
The KLR-algebra $ \R $ is a $ \mathbb Z $-graded algebra and our graded cellular basis on $ \B $ inherits
this $ \mathbb Z $-grading, making it a graded cellular basis. 

\medskip
The KLR-algebra has already been used by Hu-Mathas, \cite{hu-mathas},  
and by Plaza and Ryom-Hansen, \cite{PlazaRyom}, to construct $ \mathbb Z$-graded
cellular bases for $ {\mathcal H}_n(q_1, \ldots, q_l ) $ and for $ b_n(q) $,
but contrary to the present work those papers rely in a decisive way on already existing non-graded
cellular bases on the algebras in question.
Indeed Hu-Mathas rely in \cite{hu-mathas} on Murphy's standard basis for $ {\mathcal H}_n(q_1, \ldots, q_l ) $,
and in \cite{PlazaRyom} the diagram basis for $ b_n $ is needed in order to derive the
graded cellular bases.
Note that
Murphy's standard basis only exists for the classical dominance
order on $ \MP$, which is unrelated to the zero weighting.

\medskip
The representation theory of $ {\mathcal H}_n(q_1, \ldots, q_l ) $ is
parametrized by $l$-multipartitions $ \MP$ of $ n$ whereas the 
representation theory of $ \B $ is parametrized by one-column $l$-multipartitions $\OnePar$ of $ n $.
Our $ \mathbb Z $-graded cellular basis 
\begin{equation} \Basis = \{ m_{\Bs \bT} \mid \blambda \in \OnePar, \, \Bs, \bT \in \std(\blambda) \} \end{equation}
shares notationally several features of Murphy's standard basis and just like that basis it depends
on the existence of a unique maximal $\blambda$-tableau $ \bT^{\blambda}$ for each 
$ \blambda \in \OnePar$, with respect to $ \theta_0$. As we point out in section 2 of our paper, for $ \blambda \not\in \OnePar$ 
there are in general many maximal $\blambda$-tableaux and so our methods do not
generalize to give a cellular basis for $ {\mathcal H}_n(q_1, \ldots, q_l ) $, with respect to $ \theta_0$ 
and in particular we do not recover the more general results of Bowman and Stroppel-Webster
on Schur algebras for $ {\mathcal H}_n(q_1, \ldots, q_l ) $, 
see \cite{bowman}, \cite{StroppelWebster} and \cite{W}.

\medskip
Let us explain in more detail the contents of the paper.
In the following section 2 we set up the combinatorial concepts and notations that are needed for our work, including 
multipartitions, tableaux, and so on.
We also present the various order relations on multipartitions and tableaux that play a role throughout the paper. They all depend on the choice of a 
weighting $ \theta \in {\mathbb Z}^l $ 
and so the material of this section 2 is not completely standard. 
For $ \blambda \in \OnePar$ we prove a version of
Ehresmann's Theorem relating 
the order relation $ \unlhd_{\theta} $ on $ \tab(\blambda) $ with the Bruhat order on the symmetric group
$ \Si_n$.
Although this and a few other of our results are valid for general $ \theta $ we soon concentrate on the zero 
weighting $ \theta_0 $.

\medskip
Assume that $ q \in \F $ be a primitive $ e$'th root of unity and define $ \II := \Z/e \Z$. 
In section 3 we first introduce the concept of a strongly adjacency-free multicharge $ \hat{\kappa} $ and 
next give the formal definitions of the generalized blob algebra $\B $ and of the KLR-algebra $ \R$, in terms of generators
$\{\psi_1,\dots,\psi_{n-1}\}\cup{\{y_1,\dots,y_n\}}\cup{\{e(\bi) \mid \bi\in{\II^n}\}}$
and a long list of relations between them.
Both algebras depend on $ \hat{\kappa} $. 
We give diagrammatical as well as algebraic definitions of the algebras. At the end of the section we prove a series of simple Lemmas. All 
of our inductive arguments in the following sections are based on these Lemmas, or on the defining relations for $ \B$.

\medskip
In section 4 we obtain our first important results. 
The language used to present them is reminiscent of Murphy's theory, although the underlying combinatorics and hence the proofs are completely different.
For each $ \blambda \in \OnePar $ there is an associated $ \bi^{\blambda} \in  \II^n$ and 
we first show that only the idempotents $ e (\bi^{\blambda}) $ are needed for generating $ \B$.
The proof for this is a subtle induction argument. We introduce a symbolic notation for $ e (\bi) $ and $ y_i e (\bi)  $ which helps us formulate
the induction process and give several examples for the use of this notation. We consider the idempotents $ e (\bi^{\blambda}) $'s as analogs of
the initial elements $ m_{\blambda} $ of Murphy's standard basis and accordingly choose 
elements $ m_{\Bs \bT} = \psi_{d(\Bs)}^{\ast} e(\bi^{\blambda}) \psi_{d(\bT)} $, 
where $ \Bs$ and $ \bT $ run over $ \blambda $-multitableaux. 
We then show that these elements span $ \B$, once again arguing by induction over $ \blambda$. 
The final result of this section is the proof that only standard tableaux $ \Bs$ and $ \bT $ are needed for generating $ \B$.
For this result we develop a theory of Garnir tableaux which turns out to be quite different from the classical Garnir theory. 
In particular, our Garnir tableaux are not uniquely characterized by their point of non-standardness but 
still we are able to classify the Garnir tableaux of shape $ \blambda$ and this way prove our results.

\medskip
In section 5 we prove the linear independence of $ \{ m_{\Bs \bT} \}$ for 
$ \Bs$ and $ \bT $ running over standard $ \blambda $-multitableaux. 
For this we rely on the Brundan-Kleshchev and Rouquier isomorphism between 
$ \R $ and $ {\mathcal H}_n(q_1, \ldots, q_l ) $ 
together with Hu-Mathas's key insight, stating that the images of the idempotents $ e(\bi)  $
can be described in terms of the idempotents associated with 
Young's seminormal form.

\medskip
In section 6 we obtain our main result showing that the set $ \{ m_{\Bs \bT} \}$
is a graded cellular basis for $ \B $ with respect to the dominance order associated with
$ \theta_0 $ and that Jucys-Murphy elements $ \{ L_i \} $ for $ {\mathcal H}_n(q_1, \ldots, q_l ) $
are 
JM-elements with respect to this basis, in the sense of Mathas. Given that the set 
$ \{ m_{\Bs \bT} \}$ has been shown to be a basis
in the previous sections, the multiplicative conditions for being a cellular basis are reduced to
two combinatorial Lemmas, that we prove. This gives at the same time the JM-property.

\medskip
Finally, in the last section 7 we take the opportunity to show that our definition of $ \B $ is equivalent to the
original definition given by Martin-Woodcock in \cite{MW}. As indicated above,
in the original definition we have 
$ \B =  {\mathcal H}_n(q_1, \ldots, q_l )/J $ where $ J $ is the ideal generated by idempotents associated
with certain irreducible $ {\mathcal H}_2(q_1, \ldots, q_l )$-modules. These idempotents turn out to be
instances of the idempotents associated with Young's seminormal form, that already appeared in section 5. The
equivalence of the two definitions follows from this observation.

\medskip
It is a pleasure to thank Anton Cox for stimulating conversations during the initial phase of this project.
{\color{black}{It is also a great pleasure to thank the anonymous referee for many useful comments that helped us
    improve this article.}}

\section{Combinatorics and tableaux}
Let us recall the basic combinatorial concepts and notations associated with the representation
theory of the symmetric group $\mathfrak{S}_n$ and the wreath product $ C_l \wr \Si_n $.

We denote by $ \N $ the positive integers and by $ \No $ the non-negative integers.
For $n \in \No $, a \emph{composition} $\lambda$ of $n$
is a sequence $ \lambda = (\lambda_1,\lambda_2,\dots)$ of elements of $ \No $ 
such that $|\lambda|:=\sum_{k}\lambda_k=n$. If $ k $ is minimal such that
$\lambda_i=0  \, \mbox{ for all} \,  \, i > k$ we also
write $\lambda =(\lambda_1,\dots,\lambda_k)$ for  $\lambda$.
We say that a composition $\lambda= (\lambda_1, \lambda_2, \ldots) $ of $ n$ is a \emph{partition} of $n$ if it satisfies that $\lambda_k\geq \lambda_{k+1}$ for all $k\geq1$.

For integers $l  > 0$ and $ n \ge 0 $, an $l$-\emph{multicomposition} of $n$ is an $l$-tuple of
compositions $\blambda  =(\lambda^{(1)},\dots,\lambda^{(l)})$ such that $\sum_{m=1}^l{|\lambda^{(m)}|}=n$.
An $l$-\emph{multicomposition} $\blambda=(\lambda^{(1)},\dots,\lambda^{(l)})$ of $ n $ is called
an $l$-multipartition of $ n$  if all its components $ \lambda^{(i)} $ are partitions.
The set of all $l$-multicompositions of $n$ is denoted by $\MC$ and the
set of all $l$-multipartitions of $n$ is denoted by $\MP$.

Let
$\blambda=(\lambda^{(1)},\dots,\lambda^{(l)})$ be an $l$-multicomposition.
Then $\blambda $ is called a \emph{one-column} $l$-multicomposition
if all of its components $ \lambda^{(i)}$ are one-column compositions, that
is each $ \lambda^{(i)} $ is of the form
$ \lambda^{(i)} = ( \lambda_1^{(i)},  \lambda_2^{(i)}, \ldots,  \lambda_r^{(i)})$ where $  \lambda_j^{(i)} $ is either $ 0 $ or $1$ for all $ j$.

A \emph{one-column} $l$-\emph{multipartition} is a one-column $l$-multicomposition which is also an $l$-multipartition.
For $\blambda$ a one-column $l$-multipartition each of its components $\lambda^{(m)}$ is a partition of the form
$\lambda^{(m)} = (1,1,\ldots, 1) $ that is $\lambda^{(m)} = (1^{a_m}) $ where $ a_m =  | \lambda^{(m)}|$.
In other words, a one-column $l$-multipartition is of the form $\blambda=((1^{a_1}),\dots,(1^{a_l})) $ for
certain non-negative integers $ a_i$.
The set of all one-column $l$-multipartitions of $n$ is denoted by $\OnePar$.

We shall hold $ l $ fixed throughout the article, and
shall therefore frequently refer to $ l$-multicompositions (resp. $ l$-multipartitions, etc) simply as  multicompositions (resp. multipartitions, etc).

Let $ \lambda = (\lambda_1, \lambda_2, \ldots, \lambda_k)  $ be a composition of $ n$.
Then we represent $ \lambda $ graphically via its \emph{Young diagram} $ [\lambda]$.
We use English notation so it consists of an array of
$ k$ left adjusted lines of
boxes denoted the \emph{nodes} of the diagram, the first line containing $ \lambda_1 $ nodes, the second line $ \lambda_2 $ nodes, and so on.
The nodes are labelled using matrix convention, that is the $j$'th node of the $ i$'th line
of $ [\lambda]$
is labelled $(i,j)$ and in this case we write $ (i,j ) \in [\lambda]$. For example, if $ \lambda = (4,2,6,1) $ then the Young diagram $ [\lambda]$ is
$$ [\lambda]=  \yng(4,2,6,1) \, \, \, . $$

For an $l$-multicomposition
$ \blambda=(\lambda^{(1)},\dots,\lambda^{(l)})$ we define
its Young diagram $[ \blambda] $ to be the $l$-tuple of Young diagrams
$ ( [\lambda^{(1)}],\dots,[\lambda^{(l)}]) $. The nodes of $ \blambda $ are labelled by
the triples $ (i,j,k) $ where $ (i,j) $ is a node of $ [\lambda^{(k)}] $.
For example, if  $\blambda =((1,1,1,1),(1),(1,0,1))$ we have
  that    
  
 \begin{equation}\label{firstex}
   [\blambda]=\left(\,
   \begin{tikzpicture}[scale=1, baseline={([yshift=-\the\dimexpr\fontdimen22\textfont2\relax]
                    current bounding box.center)},]
     \tgyoung(0cm,0cm,;,;,;,;);
     \tgyoung(0.7cm,0cm,;,:,:,:); 
     \tgyoung(1.4cm,0cm,;,:,;,:); 
     \draw[black](1.4,0.45)--(1.4,-0.92);
     \node at(0.6,-0.5) {,};
     \node at(1.2,-0.5) {,};
   \end{tikzpicture}
       \, \right)
 \end{equation}
or if $\bmu=((1^4),(1^0),(1^3))$ we have that
\begin{equation}{\label{secondex}}[\bmu]=\left(\,    \gyoung(;,;,;,;) \, ,\emptyset, \, \gyoung(;,;,;,:)
   \, \right).
\end{equation}

 \medskip
For a multipartition $ \blambda$ we define the \emph{i'th row} of $ \blambda$ as the set of nodes of the form
$ (i,j,k) $.

\medskip

 There is a well known way to make $ \MC $ into a poset, the associated order relation being the
 dominance order on $ \MC $ studied for example in \cite{DJM}. However, this is not the only interesting
 order relation on $ \MC $.

Let us fix a tuple $ \theta = (\theta_1, \ldots, \theta_l ) \in  {\mathbb Z}^l $, called
a \emph{weighting}. Let
$ \gamma = (i,j,b) $ and $ \gamma^{\prime} = (i^{\prime},j^{\prime},b^{\prime}) $ be nodes
 of multipartitions $ \blambda$ and $ \bmu$, or more generally elements of
 $  \N \times \N \times \{1,\dots,l\}$.
Then we write $ \gamma \lhd_{\theta} \gamma^{\prime} $ if either $ (\theta_b + j - i) < (\theta_{b^{\prime}} + j^{\prime} - i^{\prime} )$
or if $ (\theta_b + j - i) = (\theta_{b^{\prime}} + j^{\prime} - i^{\prime} )$
and $ b > b^{\prime}$. (The last inequality is not an error). We write $  \gamma \unlhd_{\theta} \gamma^{\prime} $ if
$  \gamma \lhd_{\theta} \gamma^{\prime} $ or if $  \gamma = \gamma^{\prime} $.

This defines an order on $ \N \times \N \times \{1,\dots,l\}$ that we extend
to multipartitions as follows. 
Suppose that
$\blambda \in \MC$ and $ \bmu \in \MCm$. Then we write $\blambda \unlhd_{\theta} {\bmu}$ if for each $ \gamma_0 \in
  \N\times \N \times \{1,\dots,l\}$ we have that
  \begin{equation}\label{defdominance}
    | \{ \gamma \in{[\blambda]} \colon \gamma \rhd_{\theta}  \gamma_0  \}|\leq|\{ \gamma \in{[\bmu]}
     \colon \gamma \rhd_{\theta} \gamma_0\}|.
     \end{equation}
   This order relation $ \lhd_{\theta} $ depends highly on the initial choice of weighting $ \theta$.
   When restricted to $ \MP$ and choosing $ \theta $ such that
   $ \theta_i > \theta_{i+1} +n $ for all $ i $ we recover the dominance order used in [DJM] which we refer to as
   $ \unlhd_{\infty}$.
   This is the
   \emph{separated} case, but in this article we shall be mostly interested in another limit case, namely
   the one given by the zero weighting $ \theta = (0,0,\ldots, 0)$. We refer to the corresponding order as $ \unlhd_{0}$.

   \medskip
   Note that for $ l=1 $, we have that $ \unlhd_{\theta} $ is just the usual dominance order, for any $ \theta$.

\medskip
   
In general, the order $ \unlhd_{\theta} $ is only a partial order on the nodes of $\MP$ or 
$ \N \times \N \times \{1,\ldots, l\}$, but it becomes a total order upon restriction 
to the nodes of $ \OnePar$ or $ \N \times \{1\} \times \{1,\ldots, l\}$.
Using this we can prove the following useful Lemma that we shall use implicitly throughout the paper. It
says that $ {\blambda}\unlhd_{\theta}{\bmu}$ if and only if $ \bmu $ can be obtained from $ \blambda $ by
moving nodes of $ \blambda$ upwards.

\begin{lemma}\label{order-bijection}
Suppose that ${\blambda}, {\bmu} \in \OnePar$. Then ${\blambda}\unlhd_{\theta}{\bmu}$
  if and only if there is a bijection $\Theta:[{\blambda}]\rightarrow[{\bmu}]$ such that
  $\Theta(\gamma)\unrhd_{\theta}\gamma$ for all $\gamma\in[{\blambda}].$
\end{lemma}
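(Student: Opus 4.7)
My plan is to exploit the crucial fact, stated just before the lemma, that $\lhd_\theta$ restricts to a \emph{total} order on $\N \times \{1\} \times \{1,\ldots,l\}$, which contains all nodes of one-column multipartitions. Because of totality we may list the nodes of $[\blambda]$ in strictly decreasing order as $\gamma_1 \rhd_\theta \gamma_2 \rhd_\theta \cdots \rhd_\theta \gamma_n$, and similarly those of $[\bmu]$ as $\delta_1 \rhd_\theta \delta_2 \rhd_\theta \cdots \rhd_\theta \delta_n$; note that both lists have length $n$ since $|\blambda|=|\bmu|=n$.

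The direction $(\Leftarrow)$ is essentially immediate. If $\Theta$ as in the statement exists, then for any $\gamma_0 \in \N\times\N\times\{1,\ldots,l\}$ and any $\gamma \in [\blambda]$ with $\gamma \rhd_\theta \gamma_0$ we have $\Theta(\gamma) \unrhd_\theta \gamma \rhd_\theta \gamma_0$, so $\Theta$ restricts to an injection from $\{\gamma \in [\blambda] : \gamma \rhd_\theta \gamma_0\}$ into $\{\delta \in [\bmu] : \delta \rhd_\theta \gamma_0\}$, which gives the required inequality \eqref{defdominance}.

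For the direction $(\Rightarrow)$, my proposal is to define the bijection $\Theta$ by matching the sorted lists, that is $\Theta(\gamma_i) := \delta_i$ for $1 \le i \le n$, and then to verify $\delta_i \unrhd_\theta \gamma_i$ for each $i$ by contradiction. Suppose instead that $\gamma_i \rhd_\theta \delta_i$ for some $i$. Combined with $\gamma_1 \rhd_\theta \cdots \rhd_\theta \gamma_i$ this yields $\gamma_1, \ldots, \gamma_i$ all strictly $\rhd_\theta \delta_i$, so that $|\{\gamma \in [\blambda] : \gamma \rhd_\theta \delta_i\}| \ge i$. On the other hand, by the very choice of $\delta_i$ as the $i$-th element in the decreasing list, $|\{\delta \in [\bmu] : \delta \rhd_\theta \delta_i\}| = i-1$, since $\delta_1,\ldots,\delta_{i-1}$ are the only such nodes. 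Applying \eqref{defdominance} with $\gamma_0 = \delta_i$ gives $i \le i-1$, the desired contradiction.

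The only subtlety, and hence the main point to be careful about, is that the argument genuinely depends on $\lhd_\theta$ being total on the relevant set of nodes, so that the sorting is unambiguous and counts of strictly-larger nodes behave as expected; fortunately this is already guaranteed by the hypothesis $\blambda,\bmu \in \OnePar$. Everything else is bookkeeping, and no induction or case analysis on $\theta$ is required.
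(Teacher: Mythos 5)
Your proposal is correct and follows essentially the same route as the paper: both arguments hinge on the fact that $\unlhd_{\theta}$ is a total order on the nodes of one-column multipartitions, which the paper uses to identify the nodes with an initial segment of $\N$ and then asserts the equivalence "is easily checked". Your sorted-lists matching and the counting contradiction simply spell out the details the paper leaves to the reader, so no further comment is needed.
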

\begin{proof}
  As mentioned  $ \unlhd_{\theta} $ is a total order on the nodes of $ \N \times \{1\} \times \{1,\ldots, l\}$
  and so there is an order preserving
  bijection  
  from these 
  nodes to $ \N$, where $ \N $ is endowed with the opposite of the natural order, that is '1' is the maximal element.
  Using this, we may view $ \blambda $ and $ \bmu$ as ordered subsets of $ \N$. But in this situation
  one easily checks the equivalence of (\ref{defdominance}) with the existence of $ \Theta$.
  \end{proof}

To illustrate the difference between $ \unlhd_{\infty} $ and $ \unlhd_{0} $
we consider their restriction 
to $ \OnePar$. In each case there is a unique maximal element but the two maximal elements are different.
The unique maximal elements with respect to $ \unlhd_{\infty} $ is 
\begin{equation}
  \bmu_n^{max, \infty}:=
  ( (1^{n}), \emptyset,  \emptyset, \ldots, \emptyset) {\color{black}{)}}
 \end{equation} 
To describe $\bmu_n^{max, 0} $, the unique maximal element with respect to 
$ \unlhd_{0} $, we use integer division to write $ n = q l + r $ where $ q,l  \in \mathbb Z $ such that
$ 0 \leq r < l $. Then we have that 
$\bmu_n^{max, 0}$ is given by 
\begin{equation}{\label{lambda-max}}
  \bmu_n^{max, 0} = ( \overbrace{(1^{q+1}), \ldots, (1^{q+1})}^{r {\, terms}},
  \overbrace{(1^{q}), \ldots, (1^{q})}^{ l-r \, terms} ).
\end{equation}
For example, for $ n = 7 $ and $ l = 3 $ we have that
\begin{equation}{\label{maxpartitions}}
\bmu_n^{max, \infty}= \left(\,\gyoung(;,;,;,;,;,;,;), \, \emptyset,  \, \emptyset \right), \, \, \, \, \, \, \, \, \, 
\bmu_n^{max, 0}= \left(\,\gyoung(;,;,;), \, \gyoung(;,;,:),  \, \gyoung(;,;,:) \right).
\end{equation}

In general, with respect to $ \unlhd_{\infty} $ the big multipartitions tend to have their center of mass
to the left of the diagram, whereas with respect to $ \unlhd_{0} $ the big multipartitions tend to have their center
of mass in the middle of the diagram.

\medskip
For $ l = 2 $, the restriction of $ \unlhd_{0} $ to $ \OnePar $ is the total order
used for example in \cite{PlazaRyom} and \cite{Steen1}. 
Here is the $ n =3 $ case:
\begin{equation}
\left(  \emptyset, (1^3)  \right)  \unlhd_0    
\left((1^3),  \emptyset   \right)  \unlhd_0  
\left((1), \, (1^2)    \right)  \unlhd_0
  \left((1^2),  (1)  \right).
\end{equation}  
For $ l \ge 3$, the restriction of $ \unlhd_0 $ to $ \OnePar $ is only a partial order.
Here we illustrate the $ n =l=3 $ case:
\begin{equation}
\begin{tikzpicture}
\node at(0,0) {$ ( (1), (1),(1))$ };
\draw[](0,-0.2)--(0,-0.8);
\node at(0,-1) {$ ( (1^2), (1), \emptyset)$ };
\draw[](-0.2,-1.2)--(-2,-1.7);
\draw[](0.2,-1.2)--(2,-1.7);
\node at(-2,-2) {$ ( (1), (1^2), \emptyset)$ };
\draw[](2-0.2,-2.2)--(-2+0.2,-2.7);
\draw[](-2+0.2,-2.2)--(2-0.2,-2.7);
\draw[](2+0.2,-2.2)--(2+0.2,-2.7);
\draw[](-2-0.2,-2.2)--(-2-0.2,-2.7);
\node at(2,-2) {$ ( (1^2),  \emptyset, (1))$ };
\draw[](2+0.2,-3.2)--(2+0.2,-3.7);
\draw[](-2-0.2,-3.2)--(-2-0.2,-3.7);
\draw[](-2+0.2,-3.2)--(2-0.2,-3.7);
\node at(-2,-3) {$ ( (1), \emptyset,  (1^2) )$ };
\node at(2,-3) {$ (  \emptyset, (1^2),  (1))$ };
\draw[](2-0.2,-4.2)--(+0.2,-4.7);
\draw[](-2+0.2,-4.2)--(-0.2,-4.7);
\node at(-2,-4) {$ ( (1^3), \emptyset, \emptyset  )$ };
\node at(2,-4) {$ (  \emptyset, (1),  (1^2))$ };
\node at(0,-5) {$ (  \emptyset, (1^3),  \emptyset)$ };
\draw[](0,-5.2)--(0,-5.8);
\node at(0,-6) {$ (  \emptyset,   \emptyset, (1^3)) {\color{black}.}$};
\end{tikzpicture}  
\end{equation}

\medskip
 Let ${\lambda}$ be a composition of $n$. A \emph{tableau} of shape $\lambda$ or simply
 a $\lambda$-\emph{tableau} is a bijection $\T:\{1,\dots,n\} \rightarrow[\lambda]$.
 In this case we write $\shape(\T)=\lambda$.
 A $\lambda$-tableau $\mathfrak{t}$ is represented graphically via a labelling of the nodes
 of $ [\lambda]$ using the numbers $ \{ 1, 2, \ldots, n\} $ where the labelling
 of the node $ (i,j) $ is given by $ \T^{-1}(i,j) $. 
 In this case we say that the $(i,j)$'th node of $ \T $ is \emph{filled in with} $ \T^{-1}(i,j) $ via $\T$.
Let $\blambda$ be an $l$-multicomposition.
The concept of $\blambda$-\emph{tableaux} is defined the same way as for ordinary $ \lambda$-tableaux,
 that is a $\blambda$-{tableau} is a bijection $ \bT:\{1,\dots,n\} \rightarrow  [\blambda].$

 A $ \lambda$-tableau $\mathfrak{t}$ is called \emph{standard} if the corresponding labelling
 of $ [\lambda] $ has increasing numbers from left to right along rows and from top to bottom along columns.
 Similarly, for a tableau ${\bT}$ of a multicomposition $ \blambda$ we say that it is standard if all its components
 are standard.
 For a composition $\lambda$, we denote by $ \tab(\lambda) $ and ${\std}({\lambda})$
 the set of all $ \lambda$-tableaux and the set of all standard $ \lambda$-tableaux and we use a similar
 similar notation for $\blambda$-tableaux of a multicomposition $\blambda$.

 For a composition ${\lambda}$
 and a ${\lambda}$-{tableau} ${\T}$ and $1\leq k\leq n$ we denote by ${\T} \! \mid_k$ the restriction of $\T$
 to the set $ \{1,2, \ldots, k \}$. A similar notation is used for tableaux for multipartitions.
 Let $\bmu$ be as in ({\ref{secondex}). Then the following are $\bmu$-tableaux
\begin{equation}{\label{tableauxex}}
\bT= \left(\,    \gyoung(;1,;4,;5,;7) \, ,\emptyset, \, \gyoung(;2,;3,;6,:)  \, \right), \, \,
\Bs =\left(\,    \gyoung(;1,;5,;4,;6) \, ,\emptyset, \, \gyoung(;3,;2,;7,:)  \, \right)
\end{equation}
but only the first is standard.
Note that for all $1\leq k \leq n$ we have that $\shape(\mathfrak{t} \! \mid_k)$ is a multipartition,
but in the case of $\Bs$ we have $$\shape(\Bs \! \mid_4)=((1,0,1), {\color{black}{ \emptyset}}, (1,1))$$ which is not a
multipartition, only a multicomposition.

We extend the order $ \unlhd_{\theta}$ to tableaux for multipartitions
$ n$, as follows.
Let $ \blambda$ and $ \bmu$ be multicompositions of $ m $ and $ n $ and
let $\Bs$ and $\bT$ be tableaux of shapes $ \blambda$ and $ \bmu$. Then we write ${\bT}\unlhd_{\theta} \Bs$
if for all $ 1 \le  k \le \min(m,n) $ we have that
$$\shape( \bT \!  \mid_k)\unlhd_{\theta} \shape(\Bs \! \mid_k).$$
For example, considering the tableaux $\Bs $ and $ \bT$ from ({\ref{tableauxex}})
we have that $\Bs \lhd_{0} \bT$.

\medskip
Let $ \blambda \in \MP$ 
be a multipartition and let $ \gamma \in \N \times \N \times \{1,\dots,l\} \setminus [ \blambda] $.
Then we say that $ \gamma $ is an \emph{addable node} for $ \blambda $ if $ [ \blambda] \cup \gamma $ is the Young diagram of 
a multipartition. Dually we say that $ \gamma \in  [ \blambda] $
is a \emph{removable node} for $ \blambda $ if $ [ \blambda] \setminus \gamma $ is the Young diagram of a multipartition.
The set of addable (removable) nodes for $ \blambda $ is totally ordered under $ \unlhd_{\theta}$.

\medskip
For $ \blambda \in \MP$ we now define multipartitions $ \blambda_{\theta,0}, \ldots, \blambda_{\theta, n}  \in \MP $
recursively via $ \blambda_{\theta,0} := ( \emptyset, \ldots, \emptyset) $ 
and for $ i > 0 $ via $ [\blambda_{\theta,i}] := [\blambda_{i-1}]  \cup \gamma_{\theta, i}$ where
$ \gamma_{\theta, i} \in [\blambda] $ satisfies the condition that it is the largest addable node for $ \blambda_{i-1} $, with respect to $ \unlhd_{\theta}$.
We denote by $ \bT_{\theta}^{\blambda} $ the $ \blambda$-tableau which is given by $ \bT_{\theta}^{\blambda}(i) =
\gamma_{\theta, i}$. If $ \theta = \theta_{\infty}$ we write $ \bT_{\infty}^{\blambda}$ for
$ \bT_{\theta}^{\blambda} $ and if
$ \theta = \theta_{0}$ we write $ \bT_{0}^{\blambda}$ for
$ \bT_{\theta}^{\blambda} $.

\medskip
Suppose that 
$ \blambda \in  \OnePar$.
Then $   \bT_{\infty}^{\blambda} $ is the unique maximal element 
in $ \tab(\blambda) $ and $ \std(\blambda) $ with respect to $ \unlhd_{\infty} $. It is the $ \blambda$-tableau obtained by filling in the nodes of $ [ \blambda]$
from left to right along the columns. For example, for $ \blambda = ((1^3), (1^3), (1^2)) $ it
is
\begin{equation} \bT_{\infty}^{\blambda}= \left(\gyoung(;1,;2,;3),\gyoung(;4,;5,;6),\gyoung(;7,;8,:)\right). \end{equation}

Let still $\blambda \in \OnePar $. Then $ \bT_{0}^{\blambda}$  is the unique maximal element
in $ \tab(\blambda) $ and $ \std(\blambda) $ with respect to $ \unlhd_{0} $. It is 
the $\blambda$-tableau $\bT^{\blambda}$ 
in which $1,\dots,n$ are filled in increasingly along the rows of $ \blambda$. For example, for $ \blambda = ((1^3), (1^3), (1^2)) $ it
is 
\begin{equation}
  \bT_0^{\blambda}= \left(\gyoung(;1,;4,;7),\gyoung(;2,;5,;8),\gyoung(;3,;6,:)\right).
\end{equation}  
The tableau $ \bT_{\theta}^{\blambda} $ plays an important role in our paper, especially for $ \theta = \theta_0$, so let us prove
formally the claim on maximality of $ \bT_{\theta}^{\blambda} $.

\medskip

Let first $\mathfrak{S}_n$ be the symmetric group on $ \bn := \{1, \ldots, n \} $, and let $S=\{s_1,\dots,s_{n-1}\}$ be its subset
of \emph{simple transpositions}, i.e. for each $k=1,\dots,n-1 $ we have that $ s_k=(k,k+1)$.
It is well known that $\mathfrak{S}_n$ is a Coxeter group on $ S$.
For any multicomposition $ \blambda$ of $n$ we have that $\mathfrak{S}_n$ acts on the right on
$\tab(\blambda)$ by permuting the entries inside a given tableaux. Thus, if $ w = s_{i_1} s_{i_2} \cdots  s_{i_N} $ where $ s_{i_j} \in S $ and
if $ \bT \in \tab(\blambda)$ we have that $ \bT w= ( \cdots ((\bT s_{i_1}) s_{i_2} \cdots )  s_{i_N}) $.

We next need to introduce yet another order on $ \tab(\blambda)$. 
Let $ \blambda $ be a multipartition and
let $\bT,\Bs$ be ${\blambda}$-tableaux.
For $s \in{S}$ we define $\bT \stackrel{s}{\rightarrow} \Bs$ if 
$\Bs=\bT s$ and $\Bs \rhd_{\theta} \bT$. We let $ \succ_{\theta} $ be the order on $ \tab(\blambda) $
induced by $\bT \stackrel{s}{\rightarrow} \Bs$ for all $ s \in S$, that is 
$\Bs \succ_{\theta} \bT$
if there is a finite sequence $$\bT_0 \stackrel{s_{i_1}}{\rightarrow} \bT_1
\stackrel{s_{i_2}}{\rightarrow} \dots \stackrel{s_{i_k}}{\rightarrow} \bT_k$$ with $\bT_0=\bT$
and $\bT_k=\Bs$. We call $ \succ_{\theta}$ the weak order on $ \tab(\blambda)$. 
It is clear that $\Bs\succ_{\theta}\bT\Rightarrow\Bs\rhd_{\theta}\bT,$ but the converse is false in general.
Consider for example ${\bmu}=((1^3),(1^3),(1^2))$
and the $\bmu$-tableaux
$$ \bT = \left(\, \gyoung(;1,;6,;5),\gyoung(;2,;4,;8),\gyoung(;3,;7,:) \, \right), \, \, \, \, 
 \Bs=  \left(\, \gyoung(;1,;6,;5),\gyoung(;2,;7,;8),\gyoung(;3,;4,:) \, \right). $$
Then with respect to $ \theta = (0,0,\ldots, 0) $ we have that $ \bT \rhd_{\theta} \Bs $ but  $ \bT \nsucc_{\theta} \Bs $.  

\medskip
We can now prove the promised claim for $ \bT_{\theta}^{\blambda}$.

\begin{lemma}\label{t-lamb-max}
  Suppose that ${\blambda} \in \OnePar$.
\begin{itemize}
\setlength\itemsep{-1.1em}
\item[a)]
 Let $ \bT \in \tab(\blambda) $ and set $ \Bs = \bT s_k $. Suppose that 
  $ \bT(k) \lhd_{\theta} \bT(k+1) $. 
Then we have that $ \bT \prec_{\theta} \Bs $.  \\
\item[b)] We have that $\bT_{\theta}^{\blambda}$ is the unique maximal element
  in $ \tab(\blambda) $ and $ \std(\blambda) $ with respect to $ \prec_{\theta} $ and $ \lhd_{\theta}$.
\end{itemize}
\end{lemma}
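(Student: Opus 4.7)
The plan is as follows. For part (a), the key observation is that $\Bs = \bT s_k$ is obtained from $\bT$ by swapping the entries at positions $k$ and $k+1$. Consequently $\{\bT(1), \ldots, \bT(m)\} = \{\Bs(1), \ldots, \Bs(m)\}$ as sets for every $m \neq k$, so $\shape(\bT \! \mid_m) = \shape(\Bs \! \mid_m)$ and only $m = k$ can produce a nontrivial comparison. The restricted node sets $\shape(\bT \! \mid_k)$ and $\shape(\Bs \! \mid_k)$ share the $k-1$ common nodes $\bT(1), \ldots, \bT(k-1)$ and differ only in that the former contains $\bT(k)$ while the latter contains $\bT(k+1)$. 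Define a bijection on the underlying node sets fixing the $k-1$ common nodes and sending $\bT(k) \mapsto \bT(k+1)$; by hypothesis this bijection is $\unrhd_{\theta}$-increasing, so the same argument that proves Lemma \ref{order-bijection}---which relies only on the totality of $\unlhd_{\theta}$ on $\N \times \{1\} \times \{1, \ldots, l\}$ and therefore transfers verbatim to one-column multicompositions---delivers $\shape(\bT \! \mid_k) \lhd_{\theta} \shape(\Bs \! \mid_k)$, with strict inequality because $\bT(k) \neq \bT(k+1)$. Thus $\Bs \rhd_{\theta} \bT$, whence $\bT \stackrel{s_k}{\longrightarrow} \Bs$ and in particular $\bT \prec_{\theta} \Bs$.

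For part (b), I would first give a combinatorial description of $\bT_{\theta}^{\blambda}$. Since $\blambda \in \OnePar$, the order $\unlhd_{\theta}$ is total on $[\blambda]$, and at each stage of the greedy construction the largest \emph{addable} node of $\blambda_{\theta,i-1}$ in fact coincides with the largest \emph{remaining} node of $[\blambda] \setminus [\blambda_{\theta,i-1}]$: if the largest remaining node $(j,1,b)$ had $j > 1$ and its parent $(j-1,1,b)$ were still absent, then the parent would be strictly $\unrhd_{\theta}$-larger and still available, a contradiction. Consequently $\bT_{\theta}^{\blambda}$ is the unique $\blambda$-tableau in which $1, 2, \ldots, n$ label the nodes of $[\blambda]$ in strictly decreasing $\unlhd_{\theta}$-order; in particular $\bT_{\theta}^{\blambda} \in \std(\blambda)$ and $\bT_{\theta}^{\blambda}(k) \rhd_{\theta} \bT_{\theta}^{\blambda}(k+1)$ for every $k$.

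Next I would prove maximality by a bubble-sort argument. Given any $\bT \in \tab(\blambda)$ with $\bT \neq \bT_{\theta}^{\blambda}$, the characterization above guarantees an index $k$ with $\bT(k) \lhd_{\theta} \bT(k+1)$, and part (a) gives $\bT \prec_{\theta} \bT s_k$. Setting $\inv_{\theta}(\bT) := |\{(i,j) : 1 \leq i < j \leq n,\ \bT(i) \lhd_{\theta} \bT(j)\}|$, such a swap strictly decreases this nonnegative integer, so iterating produces a finite chain $\bT \prec_{\theta} \bT s_{k_1} \prec_{\theta} \cdots \prec_{\theta} \bT_{\theta}^{\blambda}$ terminating at the unique tableau with no adjacent inversion. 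This shows $\bT_{\theta}^{\blambda}$ is the unique $\prec_{\theta}$-maximum of $\tab(\blambda)$, and the implication $\succ_{\theta} \Rightarrow \rhd_{\theta}$ already noted in the text immediately upgrades the conclusion to $\lhd_{\theta}$-maximality (uniqueness for $\lhd_{\theta}$ follows because every other $\bT$ is strictly $\lhd_{\theta}$-dominated along the chain). Finally, $\bT_{\theta}^{\blambda} \in \std(\blambda) \subseteq \tab(\blambda)$, so both conclusions restrict to $\std(\blambda)$ even though the intermediate tableaux in the bubble-sort chain may fail to be standard.

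The main subtlety is the use of Lemma \ref{order-bijection} in part (a): as the paper notes just after (\ref{tableauxex}), the shapes of restrictions $\bT \! \mid_k$ and $\Bs \! \mid_k$ need not be multipartitions, only one-column multicompositions. This poses no real obstruction because the proof of Lemma \ref{order-bijection} invokes only the totality of $\unlhd_{\theta}$ on one-column positions, a feature equally available for one-column multicompositions.
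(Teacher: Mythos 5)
Your proof is correct and follows essentially the same route as the paper: part (a) shows that swapping an adjacent $\lhd_{\theta}$-inversion strictly increases the tableau in the weak order, and part (b) bubble-sorts an arbitrary tableau up to $\bT_{\theta}^{\blambda}$. The only differences are cosmetic: where the paper compares restricted shapes after transporting everything through the order-preserving bijection $\Phi_{\theta}$ onto the single column $(1^n)$, you compare restricted shapes directly via (the easy direction of) Lemma \ref{order-bijection}, and you spell out details the paper leaves implicit, namely that $\bT_{\theta}^{\blambda}$ lists the nodes of $[\blambda]$ in decreasing $\unlhd_{\theta}$-order and that the swap sequence terminates (inversion count).
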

\begin{proof}
  The nodes of $ \blambda $ are totally ordered with respect to $ \lhd_{\theta}$, and
  we have $$ \bT^{\blambda}(i) \lhd_{\theta} \bT^{\blambda}(j) \mbox{   iff    } i > j.$$
  Let $ \omega $ be the one-column partition $ \omega := (1^n)$. The nodes of
  $ \omega $ are also totally ordered, with respect to the usual dominance order $ \lhd$, 
and hence there is
  a unique order preserving bijection
  \begin{equation}
    \Phi_{\theta}:   \tab(\blambda) \rightarrow \tab(\omega).
\end{equation}
  For example, for $ \theta= \theta_0 $ and $ \blambda= ( (1^5), (1^2), (1^6) )$ we have that $ \omega= (1^{13})$ and so
\begin{equation}
  \Phi_{\theta}:
  \left(\, \gyoung(;2,;1,;3,;7,;\once,:),\gyoung(;4,;6,:,:,:,:),\gyoung(;5,;9,;<10>,;<13>,;<8>,;<12>) \, \right)
  \mapsto
  \left(\, \gyoung(;2,;4,;5,;1,;6,;9,;3,;<10>,;7,;<13>,;\once,;<8>,;<12>) \, \right).  \, 
\end{equation}  
Note that $ \Phi_{\theta}(\bT^{\blambda}_{\theta}) = \T^{\omega}$.   
Let us now prove $a) $ of the Lemma. We have that 
\begin{equation} 
  \Phi_{\theta}(\bT)= \left(\, \, \gyoungx(3.5,<\bTI>,<\bTII>,\vdts,<k+1>,\vdts,<\bTj>,\vdts,<k>,\vdts,<\bTn>) \, \right), 
\, \, \, \, \, \, \, \, \, \, \, \, \, \, \, \, \, \, \, \, 
  \Phi_{\theta}(\Bs)= \left(\, \, \gyoungx(3.5,<\bTI>,<\bTII>,\vdts,<k>,\vdts,<\bTj>,\vdts,<k+1>,\vdts,<\bTn>) \, \right)
   \end{equation}
and so we have 
\begin{equation}
 \Phi_{\theta}( \shape(\Bs \! \mid_j))   =  \Phi_{\theta}( \shape(\bT \! \mid_j))
\end{equation}  
for all $ j \neq k$ and 
\begin{equation}
\Phi_{\theta}(  \shape(\Bs \! \mid_k))   \rhd  \Phi_{\theta}( \shape(\bT \! \mid_k))
\end{equation}  
and so $a)$ follows.
In order to prove $ b) $ of the Lemma, we get from $a)$ that for any $\blambda$-tableau $ \bT \neq \bT_{\theta}^{\blambda} $ 
there is a sequence of simple reflections $ s_{i_1}, \ldots, s_{i_N} $ such
that
\begin{equation}\label{sequenceRe} \bT \lhd_{\theta} \bT s_{i_1} \lhd_{\theta} \bT s_{i_1} s_{i_2} \lhd_{\theta} \ldots \lhd_{\theta} \bT s_{i_1} s_{i_2}
  \cdots s_{i_N} = \bT_{\theta}^{\blambda},
\end{equation}  
that is $ \bT \prec_{\theta} \bT_{\theta}^{\blambda}$.
Since this holds for any $ \bT \neq \bT_{\theta}^{\blambda} $ we deduce that $ \bT_{\theta}^{\blambda} $
is the unique maximal tableau in $ \tab(\blambda) $ with respect
to both $ \prec_{\theta} $ and $ \lhd_{\theta}$. 
In order to show that $ \bT_{\theta}^{\blambda} $ is also the unique maximal tableau in $ \std(\blambda) $
we use that if $ \bT \in \std(\blambda) $ then each term of the chain 
(\ref{sequenceRe}) also belongs to $ \std(\blambda) $. 
The Lemma is proved.
\end{proof}

We observe that if $ \blambda $ is not a one-column multipartition then there is in general 
not a unique maximal element in $ \std(\blambda) $ with respect to $ \prec_0$ or $ \lhd_0$. 
Consider for example $ \blambda = ((1), (2) ) $ with its two standard $ \blambda$-tableaux
\begin{equation}
  \bT^{\blambda}=  \left(\, \gyoung(1),\gyoung(2;3) \, \right),\, \, \, \, \, \, \, \, \, 
  \Bs=  \left(\, \gyoung(3),\gyoung(1;2) \, \right).
\end{equation}  
These are both maximal in $ \std(\blambda) $ with respect to $ \prec_0$ and $ \lhd_0$. 
This observation is the main reason why the methods of our paper do not generalize in a straightforward
way to general multipartitions.

\medskip

Let $ l(\cdot) $ be the  length function on $\mathfrak{S}_n$, viewed as a Coxeter group, and let $ < $ be the Bruhat order on $\mathfrak{S}_n$
with the convention that the identity element $ 1\in \mathfrak{S}_n $ is the largest element.
Let $ \lambda $ be a usual partition.
For $ \T \in \tab(\lambda) $ we define $ d(\T) \in \mathfrak{S}_n$ by the condition $ \T^{\lambda} d(\T ) = \T$. Since
the action of $ \Si_n$ is transitive and faithful we have that $ d(\T)$ is well defined and unique.
For $ \blambda $ a one-column multipartition and $ \bT \in \tab(\blambda) $ we define $ d(\bT)$ in a similar way,
using $ \bT_{\theta}^{\blambda}$. 
Our next aim is to show a compatibility between the Bruhat order on $ \Si_n $ and the order $ \lhd_{\theta} $
on $ \tab(\blambda)$.
In the case of the usual dominance order $ \lhd $ on $ \tab(\lambda) $ this result was
proved originally by Ehresmann.
In fact we shall
deduce our version of the Theorem from the original Ehresmann Theorem. Let us recall it.

\begin{theorem}\label{Teo-Ehr-Original}
  Suppose that $ \lambda $ is a partition of $ n $ and that $ \s,  \T \in \tab(\lambda) $ are row standard.
  Then we have that $d(\s) < d(\T)$ if and only if $\s \lhd {\T}$.
\end{theorem}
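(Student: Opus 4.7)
The plan is to prove the equivalence by matching covering relations in the two posets, which is the classical strategy for Ehresmann-type statements.

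First I would characterise the covering relations $\s \lessdot \T$ in the dominance order on row-standard $\lambda$-tableaux as elementary row swaps: there exists an index $a$ such that $a$ lies in row $i$ of $\s$ while $a+1$ lies in a strictly higher row $j<i$, and $\T = \s \cdot (a,a+1)$. Row-standardness of $\T$ is automatic because $a$ and $a+1$ live in different rows of $\s$, and one verifies that $\shape(\T\!\mid_k) = \shape(\s\!\mid_k)$ for every $k \neq a$, while $\shape(\T\!\mid_a)$ differs from $\shape(\s\!\mid_a)$ by moving one cell from row $i$ up to row $j$, giving $\shape(\T\!\mid_a) \rhd \shape(\s\!\mid_a)$ and hence $\T \rhd \s$. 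From the associativity of the right action $(\s w)(k) = \s(w(k))$ I would derive $d(\T) = d(\s)\cdot s_a$, and using that $\T^\lambda$ fills its rows consecutively with $1,2,\ldots,n$, a direct computation yields $d(\s)(a) = \lambda_1 + \cdots + \lambda_{i-1} + c$ and $d(\s)(a+1) = \lambda_1 + \cdots + \lambda_{j-1} + c'$ (where $c, c'$ are the columns of $a, a+1$ in $\s$). Since $j<i$ one concludes $d(\s)(a) > d(\s)(a+1)$, whence $l(d(\T)) = l(d(\s)) - 1$ in the standard Coxeter length convention. In the paper's reversed Bruhat convention this reads: $d(\T)$ covers $d(\s)$ from above, so in particular $d(\s)<d(\T)$. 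Chaining along a saturated chain in the dominance interval $[\s,\T]$ then proves the forward direction.

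For the converse I would invoke the tableau / subword criterion for Bruhat order on $\Si_n$: in the standard convention, $u \leq v$ iff for every $k$ the increasing rearrangement of $\{u(1),\dots,u(k)\}$ is pointwise dominated by that of $\{v(1),\dots,v(k)\}$. Row-standardness of $\s$ means that $d(\s)$ is the unique minimal-length representative of its right $\Si_\lambda$-coset, so the multiset $\{d(\s)(1),\dots,d(\s)(k)\}$ is determined by $\shape(\s\!\mid_k)$: the $j$th row of $\shape(\s\!\mid_k)$ has size equal to the number of values $d(\s)(i)$ with $i \le k$ lying in the interval $(\lambda_1+\cdots+\lambda_{j-1},\, \lambda_1+\cdots+\lambda_j]$. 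Translating, the pointwise Bruhat inequality on $d(\s)$ and $d(\T)$ becomes exactly the rowwise dominance inequality between $\shape(\s\!\mid_k)$ and $\shape(\T\!\mid_k)$ for every $k$, which is $\s\unlhd\T$. The reversal of the Bruhat convention then flips direction one last time to give $d(\s) < d(\T) \Rightarrow \s\lhd\T$.

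The main obstacle is careful bookkeeping across the interacting conventions (reversed versus standard Bruhat, together with the direction of dominance) and the verification that the covering relations in dominance on row-standard tableaux are exactly the adjacent transpositions described above. The latter requires ruling out more complicated covers, which I would do by letting $k_0$ be the smallest index with $\shape(\s\!\mid_{k_0})\neq \shape(\T\!\mid_{k_0})$ and arguing, by a case analysis on the positions of $k_0$ in $\s$ and $\T$, that any proper intermediate row-standard tableau can be inserted between $\s$ and $\T$ unless the swap is of the elementary form already described. Once this is settled, both directions follow uniformly from the single-cover check.
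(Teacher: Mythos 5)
The paper does not actually prove Theorem \ref{Teo-Ehr-Original}: it is recalled as Ehresmann's classical theorem and then used as a black box to deduce Theorem \ref{Teo-Ehr}. So your proposal has to be judged on its own correctness, and there it has one sound half and one genuinely flawed half.

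Your second paragraph is essentially the standard proof and is correct: since $\s$ is row standard, $d(\s)$ is the minimal length representative of $\Si_\lambda d(\s)$, so the set $\{d(\s)(1),\dots,d(\s)(k)\}$ is the union, over rows $j$, of the initial segments of length $(\shape(\s\!\mid_k))_j$ of the intervals $(\lambda_1+\cdots+\lambda_{j-1},\lambda_1+\cdots+\lambda_j]$; the tableau criterion for Bruhat order, in its prefix-counting form, then translates for every $k$ into the dominance comparison of $\shape(\s\!\mid_k)$ and $\shape(\T\!\mid_k)$ (a short computation with $\min(a_m,r)$ that you should spell out, but it works). Note, however, that every step of this translation is an equivalence, so this argument already yields \emph{both} implications; the covering-relation machinery of your first paragraph is unnecessary.

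That is fortunate, because the first paragraph rests on a false lemma: covering relations in the dominance order on row-standard $\lambda$-tableaux are \emph{not} always swaps of $a$ and $a+1$. Take $\lambda=(1,1,1)$ and let $\T$ and $\s$ be the tableaux whose rows, read downwards, contain $2,3,1$ and $2,1,3$ respectively. Then $\T \lhd \s$, and one checks directly that none of the other four tableaux lies strictly between them, so $\T \lessdot \s$ is a cover; yet $\s=\T\,(1,3)$ is obtained by exchanging the entries $1$ and $3$, not two consecutive entries. In Coxeter terms, $d(\T)=312$ and $d(\s)=213$, and this is a Bruhat cover given by right multiplication by the non-simple transposition $(1,3)$. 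More structurally, the relation generated by your elementary swaps is exactly the weak order $\succ$, and the paper itself points out, just before Lemma \ref{t-lamb-max}, that $\succ$ is strictly weaker than $\rhd$; the example above shows this already for a single column, where all tableaux are row standard. Consequently the proposed case analysis "ruling out more complicated covers" cannot succeed, and the forward direction should instead be obtained by running your converse translation in both directions (or by the usual lifting/induction argument on a descent of $d(\T)$).
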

Here is our generalization of this Theorem. 
\begin{theorem}\label{Teo-Ehr}
  Let ${\blambda}$ be a one-column multipartition of $ n$ and suppose that ${\bT}$ and ${\Bs}$ are ${\blambda}$-{tableaux}.
  Then $d(\Bs) < d(\bT)$ if and only if $\Bs \lhd_{\theta} {\bT}.$
\end{theorem}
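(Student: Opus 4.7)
The plan is to reduce Theorem~\ref{Teo-Ehr} to the classical Ehresmann Theorem~\ref{Teo-Ehr-Original} by transporting the problem along the order-preserving bijection $\Phi_\theta\colon \tab(\blambda) \to \tab(\omega)$ that was already introduced in the proof of Lemma~\ref{t-lamb-max}, where $\omega := (1^n)$ denotes the usual one-column partition of $n$. Recall that $\Phi_\theta$ is induced by the unique order-preserving bijection between the totally ordered sets $([\blambda], \lhd_\theta)$ and $([\omega], \lhd)$, and that Lemma~\ref{t-lamb-max} in particular gives $\Phi_\theta(\bT_\theta^{\blambda}) = \T^\omega$.

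First I would establish two compatibility properties of $\Phi_\theta$. Since the $\Si_n$-action on tableaux permutes entries rather than positions, $\Phi_\theta$ is $\Si_n$-equivariant, i.e.\ $\Phi_\theta(\bT w) = \Phi_\theta(\bT)\, w$ for all $w \in \Si_n$ and $\bT \in \tab(\blambda)$. Combined with $\Phi_\theta(\bT_\theta^{\blambda}) = \T^\omega$, this forces $d(\Phi_\theta(\bT)) = d(\bT)$ in $\Si_n$, because applying $\Phi_\theta$ to the defining equation $\bT_\theta^{\blambda}\, d(\bT) = \bT$ yields $\T^\omega\, d(\bT) = \Phi_\theta(\bT)$. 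Second, I would verify that $\Phi_\theta$ preserves the dominance order: $\Bs \lhd_\theta \bT$ iff $\Phi_\theta(\Bs) \lhd \Phi_\theta(\bT)$. For this, I would extend $\Phi_\theta$ to a strictly order-preserving bijection between the ambient sets $\N \times \{1\} \times \{1,\ldots,l\}$ and $\N$, and observe that, for each $k$, the set of nodes filled by $\{1,\ldots,k\}$ in $\bT$ corresponds bijectively under the extended map to the analogous set for $\Phi_\theta(\bT)$. Because this extension is strictly order-preserving, the counting condition~(\ref{defdominance}) defining $\unlhd_\theta$ translates term by term into the corresponding condition for $\unlhd$ on $\tab(\omega)$.

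With these two compatibilities in place, the theorem follows immediately by chaining equivalences:
\begin{equation*}
d(\Bs) < d(\bT) \iff d(\Phi_\theta(\Bs)) < d(\Phi_\theta(\bT)) \iff \Phi_\theta(\Bs) \lhd \Phi_\theta(\bT) \iff \Bs \lhd_\theta \bT,
\end{equation*}
where the middle equivalence is Theorem~\ref{Teo-Ehr-Original} applied to the $\omega$-tableaux, which are trivially row standard since every row of $\omega$ has length $1$. The main obstacle I anticipate is the order-preservation step: one has to confirm that the counting definition~(\ref{defdominance}) is invariant under transporting the ambient indexing set along a strictly order-preserving bijection, which is conceptually clean but requires some careful bookkeeping. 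The one-column hypothesis on $\blambda$ is essential throughout, since it is precisely what ensures that $[\blambda]$ is totally ordered under $\lhd_\theta$ and hence that the bijection $\Phi_\theta$ exists in the first place.
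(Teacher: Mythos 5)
Your proposal is correct and follows essentially the same route as the paper: transport the problem along the order-preserving bijection $\Phi_\theta\colon \tab(\blambda)\to\tab(\omega)$ from the proof of Lemma \ref{t-lamb-max}, use $\Phi_\theta(\bT_\theta^{\blambda})=\T^{\omega}$ to get $d(\bT)=d(\Phi_\theta(\bT))$ and the compatibility $\Bs \lhd_\theta \bT \iff \Phi_\theta(\Bs)\lhd\Phi_\theta(\bT)$, and then invoke the classical Ehresmann Theorem \ref{Teo-Ehr-Original}. You simply spell out the equivariance and order-preservation checks that the paper leaves implicit.
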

\begin{proof}
  Let again $ \omega $ be the one-column partition $ \omega = (1^n)$ and let 
  $ \Phi_{\theta}:   \tab(\blambda) \rightarrow \tab(\omega) $ be the order preserving
  bijection that was introduced in the proof of Lemma
  \ref{t-lamb-max}.
Recall that in general $ \Phi( \bT^{\blambda}_{\theta}) = \T^{\omega}$. But from this it follows
that for any $ \bT \in \tab(\blambda) $ we have $ d( \bT) = d(\Phi_{\theta}(\bT)) $. On the other hand, we have that
$ \Bs \lhd_{\theta} \bT $ if and only if $ \Phi( \Bs) \lhd \Phi(\bT) $ and so the Theorem follows from
the original Ehresmann Theorem, that is Theorem \ref{Teo-Ehr-Original}.
\end{proof}

Let $ \blambda \in \OnePar$. Then 
we conclude from the Theorem that the order relations $ \lhd_{\theta} $ on $ \tab(\blambda) $ are all isomorphic.
However, the restrictions of the order relations $ \lhd_{\theta} $  to the relevant subsets $ \std(\blambda) $ are not isomorphic.

\medskip
In general $\unlhd_{\theta}$ is not a total order on the set of tableaux, only a partial order.
On the other hand, on the set of tableaux of one-column multipartitions of $ n $  
there is related
{\color{black}{stronger}}
order $ <_{\theta}$ which is a total order. It
is the lexicographical order, defined via 
\begin{equation}\label{lexiorder}
  \bT <_{\theta} \Bs \, \, \mbox{ if there is } 1 \leq k \leq n \mbox{ such that } \bT \! \mid_j   = \Bs \! \mid_j
  \mbox{ for } j < k
\mbox{ but } \bT \! \mid_k   \lhd_{\theta} \, \,  \Bs \! \mid_k.
\end{equation}
It induces 
a total order on one-column multipartitions of $ n $ via
\begin{equation}\label{lexiorderpar}
  \blambda <_{\theta} \bmu \mbox{ iff } \bT_{\theta}^{\blambda} <_{\theta}\bT_{\theta}^{\bmu}.
\end{equation}  

There is an extension of $ <_{\theta} $ to the set of all one-column multipartitions that shall be of importance to us. 
It is given as follows. Let $ \blambda $ and $ \bmu$ be one-column multipartitions of $ m $ and $ n$
and assume that $ m < n $. Then we define 
\begin{equation}\label{lexiorderpar}
  \blambda <_{\theta} \bmu \mbox{ iff } \bT_{\theta}^{\blambda} \le_{\theta}  \bT_{\theta}^{\bmu} \! \mid_m.
\end{equation}  
For example if $ \gamma $ is an addable node for $ \blambda $ and $ \bmu $ is defined via
$ [\bmu] :=  [\blambda] \cup \gamma $ then we always have that $ \blambda <_{\theta} \bmu $. 
In general for $ k< n $ we define 
\begin{equation} \blambda \!\! \mid_k = \shape( \bT_{\theta}^{\blambda} \! \! \mid_k).   \end{equation}
Suppose that $ \blambda $ and $ \bmu $ are multipartitions of $ m $ and $ n $ and that 
$ m < n $. Then by definition $ \blambda   \le_{\theta} \bmu \!\! \mid_m $ iff $ \blambda   <_{\theta} \bmu $.

\medskip
In the following we shall be mostly interested in the orders related to the zero weighting 
and when we write $ \lhd $, $ <$, $ \prec$, $ \bT^{\blambda}$, etc we refer to 
$ \lhd_0 $, $ <_0$, $ \prec_0$, $ \bT^{\blambda}_0$, etc. We shall also mostly be interested in one-column multipartitions and 
therefore 'multipartitions' shall in the following refer to 'one-column multipartitions', unless otherwise stated.

\section{Generalized blob algebras}
In this section we define the family of algebras that we are interested in. Let $ \F $ be a field 
of characteristic $ p $, where $ p $ is either a prime or zero, and suppose that $ q \in \F \setminus \{ 1 \} $ is a primitive $ e$'th root of unity.
(Thus if $  p > 0$ we have $ \mbox{gcd}(e,p) =1 $).
Let $\II:=\mathbb{Z}/e\mathbb{Z}$. Fix a positive integer $ l $. The elements of $\bi = (i_1, \ldots, i_n) $ of $ \II^n $
are called \emph{residue sequences modulo $ e$}, or \emph{simply residue sequences}.
For $ \bi \in (i_1, \ldots, i_n) \in \II^n $ and $ j \in \II$, we define the \emph{concatenation} $ \bi j \in \II^{n+1} $ via
$ \bi j := (i_1, \ldots, i_n, j )$. 
The symmetric group $ \Si_n $ acts on the left on $ \II^n$ via permutation of
the coordinates $ \II^n $, that is  $s_k \cdot \bi   :=(i_1, \ldots,  i_{k+1}, i_k, \ldots, i_n)$.

Let $ \hat{\kappa} = (\hat{\kappa}_1, \ldots, \hat{\kappa}_l ) \in  {\mathbb Z}^l $ where $ l $ is
as before. 
Such a $ \hat{\kappa} $ is 
denoted a \emph{multicharge}. We let $ \kappa_i \in  \II$ be the image of $ \hat{\kappa}_i $ under the natural
projection and define
$ \kappa := (\kappa_1, \ldots , \kappa_l)  \in \II^n$. 
We shall throughout choose a representative for each $ \kappa_i$, also denoted by $ \kappa_i $, 
between $ 0 $ and $ e-1$. 
\begin{definition}\label{strongadj}
We say that $ \hat{\kappa} $ is \emph{strongly adjacency-free} if it satisfies 
\begin{itemize}
\setlength\itemsep{-1.1em}
\item[i)] $\hat{\kappa}_{i+1} - \hat{\kappa}_{i} \geq n $  \\
 \item[ii)] $ \kappa_i-\kappa_j \neq 0, \pm 1  \, \,\mbox{ mod }\,  e   \mbox{ for all } i\neq j $  \\
 \item[iii)] $ \kappa_1 \neq \kappa_l +2 \,\, \mbox{ mod }\,  e$ \\
 \item[iv)] $ \kappa_1 < \kappa_2 < \ldots < \kappa_l.$
\end{itemize}
\end{definition}
We shall in the following always assume that $ \hat{\kappa} $ is strongly adjacency-free; in particular
the inequality $e > 2l$ should always hold.

Our notion of a strongly adjacency-free multicharge is a generalization of 
the notion of an \emph{adjacency-free multicharge}, which was introduced in \cite{LiPl}
although already implicitly present in \cite{MW} and \cite{PlazaRyom}.
The difference between the two notions are the conditions $ iii) $ and $ iv) $ which are omitted in \cite{LiPl}.  
These extra conditions will be useful later on for our analysis of Garnir tableaux.


\medskip

We can now define our main object of study.
\theoremstyle{definition}
\begin{definition}{\label{defiblob}}
  Given integers $e,l,n>1$ and a strongly adjacency-free multicharge $\hat{\kappa} $
  the generalized blob algebra $\BB= \B$ of level $l$ on $n$ strings is the unital, associative
  $\F$-algebra on generators $$\{\psi_1,\dots,\psi_{n-1}\}\cup{\{y_1,\dots,y_n\}}\cup{\{e(\bi) \mid \bi\in{\II^n}\}}$$ subject to the following relations
\begin{equation}\label{orto1}
e(\bi)e(\bj)=\delta_{\bi, {\bj}}e(\bi)
\end{equation}
\begin{equation}\label{eq12}
 e(\bi)=0  \, \, \,  \textrm{if } \,  i_1\not\in{\{\kappa_1,\dots,\kappa_l\}}
\end{equation}
\begin{equation}\label{eq13}
e(\bi)=0 \,\,   \textrm{if} \,\,  i_1 \in{\{\kappa_1,\dots,\kappa_l\}} \, \,  \textrm{and } \, i_2=i_1+1
\end{equation}
\begin{equation}\label{eq13a}
y_1 e(\bi)=0 \,\,   \textrm{if} \,\,  i_1 \in{\{\kappa_1,\dots,\kappa_l\}}
\end{equation}
\begin{equation}\label{sum1}
\sum_{\bi\in{\II^n}}{e(\bi)}=1
\end{equation}
\begin{equation}\label{eq3}
y_re(\bi)=e(\bi)y_r
\end{equation}
\begin{equation}\label{eq4}
\psi_re(\bi )=e( s_k \cdot \bi)\psi_r
\end{equation}
\begin{equation}\label{eq5}
y_ry_s=y_sy_r
\end{equation}
\begin{equation}\label{eq6}
\psi_ry_s=y_s\psi_r\quad\textrm{if}\quad s\neq r,r+1
\end{equation}
\begin{equation}\label{eq7}
\psi_r\psi_s=\psi_s\psi_r\quad\textrm{if}\quad |s-r|>1
\end{equation}
\begin{equation}\label{eq8}
\psi_ry_{r+1}e(\bi)=(y_r\psi_r-\delta_{i_r,i_{r+1}})e(\bi)
\end{equation}
\begin{equation}\label{eq9}
y_{r+1}\psi_re(\bi)=(\psi_ry_r-\delta_{i_r,i_{r+1}})e(\bi)
\end{equation}
\begin{equation}\label{eq10}
\psi_r^2e(\bi)=\left\{\begin{array}{cc}
                          0 & \quad \textrm{if}\quad i_r=i_{r+1}  \\
                          e(\bi) & \quad \textrm{if}\quad i_r\neq i_{r+1},i_{r+1}\pm1  \\
                           (y_{r+1}-y_r)e(\bi)& \quad \textrm{if}\quad i_{r+1}=i_r+1  \\
                           (y_r-y_{r+1})e(\bi)& \quad \textrm{if}\quad i_{r+1}=i_r-1
                        \end{array}\right.
\end{equation}
\begin{equation}\label{eq11}
\psi_r\psi_{r+1}\psi_re(\bi)=\left\{\begin{array}{cc}
                                             (\psi_{r+1}\psi_r\psi_{r+1}-1)e(\bi) & \quad \textrm{if} \quad i_{r+2}=i_r=i_{r+1}-1 \\
                                               (\psi_{r+1}\psi_r\psi_{r+1}+1)e(\bi)& \quad \textrm{if} \quad i_{r+2}=i_r=i_{r+1}+1 \\
                                              (\psi_{r+1}\psi_r\psi_{r+1})e(\bi) & \quad \textrm{otherwise.}
                                           \end{array}\right.
\end{equation}
\end{definition}

The above definition of $ \B $ is the one used in \cite{bowman} and \cite{LiPl}, but it is not the original definition of the 
generalized blob algebra as presented in \cite{MW}. In
the final section of our paper we prove that the two definitions do coincide. 
For the original blob algebra the coincidence of these two definitions was proved in
\cite{PlazaRyom}.

\medskip
Let us take the opportunity to give the precise definition of the KLR-algebra, already mentioned above.
It was introduced independently in \cite{KhovanovLauda} and \cite{Rouq}.
\begin{definition}\label{KLR-algebra}
  The {\color{black}{cyclotomic KLR-algebra of type $ {A}^{(1)}_{e-1}$, or simply
    the KLR-algebra}},
    is the $ \F$-algebra $ \R$ on generators 
$$\{\psi_1,\dots,\psi_{n-1}\}\cup{\{y_1,\dots,y_n\}}\cup{\{e(\bi) \mid \bi\in{\II^n}\}}$$
subject to the same relations as for the blob algebra $ \B $ 
except for relation
(\ref{eq13}) which is omitted.
\end{definition}
  Let $ \pi: \R \rightarrow \B$ be the projection map from the KLR-algebra to $ \B$.
  Then, for simplicity of notation, we shall in general write $ x $ for $ \pi(x)$ when 
 $ x \in \R $.

\medskip
It follows from the relations that there is an antiinvolution $ \ast $ of $ \B$, and of $ \R $, that fixes the generators.

\medskip

There is a diagrammatical way to view this definition which is of importance for our work.
{\color{black}{It was introduced by Khovanov and Lauda in \cite{KhovanovLauda}.}}
A \emph{Khovanov-Lauda diagram} $ D$, or simply a \emph{KL-diagram}, on $n$ strings consists of
$n$ points on each of two parallel edges (the top edge and the bottom edge) and $n$ strings connecting
the points of the top edge with the points
of the bottom edge. Strings may intersect, but triple intersections are not allowed. Each string
may be decorated with a finite number of dots,
but dots cannot be located on the intersection of two strings. Finally, each string is labelled with an element of $\II$.
This defines two residue sequences $t(D),b(D)  \in \II^n$ associated with the diagram $D$
obtained by reading the residues of the extreme points from left to right.
{\color{black}{For the details concerning this definition, the reader should consult \cite{KhovanovLauda}.}}

\begin{example}
Let $e=4$ and $n=6$. Let $D$ be the following KL-diagram:
$$
\begin{tikzpicture}[xscale=0.3,yscale=0.3]
\draw[thick] (0,0) to [out=90, in=270](3,5);
\draw[thick] (1,0) to [out=45,in=270] (5,5);
\draw[thick] (2,0) to [out=45,in=270] (0,3);
\draw[thick] (0,3) to [out=90,in=270] (2,5);
\draw[thick] (3,0) to [out=90,in=270] (0,5);
\draw[thick] (4,0) to [out=15,in=270] (4,5);
\draw[thick] (5,3) to [out=90,in=270] (1,5);
\draw[thick] (5,0) to [out=170,in=270] (5,3);
\node[below] at (0,0) {0};
\node[below] at (1,0) {3};
\node[below] at (2,0) {0};
\node[below] at (3,0) {2};
\node[below] at (4,0) {2};
\node[below] at (5,0) {1};
\draw[fill] (1,3) circle [radius=0.1];
\draw[fill] (2,2) circle [radius=0.1];
\draw[fill] (3,4) circle [radius=0.1];
\node at (6,0) {\color{black}{.}};
\end{tikzpicture}
$$
In this case the bottom sequence is $b(D)=(0,3,0,2,2,1)$ and the top sequence is $t(D)=(2,1,0,0,2,3).$
\end{example}

We can now define the diagrammatic algebra $\BB^{diag}= \B^{diag}$. As an $ \F$-vector space it consists
of the $\F$-linear
combinations of KL-diagrams on $n$ strings modulo planar isotopy and modulo the following relations:

\begin{equation}\label{mal-inicio}
\begin{tikzpicture}[xscale=0.5,yscale=0.5]
  \draw[thick](0,0)--(0,2);
  \draw[thick](1,0)--(1,2);
  \node at(2.5,1) {$\dots$};
  \draw[thick](4,0)--(4,2);
  \node[below] at (0,0) {$i_1$};
  \node[below] at (1,0) {$i_2$};
  \node[below] at (4,0) {$i_n$};
  \node at(5,1){$\quad=\quad0$};
  \node at(10,1){if $i_1\not\in{\{\kappa_1,\dots,\kappa_l\}}${\color{black}}};
\end{tikzpicture}
\end{equation}

\begin{equation}\label{otro-mal-inicio}
\begin{tikzpicture}[xscale=0.5,yscale=0.5]
  \draw[thick](0,0)--(0,2);
  \draw[thick](1,0)--(1,2);
  \node at(2.5,1) {$\dots$};
  \draw[thick](4,0)--(4,2);
  \node[below] at (0,0) {$i_1$};
  \node[below] at (1,0) {$i_2$};
  \node[below] at (4,0) {$i_n$};
  \node at(5,1){$\quad=\quad0$};
  \node at(12,1){if $i_1\in{\{\kappa_1,\dots,\kappa_l\}}$ and $ i_2 = i_1 +1${\color{black}}};
\end{tikzpicture}
\end{equation}

\begin{equation}\label{dot-al-inicio}
\begin{tikzpicture}[xscale=0.5,yscale=0.5]
  \draw[thick](0,0)--(0,2);
  \draw[thick](1,0)--(1,2);
  \draw[thick](4,0)--(4,2);
  \draw[fill] (0,1) circle [radius=0.1];
  \node[below] at (0,0) {$i_1$};
  \node[below] at (1,0) {$i_2$};
  \node[below] at (4,0) {$i_n$};
  \node at (5,1) {$\quad=\quad0$};
  \node at (2.5,1) {$\dots$};
  \node at(10,1){if $i_1 \in{\{\kappa_1,\dots,\kappa_l\}} $};
\end{tikzpicture}
\end{equation}

\begin{equation}\label{punto-arriba}
\begin{tikzpicture}[xscale=0.5,yscale=0.5]
\draw[thick] (0,0)--(2,2);
\draw[thick] (0,2)--(2,0);
\draw[fill] (1.5,0.5) circle [radius=0.1];
\draw[thick] (3,0)--(5,2);
\draw[thick] (3,2)--(5,0);
\draw[fill] (3.5,1.5) circle [radius=0.1];
\draw[thick] (7,0)--(7,2);
\draw[thick] (8,0)--(8,2);
\node at (2.5,1) {=};
\node at (6,1) {$-\delta_{ij}$\quad};
\node[below] at (0,0) {$i$};
\node[below] at (2,0) {$j$};
\node[below] at (3,0) {$i$};
\node[below] at (5,0) {$j$};
\node[below] at (7,0) {$i$};
\node[below] at (8,0) {$j$};
\end{tikzpicture}
\end{equation}

\begin{equation}\label{punto-abajo}
\begin{tikzpicture}[xscale=0.5,yscale=0.5]
\draw[thick] (0,0)--(2,2);
\draw[thick] (0,2)--(2,0);
\draw[fill] (1.5,1.5) circle [radius=0.1];
\draw[thick] (3,0)--(5,2);
\draw[thick] (3,2)--(5,0);
\draw[fill] (3.5,0.5) circle [radius=0.1];
\draw[thick] (7,0)--(7,2);
\draw[thick] (8,0)--(8,2);
\node at (2.5,1) {=};
\node at (6,1) {$-\delta_{ij}$\quad};
\node[below] at (0,0) {$i$};
\node[below] at (2,0) {$j$};
\node[below] at (3,0) {$i$};
\node[below] at (5,0) {$j$};
\node[below] at (7,0) {$i$};
\node[below] at (8,0) {$j$};
\end{tikzpicture}
\end{equation}
where $ \delta_{ij} $ is {\color{black}the} Kronecker delta. {\color{black} Moreover}
\begin{equation}\label{cruce-pasa}
\begin{tikzpicture}[xscale=0.5,yscale=0.5]
\draw[thick] (0,0) to [out=45,in=270](2,2);
\draw[thick] (2,0) to [out=90,in=315](0,2);
\draw[thick] (1,0) to [out=135,in=225] (1,2);
\draw[thick] (3,0) to [out=90,in=225](5,2);
\draw[thick] (5,0) to [out=135,in=270](3,2);
\draw[thick] (4,0) to [out=45,in=315](4,2);
\draw[thick] (7,0)--(7,2);
\draw[thick] (8,0)--(8,2);
\draw[thick] (9,0)--(9,2);
\node at (2.5,1) {=};
\node at (6,1) {$\quad+\alpha\quad$\quad};
\node[below] at (0,0) {$i$};
\node[below] at (1,0) {$j$};
\node[below] at (2,0) {$k$};
\node[below] at (3,0) {$i$};
\node[below] at (4,0) {$j$};
\node[below] at (5,0) {$k$};
\node[below] at (7,0) {$i$};
\node[below] at (8,0) {$j$};
\node[below] at (9,0) {$k$};
\end{tikzpicture}
\end{equation}
where $$\alpha=\left\{\begin{array}{cc}
                       -1 & \quad\textrm{if}\quad i=k=j-1 \\
                       1 & \quad\textrm{if}\quad i=k=j+1 \\
                       0 & \quad\textrm{otherwise}
                     \end{array}\right.$$

\begin{equation}\label{lazo}
\begin{tikzpicture}[xscale=0.5,yscale=0.5]
\draw[thick](0,0)to[out=90,in=270](1,1);
\draw[thick](1,1)to[out=90,in=270](0,2);
\draw[thick](1,0)to[out=90,in=270](0,1);
\draw[thick](0,1)to[out=90,in=270](1,2);
\node[below] at (0,0) {$i$};
\node[below] at (1,0) {$j$};
\node at (2,1){$=\beta$\quad};
\draw[thick] (3,0)--(3,2);
\draw[thick] (4,0)--(4,2);
\node[below] at (3,0) {$i$};
\node[below] at (4,0) {$j$};
\node at (5,1) {$+\gamma$\quad};
\draw[thick] (6,0)--(6,2);
\draw[thick] (7,0)--(7,2);
\node[below] at (6,0) {$i$};
\node[below] at (7,0) {$j$};
\draw[fill] (7,1) circle [radius=0.1];
\node at (8,1) {$-\gamma$\quad};
\draw[thick] (9,0)--(9,2);
\draw[thick] (10,0)--(10,2);
\node[below] at (9,0) {$i$};
\node[below] at (10,0) {$j$};
\draw[fill] (9,1) circle [radius=0.1];
\end{tikzpicture}
\end{equation}
where $$\beta=\left\{\begin{array}{cc}
                       1 & \quad\textrm{if}\quad |i-j|>1 \\
                       0& \quad\textrm{otherwise}
                     \end{array}\right.$$
and $$\gamma=\left\{\begin{array}{cc}
                      1 & \quad\textrm{if}\quad j=i+1 \\
                      -1 & \quad\textrm{if}\quad j=i-1 \\
                      0 &\quad\textrm{otherwise{\color{black}.}}
                    \end{array}\right.$$

The identity element $ 1 $ of $ \B^{diag}$ is the sum over all diagrams
$$\begin{tikzpicture}[xscale=0.5,yscale=0.5]
  \draw[thick](0,0)--(0,2);
  \draw[thick](1,0)--(1,2);
  \node at(2,1) {$\dots$};
  \draw[thick](3,0)--(3,2);
  \node[below] at (0,0) {$i_1$};
  \node[below] at (1,0) {$i_2$};
  \node[below] at (3,0) {$i_n$};
\end{tikzpicture}
$$
such that $ \bi := (i_1, i_2, \ldots, i_n ) $ belongs to $ \II^n$.

\medskip
The multiplication $DD'$ between two diagrams $D$ and $D'$ in $\B^{diag}$ is defined by vertical concatenation with $D$ above $D'$ if $b(D)=t(D')$.
If $b(D)\neq t(D')$ the product is defined to be zero.
We extend the product to all pairs of elements in $ \B^{diag}$ by linearity.

\medskip
The $ \F $-linear map from $ \B $ to $ \B^{diag} $ given by
\begin{equation}
\begin{tikzpicture}[xscale=0.5,yscale=0.5]
  \draw[thick](0,0)--(0,2);
  \draw[thick](1,0)--(1,2);
  \node at(2,1) {$\dots$};
  \node at (-1,1){$\mapsto$};
  \node at (-2,1){$e(\bi)$};
  \draw[thick](3,0)--(3,2);
  \node[below] at (0,0) {$i_1$};
  \node[below] at (1,0) {$i_2$};
  \node[below] at (3,0) {$i_n$};
  \node[below] at (3.3,1) {$,$};
\end{tikzpicture}
\begin{tikzpicture}[xscale=0.5,yscale=0.5]
  \draw[thick](0,0)--(0,2);
  \draw[thick](1.8,0)--(1.8,2);
  \draw[fill] (1.8,1) circle [radius=0.1];
  \node at(0.9,1) {$\dots$};
  \node at(2.8,1) {$\dots$};
  \node at(-1,1){$\mapsto$};
  \node at(-2.5,1){$y_re(\bi)$};
  \draw[thick](3.5,0)--(3.5,2);
  \node[below] at (0,0) {$i_1$};
  \node[below] at (1.8,0) {$i_r$};
  \node[below] at (3.5,0) {$i_n$};
  \node[below] at (3.8,1) {$,$};
\end{tikzpicture}
\begin{tikzpicture}[xscale=0.5,yscale=0.5]
  \draw[thick](1,0)--(1,2);
  \draw[thick](3,0)--(4,2);
  \draw[thick](3,2)--(4,0);
   \node at(2,1) {$\dots$};
  \node at(4.8,1) {$\dots$};
  \node at(0,1){$\mapsto$};
  \node at(-1.5,1){$\psi_re(\bi)$};
  \draw[thick](6,0)--(6,2);
  \node[below] at (1,0) {$i_1$};
  \node[below] at (3,0) {$i_r$};
  \node[below] at (4,0) {$i_{r+1}$};
  \node[below] at (6,0) {$i_n$};

\end{tikzpicture}
\end{equation}
defines an isomorphism between $\B$ and $\B^{diag}$.
In view of this, we shall write $\B^{diag}= \B$.

\medskip
We next show some useful relations that can be derived directly from the definitions.

\begin{lemma}\label{cruce-por-dot}
In $ \B$ we have:

\begin{center}
\begin{tikzpicture}[xscale=0.5,yscale=0.5]
  \draw[thick](0,0)--(1,2);
  \draw[thick](1,0)--(0,2);
  \node[below]at (0,0) {$i$};
  \node[below]at (1,0) {$i$};
  \node[below]at (3,0) {$i$};
  \node[below]at (4,0) {$i$};
  \node at (2,1){$\quad=\quad$};
  \draw[thick](3,0)to[out=90,in=270](4,1);
  \draw[thick](4,1)to[out=90,in=270](3,2);
  \draw[thick](4,0)to[out=90,in=270](3,1);
  \draw[thick](3,1)to[out=90,in=270](4,2);
  \draw[fill] (3,1) circle [radius=0.1];
  \node[] at (4.5,0) {$\color{black}.$};
\end{tikzpicture}
\end{center}
\end{lemma}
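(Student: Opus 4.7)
The identity to be proved is the algebraic statement $\psi_r e(\bi) = \psi_r y_r \psi_r e(\bi)$ for any residue sequence $\bi$ with $i_r = i_{r+1} = i$: the left-hand diagram is a single crossing between two equally-colored strands, and the right-hand diagram is a ``digon'' (two crossings bulging out and back) with a dot on the left strand at middle height, which corresponds to $\psi_r y_r \psi_r e(\bi)$ read from bottom to top.

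The plan is to derive this purely from the defining relations. First I would invoke relation (\ref{eq9}),
\begin{equation*}
y_{r+1}\psi_r e(\bi) \;=\; \bigl(\psi_r y_r - \delta_{i_r,i_{r+1}}\bigr) e(\bi),
\end{equation*}
which under the hypothesis $i_r = i_{r+1}$ rearranges to
\begin{equation*}
\psi_r y_r e(\bi) \;=\; y_{r+1} \psi_r e(\bi) + e(\bi).
\end{equation*}

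Next I would right-multiply this equation by $\psi_r$:
\begin{equation*}
\psi_r y_r e(\bi) \psi_r \;=\; y_{r+1} \psi_r e(\bi) \psi_r + e(\bi)\psi_r.
\end{equation*}
Since $s_r\cdot\bi = \bi$ (the residues at positions $r,r+1$ agree), relation (\ref{eq4}) gives $e(\bi)\psi_r = \psi_r e(\bi)$, so the right-hand side collapses to $y_{r+1}\psi_r^2 e(\bi) + \psi_r e(\bi)$, while the left-hand side is $\psi_r y_r \psi_r e(\bi)$ after commuting $e(\bi)$ past $y_r$ using (\ref{eq3}) and past $\psi_r$ as above. By relation (\ref{eq10}), the case $i_r = i_{r+1}$ gives $\psi_r^2 e(\bi) = 0$, so $y_{r+1}\psi_r^2 e(\bi) = 0$, leaving
\begin{equation*}
\psi_r y_r \psi_r e(\bi) \;=\; \psi_r e(\bi),
\end{equation*}
which is exactly the claimed identity.

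There is essentially no obstacle here: the argument is a two-line manipulation using (\ref{eq9}), (\ref{eq4}), (\ref{eq3}), and the vanishing of $\psi_r^2 e(\bi)$ from (\ref{eq10}). The only point requiring care is the bookkeeping of the idempotent, namely the observation that $e(\bi)$ can be moved across $\psi_r$ thanks to $s_r\cdot\bi = \bi$; this is what makes the digon identity diagrammatically sensible in the first place.
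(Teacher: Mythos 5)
Your proof is correct: reading the right-hand diagram as $\psi_r y_r \psi_r e(\bi)$ with $i_r=i_{r+1}=i$, your manipulation using (\ref{eq9}), the idempotent bookkeeping via (\ref{eq3}) and (\ref{eq4}), and the vanishing $\psi_r^2 e(\bi)=0$ from (\ref{eq10}) yields exactly $\psi_r y_r \psi_r e(\bi)=\psi_r e(\bi)$. This is essentially the same argument as the paper's one-line proof, which cites the diagrammatic counterparts (\ref{punto-arriba}), (\ref{punto-abajo}) and (\ref{lazo}) of precisely these relations.
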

\begin{proof}
This is an immediate consequence of relations (\ref{punto-arriba}), (\ref{punto-abajo}) and (\ref{lazo}).
\end{proof}

\begin{lemma}\label{doblei}
In $\B$ we have:

\begin{center}
\begin{tikzpicture}[xscale=0.5,yscale=0.5]
  \draw[thick](0,0)--(0,2);
  \draw[thick](1,0)--(1,2);
  \node[below]at (0,-0.3) {$i$};
  \node[below]at (1,-0.3) {$i$};
  \node[below]at (3,-0.3) {$i$};
  \node[below]at (4,-0.3) {$i$};
  \node[below]at (6,-0.3) {$i$};
  \node[below]at (7,-0.3) {$i$};
  \node at (2,1){$\quad=\quad$};
  \draw[thick](3,0)to[out=90,in=270](4,1);
  \draw[thick](4,1)to[out=90,in=270](3,2);
  \draw[thick](3,2)--(3,2.3);
  \draw[thick](4,2)--(4,2.3);
  \draw[thick](0,2)--(0,2.3);
  \draw[thick](1,2)--(1,2.3);
  \draw[thick](6,2)--(6,2.3);
  \draw[thick](7,2)--(7,2.3);
  \draw[thick](0,0)--(0,-0.3);
  \draw[thick](1,0)--(1,-0.3);
  \draw[thick](3,0)--(3,-0.3);
  \draw[thick](4,0)--(4,-0.3);
  \draw[thick](6,0)--(6,-0.3);
  \draw[thick](7,0)--(7,-0.3);
  \draw[thick](4,0)to[out=90,in=270](3,1);
  \draw[thick](3,1)to[out=90,in=270](4,2);
  \draw[fill] (3,1) circle [radius=0.1];
  \draw[fill] (3,2) circle [radius=0.1];
  \node at (5,1){$\quad-\quad$};
  \draw[thick](6,0)to[out=90,in=270](7,1);
  \draw[thick](7,1)to[out=90,in=270](6,2);
  \draw[thick](7,0)to[out=90,in=270](6,1);
  \draw[thick](6,1)to[out=90,in=270](7,2);
  \draw[fill] (6,1) circle [radius=0.1];
  \draw[fill] (7,0) circle [radius=0.1];
  \node[] at (7.5,0) {$\color{black}.$};  
\end{tikzpicture}
\end{center}
\end{lemma}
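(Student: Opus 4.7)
The plan is to recognise both diagrams on the right-hand side as containing, as a subdiagram, the right-hand side of Lemma \ref{cruce-por-dot}, namely a double crossing of two $i$-strands decorated by a single dot on the left strand between the two crossings. I would begin by reading off the algebraic content of each right-hand side term: the first diagram translates to $y_r \psi_r y_r \psi_r e(\bi)$ and the second to $\psi_r y_r \psi_r y_{r+1} e(\bi)$, where $\bi$ is any residue sequence with $i_r = i_{r+1} = i$.

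Next, I would invoke Lemma \ref{cruce-por-dot}, which identifies $\psi_r y_r \psi_r e(\bi)$ with the plain crossing $\psi_r e(\bi)$. Applied to the inner subdiagram of each right-hand side term, this reduces the first term to $y_r \psi_r e(\bi)$ (a single crossing with a dot on top of the left strand) and the second term to $\psi_r y_{r+1} e(\bi)$ (a single crossing with a dot on the bottom of the right strand); for the second term one must also slide $y_{r+1}$ past $e(\bi)$ using relation \eqref{eq3}.

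At this point the desired identity reduces to
\begin{equation*}
 e(\bi) \; = \; y_r \psi_r e(\bi) \; - \; \psi_r y_{r+1} e(\bi),
\end{equation*}
which is precisely relation \eqref{eq8} in the case $i_r = i_{r+1}$, rearranged so that the term $e(\bi)$ coming from the Kronecker delta is isolated. The main obstacle, modest as it is, lies in the correct diagrammatic bookkeeping of which strand carries each dot after the two crossings; since the strands return to their original positions in a double crossing, this presents no genuine difficulty, so the proof is essentially a one-line reduction once the algebraic translation has been made explicit.
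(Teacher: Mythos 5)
Your proposal is correct and follows essentially the same route as the paper: the paper's own proof is a one-line appeal to the dot–crossing relations (\ref{punto-arriba}), (\ref{punto-abajo}) and Lemma \ref{cruce-por-dot}, which is exactly your reduction of each right-hand diagram via Lemma \ref{cruce-por-dot} followed by relation (\ref{eq8}) in the case $i_r=i_{r+1}$. Your algebraic translations $y_r\psi_r y_r\psi_r e(\bi)$ and $\psi_r y_r\psi_r y_{r+1}e(\bi)$ of the two diagrams, and the final rearrangement, are all accurate.
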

\begin{proof}
This is a consequence of relations (\ref{punto-arriba}), (\ref{punto-abajo}) and Lemma \ref{cruce-por-dot}.
\end{proof}

\begin{lemma}\label{dot-pasa}
If $|i-j|>1$ then we have

\begin{center}
\begin{tikzpicture}[xscale=0.5,yscale=0.5]
  \draw[thick](0,0)--(0,2);
  \draw[thick](1,0)--(1,2);
  \draw[fill] (1,1) circle [radius=0.1];
  \node[below]at (0,0) {$j$};
  \node[below]at (1,0) {$i$};
  \node[below]at (3,0) {$j$};
  \node[below]at (4,0) {$i$};
  \node at (2,1){$\quad=\quad$};
  \draw[thick](3,0)to[out=90,in=270](4,1);
  \draw[thick](4,1)to[out=90,in=270](3,2);
  \draw[thick](4,0)to[out=90,in=270](3,1);
  \draw[thick](3,1)to[out=90,in=270](4,2);
  \draw[fill] (3,1) circle [radius=0.1];
  \node[] at (4.5,0) {$\color{black}.$};
\end{tikzpicture}
\end{center}
\end{lemma}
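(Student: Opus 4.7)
The plan is to interpret both sides of the claimed equation as elements of $\B$ and then derive the identity directly from the defining relations. Reading diagrams from bottom to top, fix a residue sequence $\bi$ with $i_r=j$ and $i_{r+1}=i$. The left-hand side is simply $y_{r+1}e(\bi)$. For the right-hand side, observe that the dot in the middle of the double crossing sits on the strand that originates at the bottom-right (and so carries label $i$) and which, just above the first crossing, occupies position $r$; hence the right-hand side equals $\psi_r y_r\psi_r e(\bi)$.

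The key move is then to push the inner $y_r$ past the inner $\psi_r$ using relation (\ref{eq8}). Since $|i-j|>1$ we have in particular $i_r\neq i_{r+1}$, so the Kronecker term vanishes and $y_r\psi_r e(\bi)=\psi_r y_{r+1}e(\bi)$. Substituting turns the right-hand side into $\psi_r^2 y_{r+1}e(\bi)$. Commuting $y_{r+1}$ past $e(\bi)$ by relation (\ref{eq3}) and then applying the second case of (\ref{eq10})---whose hypothesis $i_{r+1}\neq i_r, i_r\pm 1$ is precisely ensured by $|i-j|>1$---gives $\psi_r^2 e(\bi)=e(\bi)$. The expression therefore collapses to $y_{r+1}e(\bi)$, which is the left-hand side.

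I do not expect any serious obstacle. The only subtle point is the initial diagram-to-algebra dictionary: one must correctly identify which strand carries the middle dot, so as to write the right-hand side as $\psi_r y_r\psi_r e(\bi)$ rather than $\psi_r y_{r+1}\psi_r e(\bi)$. Once that translation is made, the proof is an immediate application of the three relations (\ref{eq3}), (\ref{eq8}) and (\ref{eq10}), in the same spirit as Lemmas \ref{cruce-por-dot} and \ref{doblei}.
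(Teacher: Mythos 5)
Your proof is correct and is essentially the paper's argument: the paper simply cites the dot-crossing and double-crossing relations (\ref{punto-arriba}), (\ref{punto-abajo}), (\ref{lazo}), which are the diagrammatic forms of (\ref{eq8})/(\ref{eq9}) and (\ref{eq10}) that you use. You have merely written out the same computation algebraically, with the correct diagram-to-algebra translation $\psi_r y_r \psi_r e(\bi)$ for the right-hand side.
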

\begin{proof}
This is a direct consequence of the relations (\ref{punto-arriba}), (\ref{punto-abajo}) and (\ref{lazo}).
\end{proof}

\begin{lemma}\label{dot-salta}
If $|i-j|=1$ then we have
\begin{center}
\begin{tikzpicture}[xscale=0.5,yscale=0.5]
  \draw[thick](0,0)--(0,2);
  \draw[thick](1,0)--(1,2);
  \draw[fill] (1,1) circle [radius=0.1];
  \node[below]at (0,0) {$j$};
  \node[below]at (1,0) {$i$};
  \node[below]at (3,0) {$j$};
  \node[below]at (4,0) {$i$};
  \node[below]at (6,0) {$j$};
  \node[below]at (7,0) {$i$};
  \node at (2,1){$\quad=\quad$};
  \node at (5,1){$\quad\pm\quad$};
  \draw[thick](6,0)to[out=90,in=270](7,1);
  \draw[thick](7,1)to[out=90,in=270](6,2);
  \draw[thick](7,0)to[out=90,in=270](6,1);
  \draw[thick](6,1)to[out=90,in=270](7,2);
  \draw[thick](3,0)--(3,2);
  \draw[thick](4,0)--(4,2);
  \draw[fill] (3,1) circle [radius=0.1];
\end{tikzpicture}
\end{center}
where the positive sign appears when $j=i-1$ and the negative sign when $j=i+1.$
\end{lemma}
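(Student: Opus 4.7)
The statement is the diagrammatic assertion that, for $|i-j|=1$, the configurations
\[
y_{r+1}e(\bi) \quad\text{and}\quad y_r e(\bi) \pm \psi_r^2 e(\bi)
\]
agree, where $\bi$ has $j$ and $i$ in positions $r$ and $r+1$. So the natural approach is to obtain this as a direct rearrangement of one of the defining relations, namely the ``double crossing'' relation (\ref{lazo}), or equivalently the quadratic relation (\ref{eq10}) on $\psi_r^2 e(\bi)$.

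My plan is as follows. First, I will read off relation (\ref{lazo}) with the labels of its left and right strings renamed from $(i,j)$ to $(j,i)$, so that it matches the labelling of the lemma. In this renamed form the parameter $\beta$ appearing in (\ref{lazo}) becomes $0$ because $|i-j|=1$, and the coefficient $\gamma$ becomes $+1$ when $i=j+1$ (i.e.\ $j=i-1$) and $-1$ when $i=j-1$ (i.e.\ $j=i+1$). Thus (\ref{lazo}) reduces, in these two cases respectively, to the identities
\[
\text{(double crossing)} = (\text{dot on right string on }i) - (\text{dot on left string on }j)
\]
and
\[
\text{(double crossing)} = -(\text{dot on right string on }i) + (\text{dot on left string on }j).
\]
Solving each of these for the diagram with a dot on the right-hand string yields exactly the two claimed identities, with the positive sign in the case $j=i-1$ and the negative sign in the case $j=i+1$.

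Since the whole argument is a one-line rearrangement of an already established defining relation, there is no real obstacle: the only thing to watch is the labelling convention, because in (\ref{lazo}) the left string carries the label $i$ while in Lemma~\ref{dot-salta} it carries the label $j$, so one must be careful to track which coefficient $\gamma$ applies. Alternatively, one can avoid the diagrammatics entirely and quote the algebraic form (\ref{eq10}): when $i_{r+1}=i_r+1$ one has $\psi_r^2 e(\bi) = (y_{r+1}-y_r)e(\bi)$, so $y_{r+1}e(\bi) = y_r e(\bi) + \psi_r^2 e(\bi)$; when $i_{r+1}=i_r-1$ one has $\psi_r^2 e(\bi) = (y_r-y_{r+1})e(\bi)$, so $y_{r+1}e(\bi) = y_r e(\bi) - \psi_r^2 e(\bi)$. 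Translating these two identities back into Khovanov--Lauda pictures gives the lemma directly, with the stated sign convention.
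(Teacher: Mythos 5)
Your proposal is correct and follows the same route as the paper, which proves the lemma as a direct consequence of the double-crossing relation (\ref{lazo}) (equivalently, the quadratic relation (\ref{eq10})). The sign bookkeeping after relabelling the strings is handled correctly, so nothing further is needed.
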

\begin{proof}
This is a direct consequence of relation (\ref{lazo}).
\end{proof}

\begin{lemma}\label{trio}
If $j=i+1$ then we have 
\begin{center}
\begin{tikzpicture}[xscale=0.5,yscale=0.5]
  \draw[thick](0,0)--(0,2);
  \draw[thick](1,0)--(1,2);
  \draw[thick](2,0)--(2,2);
  \node[below] at (0,0) {$i$};
  \node[below] at (1,0) {$j$};
  \node[below] at (2,0) {$i$};
  \node at (3,1) {$\quad=\quad$};
  \draw[thick](4,0)to[out=90,in=270](5,1);
  \draw[thick](5,0)to[out=90,in=270](6,1);
  \draw[thick](6,1)to[out=90,in=270](5,2);
  \draw[thick](5,1)to[out=90,in=270](4,2);
  \draw[thick](6,0)to[out=90,in=270](4,1);
  \draw[thick](4,1)to[out=90,in=270](6,2);
  \draw[fill] (4,1) circle [radius=0.1];
  \node[below] at (4,0) {$i$};
  \node[below] at (5,0) {$j$};
  \node[below] at (6,0) {$i$};
  \node at (7,1) {$\quad-\quad$};
  \draw[thick](8,0)to[out=45,in=270](10,1);
  \draw[thick](10,1)--(10,2);
  \draw[thick](10,0)to[out=135,in=270](9,1);
  \draw[thick](9,1)to[out=90,in=315](8,2);
  \draw[thick](9,0)to[out=135,in=270](8,1);
  \draw[thick](8,1)to[out=90,in=270](9,2);
  \node[below] at (8,0) {$i$};
  \node[below] at (9,0) {$j$};
  \node[below] at (10,0) {$i$};
\end{tikzpicture}
\end{center}
and if $j=i-1$ then we have that 
\begin{center}
\begin{tikzpicture}[xscale=0.5,yscale=0.5]
  \draw[thick](0,0)--(0,2);
  \draw[thick](1,0)--(1,2);
  \draw[thick](2,0)--(2,2);
  \node[below] at (0,0) {$i$};
  \node[below] at (1,0) {$j$};
  \node[below] at (2,0) {$i$};
  \node at (3,1) {$\quad=-\quad$};
  \draw[thick](4,0)to[out=90,in=270](5,1);
  \draw[thick](5,0)to[out=90,in=270](6,1);
  \draw[thick](6,1)to[out=90,in=270](5,2);
  \draw[thick](5,1)to[out=90,in=270](4,2);
  \draw[thick](6,0)to[out=90,in=270](4,1);
  \draw[thick](4,1)to[out=90,in=270](6,2);
  \draw[fill] (4,1) circle [radius=0.1];
  \node[below] at (4,0) {$i$};
  \node[below] at (5,0) {$j$};
  \node[below] at (6,0) {$i$};
  \node at (7,1) {$\quad+\quad$};
  \draw[thick](8,0)to[out=45,in=270](10,1);
  \draw[thick](10,1)--(10,2);
  \draw[thick](10,0)to[out=135,in=270](9,1);
  \draw[thick](9,1)to[out=90,in=315](8,2);
  \draw[thick](9,0)to[out=135,in=270](8,1);
  \draw[thick](8,1)to[out=90,in=270](9,2);
  \node[below] at (8,0) {$i$};
  \node[below] at (9,0) {$j$};
  \node[below] at (10,0) {$i$};
  \node[] at (10.5,0) {$\color{black}.$};
\end{tikzpicture}
\end{center}
\end{lemma}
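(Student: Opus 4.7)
The plan is to prove both equations as direct consequences of the braid-like relation (\ref{eq11}) after translating each diagram into an element of the KLR algebra $\R$ (equivalently of $\B$).

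Write $\bi$ for a residue sequence with $i_r = i_{r+2} = i$ and $i_{r+1} = j$. The left-hand side is the idempotent $e(\bi)$ restricted to the three relevant strands. The second right-hand side diagram, composed of three successive simple crossings realising the long element of $\Si_3$ acting on strands $r, r+1, r+2$, translates to $\psi_r \psi_{r+1} \psi_r\, e(\bi)$. The first right-hand side diagram, read from bottom to top with its dot at middle position $r$, corresponds to the element $\psi_{r+1} \psi_r\, y_r\, \psi_r \psi_{r+1}\, e(\bi)$.

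Next I would simplify this last expression using the defining relations. After $\psi_r \psi_{r+1}$ is applied to $e(\bi)$, relation (\ref{eq4}) shows that the top residues at positions $r, r+1, r+2$ become $(i, i, j)$. Applying (\ref{eq9}) to this new idempotent---whose residues at $r$ and $r+1$ now coincide, so the Kronecker delta contributes $+1$---and then invoking (\ref{eq10}) to annihilate $\psi_r^2\, e((i,i,j))$, one obtains $\psi_r y_r \psi_r \psi_{r+1}\, e(\bi) = \psi_r \psi_{r+1}\, e(\bi)$. Multiplying on the left by $\psi_{r+1}\psi_r$ shows that the first right-hand side diagram equals $\psi_{r+1} \psi_r \psi_{r+1}\, e(\bi)$.

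Both equations then follow by a one-line application of (\ref{eq11}). When $j = i+1$, the hypothesis $i_{r+2} = i_r = i_{r+1} - 1$ holds, so (\ref{eq11}) yields $\psi_r \psi_{r+1} \psi_r\, e(\bi) = (\psi_{r+1} \psi_r \psi_{r+1} - 1)\, e(\bi)$, which rearranges to $e(\bi) = \psi_{r+1}\psi_r\psi_{r+1}\, e(\bi) - \psi_r\psi_{r+1}\psi_r\, e(\bi)$, the claimed first equation. When $j = i-1$, the dual hypothesis $i_{r+2} = i_r = i_{r+1} + 1$ holds; relation (\ref{eq11}) produces the analogous identity with the sign of the correction flipped, and rearrangement yields $e(\bi) = \psi_r\psi_{r+1}\psi_r\, e(\bi) - \psi_{r+1}\psi_r\psi_{r+1}\, e(\bi)$, the claimed second equation.

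The main obstacle is the translation of the first right-hand side diagram (with its four crossings and dot) into the concrete algebraic element $\psi_{r+1}\psi_r y_r \psi_r\psi_{r+1}\, e(\bi)$, together with the careful bookkeeping of the idempotent labels needed to apply (\ref{eq9}) and (\ref{eq10}) at the correct position. Once this translation is in place, the identity is an immediate consequence of (\ref{eq11}).
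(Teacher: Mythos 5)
Your proof is correct and takes essentially the same route as the paper, which deduces the lemma directly from the braid relation (\ref{cruce-pasa}) (the diagrammatic form of (\ref{eq11})) together with Lemma \ref{cruce-por-dot}, whose content $\psi_r y_r\psi_r e(\bi)=\psi_r e(\bi)$ when $i_r=i_{r+1}$ you simply re-derive inline from (\ref{eq9}) and (\ref{eq10}). One small slip: to pass from $\psi_r y_r\psi_r\psi_{r+1}e(\bi)=\psi_r\psi_{r+1}e(\bi)$ to the identification of the first right-hand diagram with $\psi_{r+1}\psi_r\psi_{r+1}e(\bi)$ you should multiply on the left by $\psi_{r+1}$ only, not by $\psi_{r+1}\psi_r$.
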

\begin{proof}
This is a direct consequence of relation (\ref{cruce-pasa}) and Lemma \ref{cruce-por-dot}.
\end{proof}

\begin{lemma}\label{concatenation}
  { \color{black} Let $ n \ge 2 $ and let $ \iota_{n+1}$  be the
    concatenation on the right of a diagram in $ \B $ with a through line
  of fixed residue $ \iota $, as indicated in the following figure 
$$
\begin{tikzpicture}[xscale=0.3,yscale=0.3]
\draw[thick] (0,0) to [out=90, in=270](3,5);
\draw[thick] (1,0) to [out=45,in=270] (5,5);
\draw[thick] (2,0) to [out=45,in=270] (0,3);
\draw[thick] (0,3) to [out=90,in=270] (2,5);
\draw[thick] (3,0) to [out=90,in=270] (0,5);
\draw[thick] (4,0) to [out=15,in=270] (4,5);
\draw[thick] (5,3) to [out=90,in=270] (1,5);
\draw[thick] (5,0) to [out=170,in=270] (5,3);
\node[below] at (0,0) {$ i_1$};
\node[below] at (1,0) {$ i_2$};
\node[below] at (2,0) {$ i_3$};
\node[below] at (3,0) {$ i_4$};
\node[below] at (4,0) {$ i_5$};
\node[below] at (5,0) {$ i_6$};
\draw[fill] (1,3) circle [radius=0.1];
\draw[fill] (2,2) circle [radius=0.1];
\draw[fill] (3,4) circle [radius=0.1];

\node at (7,2) {$ \mapsto$};

\draw[thick] (9+0,0) to [out=90, in=270](9+3,5);
\draw[thick] (9+1,0) to [out=45,in=270] (9+5,5);
\draw[thick] (9+2,0) to [out=45,in=270] (9+0,3);
\draw[thick] (9+0,3) to [out=90,in=270] (9+2,5);
\draw[thick] (9+3,0) to [out=90,in=270] (9+0,5);
\draw[thick] (9+4,0) to [out=15,in=270] (9+4,5);
\draw[thick] (9+5,3) to [out=90,in=270] (9+1,5);
\draw[thick] (9+5,0) to [out=170,in=270] (9+5,3);

\draw[fill] (9+1,3) circle [radius=0.1];
\draw[fill] (9+2,2) circle [radius=0.1];
\draw[fill] (9+3,4) circle [radius=0.1];

\draw[thick, blue] (9+6,0)--(9+6,5);
\node[below] at (9+6,0){$i_{7}$};

\node[below] at (9+0,0) {$ i_1$};
\node[below] at (9+1,0) {$ i_2$};
\node[below] at (9+2,0) {$ i_3$};
\node[below] at (9+3,0) {$ i_4$};
\node[below] at (9+4,0) {$ i_5$};
\node[below] at (9+5,0) {$ i_6$};

\node at (16,0){\color{black}.};

\end{tikzpicture}
$$
Then $  \iota_{n+1}$ induces a (non-unital) algebra homomorphism 
$  \iota_{n+1}:   \B \rightarrow \BBB$.
It satisfies $ \iota_{n+1}(0) = 0$.}
\end{lemma}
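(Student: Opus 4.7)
The plan is to verify (i) well-definedness of $\iota_{n+1}$ as an $\F$-linear map $\B \to \BBB$, (ii) multiplicativity, and (iii) non-unitality, with $\iota_{n+1}(0) = 0$ being immediate from $\F$-linearity. Working diagrammatically, I define $\iota_{n+1}$ on a KL-diagram by the rule in the statement---append an undecorated through line at position $n+1$ labelled by $\iota$---and extend $\F$-linearly; on generators this amounts to $e(\bi) \mapsto e(\bi\iota)$, $y_r e(\bi) \mapsto y_r e(\bi\iota)$ for $1 \le r \le n$, and $\psi_r e(\bi) \mapsto \psi_r e(\bi\iota)$ for $1 \le r \le n-1$.

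For (i), I check each defining relation (\ref{mal-inicio})--(\ref{lazo}) of the diagrammatic presentation. The local relations (\ref{punto-arriba})--(\ref{lazo}) concern only a bounded region around two or three adjacent strings labelled by residues $i, j, k$; the new rightmost string lies strictly outside that region and carries a fixed label $\iota$ decoupled from $i, j, k$, so the identical relations already hold in $\BBB$ and the corresponding identities on $\iota_{n+1}(D)$ follow. The boundary relations (\ref{mal-inicio}), (\ref{otro-mal-inicio}), (\ref{dot-al-inicio}) depend only on $i_1$ and $i_2$, and $\iota_{n+1}$ sends $\bi = (i_1, \ldots, i_n)$ to $\bi\iota = (i_1, \ldots, i_n, \iota)$, leaving $i_1, i_2$ unchanged; hence the vanishing hypotheses still apply in $\BBB$ and the relations descend.

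For (ii), I use the diagrammatic description of multiplication: $D_1 D_2$ is the vertical concatenation of $D_1$ above $D_2$ when $b(D_1) = t(D_2)$ and $0$ otherwise. Since the appended through line has matching top and bottom residue $\iota$, we have $t(\iota_{n+1}(D_i)) = t(D_i)\iota$ and $b(\iota_{n+1}(D_i)) = b(D_i)\iota$, so the compatibility condition for $\iota_{n+1}(D_1) \iota_{n+1}(D_2)$ is equivalent to that for $D_1 D_2$; when it holds, vertical stacking fuses the two appended lines into a single straight through line of residue $\iota$ on the right of $D_1 D_2$, producing exactly $\iota_{n+1}(D_1 D_2)$; otherwise both products vanish. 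Bilinear extension completes multiplicativity. The map is non-unital because $\iota_{n+1}(1_\B) = \sum_{\bi \in \II^n} e(\bi\iota)$ is a proper subsum of $1_{\BBB} = \sum_{\bj \in \II^{n+1}} e(\bj)$. The sole delicate point of the argument is (i) for the boundary relations (\ref{mal-inicio})--(\ref{dot-al-inicio}), which constrain the left end of diagrams; their preservation is precisely the reason why the added through line must be attached at position $n+1$ rather than at position $1$.
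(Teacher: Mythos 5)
Your proposal is correct and follows essentially the same route as the paper, which simply observes that each defining relation (\ref{mal-inicio})--(\ref{lazo}) of $\B$ maps under $\iota_{n+1}$ to a relation of $\BBB$, so the map is well-defined, with the remaining points being routine. Your extra care with the boundary relations (\ref{mal-inicio})--(\ref{dot-al-inicio}) and with multiplicativity of vertical stacking just spells out what the paper leaves implicit.
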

\begin{proof}
  Each of the relations (\ref{mal-inicio}) to (\ref{lazo}) for $ \B$ maps under $ \iota_{n+1}$
  to a relation for $ \BBB$ and so 
$ \iota_{n+1} $ is well-defined. The second statement of the Lemma is obvious.
\end{proof}
We shall use the notation $  b \cdot \iota$ or $  b \, \iota$ for $\iota_{n+1}(b) $.
We remark that it can be shown that $ \iota_{n+1} $ is an embedding.

\section{A generating set $ \Basis$ for $ \B$.}
We now take the first steps towards the construction of our cellular basis for $ \B$.
\medskip

Let $ \blambda $ be a multipartition and let $ \gamma = (r,c,m)$ be a node of
$ [\blambda] $. 
Then 
we define the residue of $ \gamma $ via
\begin{equation}\label{thenwedefinetheresidue}
\textrm{res}(\gamma):=\kappa_m+c-r \in \II.
\end{equation}
Recall that a multipartition $ \blambda $ is assumed to be a one-column multipartition, unless otherwise stated. The nodes $ \gamma $ of a multipartition 
$ \blambda$ are of the form 
$ \gamma = (r,1,m)$ with residue
$ \textrm{res}(\gamma) =\kappa_m+1-r$.

Any $\blambda$-tableau $ \bT $ gives rise to a residue sequence $ \bi^{\bT} \in \II^n$ defined via
\begin{equation}
 \bi^{\bT} := (i_1, \ldots, i_n) \in \II^n  \, \mbox{ where} \,  i_j = \textrm{res}(\bT(j)).
\end{equation}

In the next couple of Lemmas and Corollaries we aim at showing that only the idempotents $ e( \bi^{\blambda}) $, with $ \blambda $ running over 
multipartitions, are needed in order to generate $ \B$.
Our proof for this is not straightforward and relies on several induction loops, all related to $ \blambda$. 
In essence our proofs are a chain of applications of the Lemmas \ref{cruce-por-dot} to \ref{concatenation} and 
could therefore have been formulated completely diagrammatically, in principle, but we choose to encode these 
Lemmas in an symbolic notation that we explain shortly. This symbolic notation has the advantage of enabling 
us to keep track of the induction parameter 
$ \blambda$. Our approach is therefore different from the approaches of  \cite{W}, \cite{bowman} that 
rely on manipulations of the diagrams themselves. Our proofs are rather comparable to the proofs of \cite{GLi}
and, in view of this, maybe surprisingly short, after all.

\medskip

Let $ \bmu_n^{max} = \color{black}{\bmu^{max}}$ be the multipartition introduced in (\ref{lambda-max}), which is
the unique maximal multipartition of $ n $  with respect to $\lhd$, and let us denote by $ \bT_n^{max}=
\bT^{max} $
the unique maximal $ \bmu_n^{max}$-tableau, as in Lemma \ref{t-lamb-max}.
We denote by $ \bi_n^{max} = \bi^{max} \in \II^n$ the corresponding residue sequence and by 
 $ e(\bi^{max}) \in \B$ the associated idempotent. We denote by $ [ \textrm{res} ( \bT^{max})]$
the corresponding residue diagram, obtained by writing $ \textrm{res}( \bT^{max}(k)) $
in the node $ \bT^{max}(k) $ of $ [\blambda]$.
For example, for $ n = 22, e = 10 $ and $ \kappa =( 0,2,4,7) $ we have the following residue diagram
\begin{equation}{\label{exampleMax}}
[ \textrm{res} ( \bT^{max})] =
  \left(\, \gyoung(;0,;9,;8,;7,;6,;5),\gyoung(;2,;1,;0,;9,;8,;7),
  \gyoung(;4,;3,;2,;<1>,;0,:), \gyoung(;7,;6,;5,;<4>,;3,:) \, \right)
\end{equation}
which gives rise to {\color{black}{the}} following residue sequence
\begin{equation} \bi^{max}= (0,2,4,7, 9,1,3,6,8,0,2,5,7,9,1,4,6,8,0,3,5,7) \in I_{10}^{22} \end{equation}
and corresponding idempotent
\begin{equation}\label{idempotentdiagramOLD}
\begin{tikzpicture}[xscale=0.5,yscale=0.5]
  \node at (-2.0,1.5) {$e(\bi^{max})=$};
  \draw[](0,0)--(0,3);
  \draw[](1,0)--(1,3);
  \draw[](2,0)--(2,3);
  \draw[](3,0)--(3,3);
  \draw[](4,0)--(4,3);
  \draw[](5,0)--(5,3);
  \draw[](6,0)--(6,3);
  \draw[](7,0)--(7,3);
  \draw[](8,0)--(8,3);
  \draw[](9,0)--(9,3);
  \draw[](10,0)--(10,3);
    \draw[](11,0)--(11,3);
    \draw[](12,0)--(12,3);
    \draw[](13,0)--(13,3);
    \draw[](14,0)--(14,3);
    \draw[](15,0)--(15,3);
    \draw[](16,0)--(16,3);
    \draw[](17,0)--(17,3);
    \draw[](18,0)--(18,3);
    \draw[](19,0)--(19,3);
    \draw[](20,0)--(20,3);
    \draw[](21,0)--(21,3);

  \node[below] at (0,0) {$0$};
  \node[below] at (1,0) {$2$};
  \node[below] at (2,0) {$4$};
  \node[below] at (3,0) {$7$};

  \node[below] at (4,0) {$9$};
  \node[below] at (5,0) {$1$};
  \node[below] at (6,0) {$3$};
  \node[below] at (7,0) {$6$};

  \node[below] at (8,0) {$8$};
  \node[below] at (9,0) {$0$};
  \node[below] at (10,0) {$2$};
  \node[below] at (11,0) {$5$};

  \node[below] at (12,0) {$7$};
  \node[below] at (13,0) {$9$};
  \node[below] at (14,0) {$1$};
  \node[below] at (15,0) {$4$};

  \node[below] at (16,0) {$6$};
  \node[below] at (17,0) {$8$};
  \node[below] at (18,0) {$0$};
  \node[below] at (19,0) {$3$};

  \node[below] at (20,0) {$5$};
  \node[below] at (21,0) {$7$};
  \node[] at (21.5,0) {$.$};
  \end{tikzpicture}
\end{equation}

We now introduce our symbolic notation. {\color{black}{Firstly}} we represent 
an idempotent like (\ref{idempotentdiagramOLD}) in the following way 
\begin{equation}\label{anidempotent}
e( \bi^{max}):=
(0,2,4,7 \mid  9,1,3,6 \mid 8, 0,2{\color{black},} 5 \mid 7, 1, 9, 4 \mid 6, 8, 0,3 \mid 5, 7 ) 
\end{equation}
where the separation lines $ \mid $ indicate jumps from a row to the next in $ \bmu^{max}$
(although the
separation lines are not always meant to have an exact meaning, but rather to be a help for the eye).
{\color{black}{Secondly}}
we introduce the following dot notation for expressions like $ y_{19} e( \bi^{max}) $ 
\begin{equation}
y_{19}e( \bi^{max}):=
(0,2,4,7 \mid  9,1,3,6 \mid 8, 0,2{\color{black},} 5 \mid 7, 1, 9, 4 \mid 6, 8, \overset{\bullet}{0},3 \mid 5, 7 ).
\end{equation}

{\color{black}
  \medskip
For any $ a \in \B $ we denote by $ \langle a \rangle $ the two-sided ideal in $ \B $ generated by $ a $.  
When $ a,b \in \B $ and $ b \in \langle a \rangle $ we say that \emph{$b$ factorizes over $a$}. 

\medskip
We write $ \bi \overset{k}{\sim} \bj $ if $ \bi = s_k \bj $ where $ i_k \neq i_{k+1}  \pm 1$
and we let $ \sim $ be the equivalence relation on $ \II^l  $ generated by all the $ \overset{k}{\sim}$'s. 
If $ \bi \overset{k}\sim \bj $ we say that $ \bi $ is obtained from $ \bj $ by \emph{freely moving} the string
of residue $ i_{k+1} $ past the string of residue $ i_k $. We shall often use this concept as follows. Suppose that    
$ \bi \sim \bj $. Then we have both $ e(\bi) \in \langle e(\bj) \rangle $ and $ e(\bj) \in \langle  e(\bi) \rangle $,
that is $ e(\bi)$ factorizes over $ e(\bj)$ and vice versa.
Indeed, if $ \bi \overset{k}{\sim} \bj $ then by relation (\ref{lazo})
we have that $ e(\bi) = \psi_k e(\bj) \psi_k $ as well as 
$ e(\bj) = \psi_k e(\bi) \psi_k $, from which the general case follows.
In particular,  we have in this situation that $ e(\bi) = 0 $ if and only if $ e(\bj) = 0 $. 
The same way one sees that if $ \bi \sim \bj $ where $ \bi  =w \bj $ for $w \in  \Si_n $, then 
for all $ r $ we have $ y_r e(\bi) \in \langle y_s e(\bj) \rangle $ and $ y_s e(\bj) \in \langle  y_r e(\bi) \rangle $
where $ s = w \cdot r $.

\medskip
If $ \bi \sim \bj $ we shall also write $ e( \bi ) \sim e( \bj )$ and $ y_r e( \bi ) \sim y_s e( \bj )$ where
$ r $ and $ s $ are related as before. When using the 
symbolic notation as in (\ref{anidempotent}) we associate with $  \sim $ a similar meaning.}

}

\medskip
We aim at proving that $ y_k e(\bi^{max}) = 0 $ for all $ k =1, \ldots, n $. This is straightforward for small $ k $, but gets 
more complicated when $ k $ grows. Let us illustrate the argument on a few small values of $ k $, using the above example (\ref{idempotentdiagramOLD}).

\medskip
For $ k= 1 $ we must show that
\begin{equation}
y_{1}e( \bi^{max})=
(\overset{\bullet}{0},2,4,7 \mid  9,1,3,6 \mid 8, 0,2{\color{black},} 5 \mid 7, 1, 9, 4 \mid 6, 8, {0},3 \mid 5, 7 )
\end{equation}
is equal to zero; this is however an instance of relation (\ref{dot-al-inicio}). For $ k=2 $ we must show that
\begin{equation}
({0},\overset{\bullet}{2},4,7 \mid  9,1,3,6 \mid 8, 0,2{\color{black},} 5 \mid 7, 1, 9, 4 \mid 6, 8, {0},3 \mid 5, 7 ) =0.
\end{equation}
Here we may move $2$ freely past $0$ and so
\begin{equation}
  ({0},\overset{\bullet}{2},4,7 \mid  \ldots  \mid 6, 8, {0},3 \mid 5, 7 )  \sim
  ({\color{black}{\overset{\bullet}{2},0}}, 4,7 \mid  \ldots \mid 6, 8, {0},3 \mid 5, 7 )   =0
\end{equation}
where the last equality follows from (\ref{dot-al-inicio}), once again.
The same
kind of argument shows that $ y_{3}e( \bi^{max})= y_{4}e( \bi^{max})=0$.
For these small values of $ k $, one can formulate these arguments diagrammatically. 
Here is the case $ k =4$:
\begin{equation}\label{idempotentdiagram}
\begin{array}{l}
 \color{black}{y_4e(  \bi^{max})   =  \psi_3  \psi_2 \psi_1
   \big(y_1e( s_1 s_2 s_3 \bi^{max}) \big) \psi_1  \psi_2 \psi_3=} \\  \\
 \begin{tikzpicture}[xscale=0.5,yscale=0.5]
  \draw[](0,0)--(0,3);
  \draw[](1,0)--(1,3);
  \draw[](2,0)--(2,3);
  \draw[fill] (-1,1.5) circle [radius=0.1];
    \draw[] (3,0) to [out=90,in=-90] (-1,1.5);
  \draw[] (-1,1.5) to [out=90,in=-90] (3,3);
  \draw[](4,0)--(4,3);
  \draw[](5,0)--(5,3);
  \draw[](6,0)--(6,3);
  \draw[](7,0)--(7,3);
  \draw[](8,0)--(8,3);
  \draw[](9,0)--(9,3);
  \draw[](10,0)--(10,3);
    \draw[](11,0)--(11,3);
    \draw[](12,0)--(12,3);
    \draw[](13,0)--(13,3);
    \draw[](14,0)--(14,3);
    \draw[](15,0)--(15,3);
    \draw[](16,0)--(16,3);
    \draw[](17,0)--(17,3);
    \draw[](18,0)--(18,3);
    \draw[](19,0)--(19,3);
    \draw[](20,0)--(20,3);
    \draw[](21,0)--(21,3);
  \node[below] at (0,0) {$0$};
  \node[below] at (1,0) {$2$};
  \node[below] at (2,0) {$4$};
  \node[below] at (3,0) {$7$};
  \node[below] at (4,0) {$9$};
  \node[below] at (5,0) {$1$};
  \node[below] at (6,0) {$3$};
  \node[below] at (7,0) {$6$};
  \node[below] at (8,0) {$8$};
  \node[below] at (9,0) {$0$};
  \node[below] at (10,0) {$2$};
  \node[below] at (11,0) {$5$};
  \node[below] at (12,0) {$7$};
  \node[below] at (13,0) {$9$};
  \node[below] at (14,0) {$1$};
  \node[below] at (15,0) {$4$};
  \node[below] at (16,0) {$6$};
  \node[below] at (17,0) {$8$};
  \node[below] at (18,0) {$0$};
  \node[below] at (19,0) {$3$};
  \node[below] at (20,0) {$5$};
  \node[below] at (21,0) {$7$};
  \node[below] at (22,1.8) {$=0$};
  \end{tikzpicture}  
\end{array}
\end{equation}
    {\color{black}where the last equality follows from the fact that
      $  y_1e( s_1 s_2 s_3 \bi^{max})  $, that is the middle part of the
      diagram (\ref{idempotentdiagram}), is equal to zero.}

 Let us now go on showing that $ y_k e(\bi^{max}) = 0 $ for $ k = 5,6,7,8$ corresponding to the second row
of the residue diagram $ [ {\rm res}(\bT^{max})]$. For $ k  = 5 $ we 
must show that
\begin{equation}\label{kfive}
y_{5}e( \bi^{max})=
({0},2,4,7 \mid  \overset{\bullet}{9},1,3,6 \mid 8, 0,2, 5 \mid 7, 1, 9, 4 \mid 6, 8, {0},3 \mid 5, 7 )=0.
\end{equation}
But $ \overset{\bullet}{9} $ moves freely past $ 7,4,2$ and so we have  
\begin{equation}
({0},2,4,7 \mid  \overset{\bullet}{9},1,3,6 \mid \ldots  \mid \ldots \mid 5, 7 ) \sim 
({0},\overset{\bullet}{9},{\color{black}{2}},4,7 \mid  1,3,6 \mid \ldots  \mid \ldots  \mid 5, 7 ) 
\end{equation}
which we must show to be zero. But using Lemma \ref{dot-salta} we have that 
\begin{equation}
  ({0},\overset{\bullet}{9},{\color{black}{2}}, 4,7 \mid  \ldots  \mid 5, 7 ) \in {\color{black}{\big \langle}}
  (\overset{\bullet}{0},{9},{\color{black}{2}},4,7 \mid  \ldots  \mid 5, 7 )  \, {\color{black}{, }} \, 
  ({9},0,{\color{black}{2}},4,7 \mid  \ldots   \mid 5, 7 ) {\color{black}{\big \rangle}} 
\end{equation}
{\color{black}{where $ \langle \cdot \rangle $ once again denotes ideal generation.}}
Here the first {\color{black}{ideal generator}} is zero by relation (\ref{dot-al-inicio}) whereas the second
{\color{black}{ideal generator}} is zero by 
relation (\ref{otro-mal-inicio}). The other cases $ k = 6,7,8 $ are treated essentially the same way.

Let us now consider the cases where $ k $ {\color{black}{corresponds}} to the third row of $ [ {\rm res}(\bT^{max})]$,
that is we show that $ y_{k}e( \bi^{max})=0 $ for $ k= 9,10,11,12$. For $ k = 9 $ we must show that 
\begin{equation}
y_{9}e( \bi^{max})=
({0},2,4,7 \mid  {9},1,3,6 \mid \overset{\bullet}{8}, 0,2, 5 \mid 7, 1, 9, 4 \mid 6, 8, {0},3 \mid 5, 7 )=0.
\end{equation}
But $ \overset{\bullet}{8} $ moves freely past 
$ 6,3 $ and $1$ and so we have 
\begin{equation}
y_{9}e( \bi^{max}) \sim
({0},2,4,7 \mid  {9},\overset{\bullet}{8},1,3,6 \mid  0,2, 5 \mid 7, 1, 9, 4 \mid 6, 8, {0},3 \mid 5, 7 )
\end{equation}
which we must show to be zero. But by Lemma \ref{dot-salta}
we have that 
\begin{equation}
  ({0},2,4,7 \mid  {9},\overset{\bullet}{8},1,3,6 \mid  \ldots  \mid 5, 7 ) \, {\color{black}{\in}} \, 
{\color{black}{\big \langle}}  ({0},2,4,7 \mid  \overset{\bullet}{9},{8},1,3,6 \mid  \ldots  \mid 5, 7 ) \,  , \, 
  ({0},2,4,7 \mid  {8},9,1,3,6 \mid  \ldots  \mid 5, 7 ) {\color{black}{\big \rangle}}.
\end{equation}
Here the first {\color{black}{generator}} is zero by (\ref{kfive}) and for
the second {\color{black}{generator}} we have that
\begin{equation}
  ({0},2,4,7 \mid  {8},9,1,3,6 \mid  \ldots  \mid 5, 7 )  \sim
  (7, 8,{0},2,4 \mid  9,1,3,6 \mid  \ldots  \mid 5, 7 )  
\end{equation}
which is zero by relation (\ref{otro-mal-inicio}). The other cases $ k = 10,11,12$ are treated similarly.
For $ k $ {\color{black}{corresponding}} to the next block, the inductive argument becomes more complicated
and we prefer to present it as part of the proof of the general 
statement $ y_k e(\bi^{max})=0 $.

\begin{lemma}\label{base-de-induccion}
In $ \B$ we have for all $1\leq k\leq n $ the following relations
\begin{equation}\label{claim1}y_k e(\bi^{max})=0= e(\bi^{max})y_k. \end{equation}
\end{lemma}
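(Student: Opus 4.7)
My plan is to induct on $k$. The base case $k=1$ is immediate: $\bT^{max}(1) = (1,1,1)$, so $i_1 = \kappa_1 \in \{\kappa_1, \ldots, \kappa_l\}$, and relation (\ref{eq13a}) yields $y_1 e(\bi^{max}) = 0$.

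For the inductive step I fix $k > 1$ and let $(\rho, 1, m)$ be the node with $\bT^{max}(k) = (\rho, 1, m)$, so that $i_k = \kappa_m - (\rho - 1) \bmod e$. The idea is to slide the dot on string $k$ leftward in two phases. In \emph{Phase 1} I use clause (ii) of Definition \ref{strongadj}, namely $\kappa_{m_1} - \kappa_{m_2} \not\equiv 0, \pm 1 \pmod{e}$ for $m_1 \neq m_2$, to conclude that the residues occurring in row $\rho$ are pairwise $\sim$-commutable; hence the dotted string $\overset{\bullet}{i_k}$ at position $k$ can be slid leftward via $\sim$-moves to the first position $k_0$ of row $\rho$. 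Using the observation from the preamble that $\sim$-equivalent elements generate each other's two-sided ideals, the task reduces to showing $y_{k_0} e(\bi') = 0$ for some $\bi' \sim \bi^{max}$. If $\rho = 1$ then $k_0 = 1$ and (\ref{eq13a}) finishes the argument. Otherwise, in \emph{Phase 2}, I continue sliding the dot leftward into row $\rho - 1$, using free $\sim$-moves where the residue difference is $\not\equiv 0, \pm 1 \pmod{e}$, and invoking Lemma \ref{dot-salta} at each encounter with an adjacent residue. Each such application splits the current element as (a) a \emph{displaced-dot term} whose dot rides on a string untouched by Phase 1, at some position $k'' < k$, which therefore lies in $\langle y_{k''} e(\bi^{max}) \rangle$ and vanishes by the inductive hypothesis, plus (b) a \emph{dotless crossing term} of the shape $\pm \psi_{k''-1} e(\bi'')$.

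The dotless crossing terms are the crux. My plan is to dispose of each by further $\sim$-rearrangement of $\bi''$ until one of the null relations (\ref{eq12}) or (\ref{eq13}) applies at position $1$. The $k=9$ computation displayed above is the prototype: the crossing sequence $(0, 2, 4, 7, 8, 9, 1, 3, 6, 0, 2, 5, \ldots)$ is brought by sliding $7$ and then $8$ past $4, 2, 0$ to $(7, 8, 0, 2, 4, 9, 1, 3, 6, 0, 2, 5, \ldots)$, whereupon (\ref{eq13}) fires at the leftmost pair. Clauses (iii) $\kappa_1 \neq \kappa_l + 2 \pmod{e}$ and (iv) $\kappa_1 < \kappa_2 < \ldots < \kappa_l$ of Definition \ref{strongadj}, combined with the bound $e > 2l$ implied by (i), are what I expect to be needed to guarantee that a pair of the form $(\kappa_j, \kappa_j + 1)$ can always be exposed at positions $1$ and $2$ by a finite sequence of $\sim$-moves, independent of the specific row $\rho$. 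Finally, once $y_k e(\bi^{max}) = 0$ has been established, the symmetric identity $e(\bi^{max}) y_k = 0$ falls out by applying the antiinvolution $\ast$ of $\B$.

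The main obstacle I anticipate is controlling the dotless crossing terms when $\rho$ is large, because differences of the form $\kappa_m - \kappa_j \equiv 2 \pmod e$ can force adjacency encounters beyond the single $j=m$ collision. A single Lemma \ref{dot-salta} application may no longer suffice there; the $\bi''$ produced may fail to admit an immediate $\sim$-path to a null configuration, so one must iterate the reduction, possibly invoking Lemma \ref{trio} or relation (\ref{lazo}) to reintroduce dots and so re-access the inductive hypothesis. Proving that this cascade always terminates at configurations annihilated by (\ref{eq12})--(\ref{eq13a}), uniformly in $(\rho, m)$ rather than by ad hoc case analysis, is the central combinatorial difficulty of the proof.
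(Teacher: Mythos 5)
Your overall strategy (slide the dot leftwards, split off dot-bearing terms by an induction, and reduce the remaining dotless terms to the null relations at the left edge) is the same as the paper's, and your treatment of the \emph{first} displaced-dot term is sound: before any non-free move has been made the ambient idempotent is still $\sim$-equivalent to $e(\bi^{max})$, the dot rides on the string occupying position $k-l$ of $\bi^{max}$, and induction on $k$ kills it. Your use of $\ast$ for the identity $e(\bi^{max})y_k=0$ is also fine (the paper simply invokes relation (\ref{eq3})).

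However, the proposal has a genuine gap, and it is exactly the point you yourself flag as "the central combinatorial difficulty": the dotless crossing terms for $k$ in the $m$'th row with $m\geq 3$ are not disposed of. Your plan --- expose a pair $(\kappa_j,\kappa_j+1)$ at positions $1,2$ by $\sim$-moves alone so that (\ref{eq12}) or (\ref{eq13}) fires --- cannot work in general, because free moves only pass residues differing by at least $2$, and after the first swap of $A$ past $A+1$ the residue $A$ necessarily meets $A-1$ on its way left; this is precisely why the paper sets up an iterated cascade: bring the second node of residue $A$ next to the obstacle to form the triple $A(A-1)A$, apply Lemma \ref{trio}, and repeat with $A-1$ meeting $A-2$, and so on, the interchanges descending row by row until they land in the first two rows where (\ref{mal-inicio}) and (\ref{otro-mal-inicio}) give zero. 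Moreover, the dotted terms produced \emph{inside} this cascade (the terms $\overset{\bullet}{A}A(A-1)$, etc.) live over idempotents that are no longer $\sim$-equivalent to $e(\bi^{max})$, because a genuine adjacent-residue swap has already been performed; so your induction on $k$, which relies on referring such terms back to $\langle y_{k''}e(\bi^{max})\rangle$ via the $\sim$-relation, no longer applies to them. The paper resolves this with a second induction, on $n$: at each such stage the prefix of the sequence up to the dotted position is the maximal residue sequence $\bi^{max}_{n_1}$ for some $n_1<n$, so the dotted term vanishes by the inductive hypothesis on $n$ together with Lemma \ref{concatenation} (concatenation sends zero to zero). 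Without this extra induction (or an equivalent device) and without a termination argument for the cascade, your proof of the inductive step is incomplete for all rows $m\geq 3$.
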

\begin{proof}
By (\ref{eq3}) we know that $ y_k $ and $ e(\bi^{max})$ commute and so we only need to prove the first relation.
 

\medskip
We prove it by induction on $ n $. For $ n=1 $ it is trivial. We next prove it for
a fixed $ n $, assuming that it holds for $ n_1 <  n $. For this fixed $ n$, we use induction on 
$ k $. 

\medskip
The basis step for this induction is $ 1 \le k \le l$, which is however easily handled using the same arguments 
as in the above example (\ref{anidempotent}) 
and the case $ l+1 \leq k \leq 2l $ where
{\color{black}{$ k $}}
belongs to the second
row of $ \bmu^{max}$ can also be treated this way. 
\medskip
Let us now consider the case
{\color{black}{$(m-1)l+1 \leq k \leq ml $}}
where $ m\geq 3$. 
Since {\color{black}{$(m-1)l+1 \leq k \leq ml $}}
we have that ${\color{black} k} $ belongs to the {\color{black}{$ m$'th}}
row of $ [\bmu^{max}]$. 
Suppose that $ \kappa_1^j, \ldots, \kappa_{l}^{j} $ are the residues of the $ j$'th row of  
$ [ {\rm res}(\bT^{max})]$
and that the residue of $ \bT^{max}(k) $ is $ A $. Then we must show
that
\begin{equation}\label{mustshowthat}
y_ke( \bi^{max})=
(\ldots \mid \kappa^{m-1}_1, \ldots ,A+1, \ldots, \kappa^{m-1}_l 
\mid  \kappa^{m}_1, \ldots ,\overset{\, \,  \bullet}{A}, \ldots, \kappa^{m}_l   \mid \ldots \, )  =0.
\end{equation}
Here $ A+1 $ is the residue of the node on top of $ \bT^{max}(k) $ and so we can move $  \overset{\, \,  \bullet}{A}$ 
freely over the residues between them. Hence (\ref{mustshowthat}) is equivalent to 
\begin{equation}\label{mustshowthat}
(\ldots \mid \kappa^{m-1}_1, \ldots ,A+1, \overset{\, \,  \bullet}{A},  \ldots, \kappa^{m-1}_l 
\mid  \kappa^{m}_1, \ldots ,\widehat{A}, \ldots, \kappa^{m}_l  \mid \ldots \, )  =0
\end{equation}
which by Lemma \ref{dot-salta} is equivalent to {\color{black}{the ideal}}
\begin{equation}{\label{sequence2}}
\begin{array}{l}
{\color{black}{\big \langle}} (\ldots \mid \kappa^{m-1}_1, \ldots ,\overset{\, \,  \bullet}{(A+1)}, {A},  \ldots, \kappa^{m-1}_l 
\mid  \kappa^{m}_1, \ldots ,\widehat{A}, \ldots, \kappa^{m}_l  \mid \ldots \, ),  \\
(\ldots \mid \kappa^{m-1}_1, \ldots ,A, {A+1},  \ldots, \kappa^{m-1}_l 
\mid  \kappa^{m}_1, \ldots ,\widehat{A}, \ldots, \kappa^{m}_l  \mid \ldots \, ) {\color{black}{\big \rangle}}
\end{array}
\end{equation}
{\color{black}{being zero}}.
Here the first {\color{black}{ideal generator}} is zero by induction since 
\begin{equation}
(\ldots \mid \kappa^{m-1}_1, \ldots ,\overset{\, \,  \bullet}{(A+1)} ) =0 \end{equation}
by the inductive hypothesis 
on $ n$: this is the residue sequence of a $ \bT^{max}_{n_1} $ where $ n_1 < n $.
Here we also used that concatenation maps zero to zero by Lemma \ref{concatenation}.
We therefore focus on the second {\color{black}{ideal generator}} of ({\ref{sequence2}}), that is 
\begin{equation}\label{eqdia}
(\ldots \mid \kappa^{m-1}_1, \ldots ,A, {A+1},  \ldots, \kappa^{m-1}_l 
\mid  \kappa^{m}_1, \ldots ,\widehat{A}, \ldots, \kappa^{m}_l  \mid \ldots \, )
\end{equation}
which is obtained from the original sequence $ e( \bi^{max})$ by moving $ A$ past $ A+1 $. We have that 
$ y_k e(\bi^{max}) =0$ if and only if this sequence (\ref{eqdia}) is zero.
In (\ref{eqdia}) 
we now move $ A $ further to the left until it hits its first obstacle which will be $ A-1$: this is so due the combinatorial structure of $ [ \bT^{max}]$ and
strong adjacency-freeness of $ \hat{\kappa}$.
On top of the node of residue $ A $ there is a node of residue $ A-1$ that can be freely moved to the right 
until it stands next to $ A$. Doing this we find that 
(\ref{eqdia}) is zero if
\begin{equation}\label{eqdiaI}
(\ldots A(A-1)A \ldots  \mid \kappa^{m-1}_1, \ldots ,\widehat{A}, {A+1},  \ldots, \kappa^{m-1}_l 
\mid  \kappa^{m}_1, \ldots ,\widehat{A}, \ldots, \kappa^{m}_l  \mid \ldots \, ) 
\end{equation}
is zero. We now apply 
Lemma \ref{trio} to the triple $ A(A-1)A$ and get that (\ref{eqdiaI}) is zero if {\color{black}{the ideal }}
\begin{equation}{\label{sequence5}}
\begin{array}{l} {\color{black}{\big \langle}}
(\ldots \overset{\, \,  \bullet}{A}A(A-1) \ldots  \mid \kappa^{m-1}_1, \ldots ,\widehat{A}, {A+1},  \ldots, \kappa^{m-1}_l 
\mid  \kappa^{m}_1, \ldots ,\widehat{A}, \ldots, \kappa^{m}_l  \mid \ldots \, ) {\color{black}{, }}
 \\
(\ldots (A-1)AA \ldots  \mid \kappa^{m-1}_1, \ldots ,\widehat{A}, {A+1},  \ldots, \kappa^{m-1}_l 
\mid  \kappa^{m}_1, \ldots ,\widehat{A}, \ldots, \kappa^{m}_l  \mid \ldots \, ) {\color{black}{\big \rangle}}
\end{array}
\end{equation}
is zero.
As before, by induction on $ n $ the first {\color{black}{generator}} is here equal to zero
and so $ y_k e(\bi^{max}) = 0 $ if and only if the second term of
({\ref{sequence5}}) is zero. We now go on the same way, moving $ A-1 $ to the left, until it hits a residue $ A-2$
and as before $ y_k e(\bi^{max}) = 0 $ if the interchanging of those nodes produces a diagram which is zero.
Continuing in this way, the interchanging of nodes will finally take place in the first two rows of $ [\bmu^{max}]$, where by relations (\ref{mal-inicio}) and
(\ref{otro-mal-inicio}) it does produce zero.
\end{proof}

We have the following consequence of the Lemma. 
\begin{corollary}\label{cor-base-de-induccion}
  Suppose that $ \iota \in \II $ and that the concatenation $ \bi_n^{max} \iota $ is not of the form
  $ \bi^{\blambda} $ for $ \blambda $ any multipartition of $n+1$. Then we have that
\begin{equation}\label{claim} e(\bi_n^{max} \iota)=0. \end{equation}
\end{corollary}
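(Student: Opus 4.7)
My plan is to adapt the symbolic manipulation strategy used in the proof of Lemma \ref{base-de-induccion}. First I would identify the addable nodes of $\bmu_n^{max}$ and their residues: writing $n = ql + r$ with $0 \leq r < l$, the addable nodes are $(q+2,1,m)$ for $m \leq r$ and $(q+1,1,m)$ for $m > r$, so the set of admissible final residues is $R = \{\kappa_m - q - 1 : 1 \leq m \leq r\} \cup \{\kappa_m - q : r < m \leq l\}$. Observing that $\bi_n^{max}\iota$ equals $\bi^{\blambda}$ for some one-column multipartition $\blambda$ of $n+1$ precisely when $\iota \in R$, the hypothesis of the corollary reads $\iota \notin R$, and the goal is to show $e(\bi_n^{max}\iota) = 0$.

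The main idea is to try to commute the final string, of residue $\iota$, as far to the left as possible through the symbolic diagram for $\bi_n^{max}\iota$, using the free-move equivalence $\sim$ from Section 3. A swap with the adjacent residue $j$ on the left preserves the two-sided ideal generated by the idempotent whenever $|\iota - j| > 1$. Proceeding leftwards, either $\iota$ reaches position $1$, or it meets an obstacle, that is, a residue $j \in \{\iota - 1, \iota, \iota + 1\}$.

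In the obstruction-free case, strong adjacency-freeness forces $\iota \notin \{\kappa_1, \ldots, \kappa_l\}$, since each $\kappa_m$ itself already appears in the first row of $\bi_n^{max}$; hence the resulting idempotent $e(\bj)$ with $j_1 = \iota$ vanishes by relation (\ref{eq12}). Otherwise, when $\iota$ meets an obstacle $j$, I plan to apply Lemma \ref{trio} (when $j = \iota \pm 1$) or a dot-moving argument in the style of Lemma \ref{dot-salta} (when $j = \iota$) to split the idempotent into a linear combination of a ``dotted'' term, which vanishes by Lemma \ref{base-de-induccion} combined with Lemma \ref{concatenation} and an outer induction on $n$, together with a ``shifted'' term in which $\iota$ has effectively moved past the obstacle. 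Iterating this reduction, each step either decreases $n$ or brings $\iota$ closer to position $1$; in every case the remaining terms eventually match one of the vanishing patterns (\ref{eq12}), (\ref{eq13}), or (\ref{eq13a}).

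The principal obstacle is the bookkeeping: one must verify that for every invalid $\iota$ the reduction terminates in a vanishing configuration and never produces an unbounded chain of obstacles. Here the strong adjacency-freeness conditions (ii)--(iv) of Definition \ref{strongadj} play a crucial role, ruling out problematic patterns such as $\kappa_i - \kappa_j \equiv \pm 1$ or $\kappa_1 \equiv \kappa_l + 2 \pmod e$ which would otherwise allow an obstacle at one place to generate a new obstacle elsewhere and prevent the induction from terminating.
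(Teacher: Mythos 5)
Your proposal follows essentially the same route as the paper's proof: move the final strand of residue $\iota$ leftwards by free moves and, at each obstacle, split the idempotent via Lemma \ref{doblei} (equal residue) or Lemma \ref{trio} (the pair $\iota(\iota-1)$ with an $\iota$ brought down from the row above) into dotted terms killed by Lemma \ref{base-de-induccion} together with Lemma \ref{concatenation}, iterating until a vanishing relation applies. The only cosmetic differences are that for the equal-residue obstacle the relevant lemma is \ref{doblei} rather than \ref{dot-salta}, and that you treat explicitly the obstruction-free case (killed by relation (\ref{eq12})), which the paper passes over in silence.
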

\begin{proof}
We have that
\begin{equation}\label{corI}
e( \bi_n^{max} \iota )=
(\ldots \mid \kappa^{m-1}_1,  \ldots, \kappa^{m-1}_l 
\mid  \kappa^{m}_1,  \ldots, \kappa^{m}_l   \mid \ldots \mid \iota\, ).
\end{equation}
By the strong adjacency-freeness $ \iota $ moves here freely to the left until it hits another $ \iota $ or a pair
$ \iota(\iota-1)$. In the first case, using Lemma \ref{doblei} we replace the appearing
$ \iota \iota $ by $ \overset{\, \, \bullet}{\iota}i $,
and get by the Lemma that $ e( \bi_n^{max} \iota )= 0$, as claimed. In the second case, we replace
$ \iota(\iota-1) \iota$ by a linear combination of $ \overset{\, \, \bullet}{\iota}\iota (\iota-1) $
and $ (\iota-1) \iota \iota$. Proceeding as in the Lemma, we finally find
that this is zero.
\end{proof}

Let us illustrate the Corollary on the
example
\begin{equation}
  \left(\, \gyoung(;0,;9,;8,;7,;6,;5),\gyoung(;2,;1,;0,;9,;8,;7),
  \gyoung(;4,;3,;2,;<1>,;0,:), \gyoung(;7,;6,;5,;<4>,;3,:) \, \right)
\end{equation}
already considered in ({\ref{exampleMax}}). Here we can use $ \iota \neq 4,6,9,2 $ in the Corollary.
We then conclude from the Corollary that
$$  e(0,2,4,7, 9,1,3,6,8,0,2,5,7,9,1,4,6,8,0,3,5,7, \iota) =0  $$
for these choices of $ \iota$.

\medskip 
We generalize 
the previous Lemma and Corollary to arbitrary multipartitions in the following way.
{\color{black}{Recall that $ <  $ is the total order introduced in (\ref{lexiorderpar}). }}

\begin{lemma}\label{uppertriangularI}
 For $ \blambda $ any multipartition of $ n $ {\color{black}{and for $ 1 \le k \le n $}} we have that
\begin{equation}{\label{trian1}}
y_k e(\bi^{\blambda}) = e(\bi^{\blambda}) y_k =  \sum_{ \bmu > {\blambda}} D_{\bmu} 
\end{equation}
where the sum runs over multipartitions $ \bmu $ of $n$ 
{\color{black}{and $ D_{\bmu}$ factorizes over $ e(\bi^{\bmu})   $.}}
Suppose moreover that $ D_{\blambda} $ is any element of $ \B $ 
{\color{black}{and that $D_{\blambda}$ factorizes over $e(\bi^{\blambda}) $}} and {\color{black}{assume}} 
$ \iota \in \II $.
Then we have that
\begin{equation}{\label{trian2}} D_{\blambda}\cdot \iota= \sum_{ \bmu > {\blambda}} C_{\bmu} \end{equation}
where $ \bmu $ runs over multipartitions of $ n+1$
{\color{black}{and $ C_{\bmu}$ factorizes over $ e(\bi^{\bmu})$}.}
Furthermore, if $ \bi^{\blambda} \iota $ is not of the form $ \bi^{\bnu} $ for any multipartition $ \bnu $ of
$ n+1$ then we have that
\begin{equation}{\label{trian21}} D_{\blambda}\cdot \iota= \sum_{ \bmu  \mid_n > {\blambda}} C_{\bmu} \end{equation}
where once again the sum runs over multipartitions $ \bmu $ of $n+1$ 
{\color{black}{and $ C_{\bmu}$ factorizes over $ e(\bi^{\bmu}).$}}
\end{lemma}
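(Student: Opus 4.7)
The plan is to prove the three assertions by induction on $\blambda$ in the total order $<$, starting from the largest multipartition $\bmu_n^{max}$ and going downwards. The base case $\blambda = \bmu_n^{max}$ is handled directly: Lemma \ref{base-de-induccion} gives $y_k e(\bi^{max}) = 0$, which is the empty sum required by \eqref{trian1}, and Corollary \ref{cor-base-de-induccion} together with the already-established \eqref{trian1} takes care of \eqref{trian2} and \eqref{trian21} in this situation.

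For the inductive step on \eqref{trian1}, I would take $\blambda \lhd \bmu_n^{max}$ with all three statements known for every $\blambda' > \blambda$, and proceed exactly as in Lemma \ref{base-de-induccion}. Setting $A = \res(\bT^\blambda(k))$, push the dot at position $k$ leftward through the diagram: Lemma \ref{dot-pasa} lets it pass freely over strands whose residues are not $A \pm 1$, while Lemmas \ref{dot-salta} and \ref{trio} produce, when the dot meets adjacent residues $A\pm 1$, either (i) a ``dot-promoted'' summand that vanishes by the inductive hypothesis on $n$ combined with Lemma \ref{concatenation}, or (ii) a summand in which two strands have been swapped, whose new ambient idempotent is $e(\bi^\bmu)$ for a multipartition $\bmu$ obtained from $\blambda$ by moving a node to a strictly $\unlhd_\theta$-greater addable position; by Lemma \ref{order-bijection} this forces $\bmu \rhd \blambda$, and by induction such a term has the required form. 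Strong adjacency-freeness guarantees that the dot-pushing can always be continued or terminated by \eqref{eq13a}, \eqref{mal-inicio} or \eqref{otro-mal-inicio}.

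For \eqref{trian2} and \eqref{trian21}, writing $D_\blambda = a\, e(\bi^\blambda)\, b$ gives $D_\blambda \cdot \iota = \iota_{n+1}(a)\, e(\bi^\blambda \iota)\, \iota_{n+1}(b)$. If $\bi^\blambda \iota = \bi^\bnu$ for some multipartition $\bnu$ of $n+1$, then $\bnu \mid_n = \blambda$, so $\bnu \ge \blambda$ and the whole product already contributes an acceptable $C_\bnu$ for \eqref{trian2}. Otherwise, we mimic Corollary \ref{cor-base-de-induccion}: move the new strand of residue $\iota$ leftward using relation \eqref{lazo} across non-adjacent residues until it meets either another strand of residue $\iota$ (apply Lemma \ref{doblei}) or a configuration $\iota(\iota\pm 1)\iota$ (apply Lemma \ref{trio}); each resulting summand either vanishes via \eqref{mal-inicio}, \eqref{otro-mal-inicio} and the inductive hypothesis on \eqref{trian1}, or factors over $e(\bi^\bmu)$ for some multipartition $\bmu$ of $n+1$ with $\bmu \mid_n \rhd \blambda$ strictly, which is precisely what \eqref{trian21} demands.

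The main obstacle is combinatorial: for general $\blambda$, neither the dot movements (in part one) nor the $\iota$-strand movements (in parts two and three) follow the clean row-by-row pattern available for $\bmu_n^{max}$, so each swap must be translated, via Lemma \ref{order-bijection} and Theorem \ref{Teo-Ehr}, into an explicit node-move-up operation and one must verify that the resulting shape is strictly greater than $\blambda$ in the order $<$. The conditions (ii)--(iv) of Definition \ref{strongadj} are essential throughout: they prevent unintended collisions between strands of nearly equal residue and, through condition (iii) in particular, rule out the pathological wrap-around patterns that could obstruct the reduction to the inductive case.
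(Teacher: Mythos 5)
There is a genuine gap, and it lies in the induction scheme. You run a single downward induction on $<$ over multipartitions (of fixed size $n$), but the dot-pushing argument forces you out of that framework: when the dot meets the adjacent residue $A+1$ and Lemma \ref{dot-salta} (or later Lemma \ref{trio}) is applied, the ``dot-promoted'' summand is of the form $y_{j}\,e(\bj)$ where the prefix of $\bj$ up to position $j$ is $\bi^{\blambda}\!\mid_j$, the residue sequence of a multipartition of a \emph{smaller} number of boxes. Your claim that this summand ``vanishes by the inductive hypothesis on $n$ combined with Lemma \ref{concatenation}'' is wrong on two counts. First, no induction on $n$ is available in your setup: the restrictions $\blambda\!\mid_j$ lie \emph{below} $\blambda$ in the extended order $<$, so a downward induction on $<$ gives no hypothesis for them. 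Second, vanishing is simply false for general $\blambda$: $y_j e(\bi^{\blambda}\!\mid_j)=0$ only in the maximal case treated in Lemma \ref{base-de-induccion}; in general this element is a nonzero sum of terms factorizing over $e(\bi^{\btau})$ with $\btau>\blambda\!\mid_j$, and to carry these along one must re-concatenate the remaining residues one at a time using (\ref{trian2}) for the smaller sizes --- Lemma \ref{concatenation} alone only says that concatenation sends $0$ to $0$, which is not enough. This is precisely why the paper's proof is a simultaneous induction on $n$ for all three statements (with an inner induction on $k$ for (\ref{trian1}), handling $k<n$ by restriction-plus-concatenation, and a downward induction on $<$ only for (\ref{trian21})); without the master induction on $n$ the dot-promoted terms, and likewise the $(A-1)AA$-type terms produced by Lemma \ref{trio}, cannot be resolved.

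A related imprecision: after a swap the ambient idempotent is \emph{not} immediately $e(\bi^{\bmu})$ for a multipartition $\bmu$; the swapped term $A(A+1)$ must be processed further (moving $A$ on to the left, possibly cascading through $A$ and $A-1$ obstacles, each step invoking (\ref{trian1}) or (\ref{trian2}) at smaller parameters) before one reaches either a genuine gap, where the sequence really is $\bi^{\bmu}$ with $\bmu>\blambda$, or a term killed by (\ref{mal-inicio})/(\ref{otro-mal-inicio}). Your treatment of (\ref{trian2}) and (\ref{trian21}) --- reduction to $D_{\blambda}=e(\bi^{\blambda})$, the case $\bi^{\blambda}\iota=\bi^{\bnu}$, the base case via Corollary \ref{cor-base-de-induccion}, and the downward induction on $<$ --- does match the paper, but it too leans on (\ref{trian1}) and (\ref{trian2}) for smaller $n$ at the point where Lemma \ref{doblei} produces a dotted term, so the missing induction on $n$ is needed there as well. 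To repair the proof, set up the induction on $n$ first, prove the three statements together for each $n$, and replace every appeal to ``vanishing'' of dotted prefixes by the expansion into higher terms followed by concatenation via (\ref{trian2}).
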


\begin{proof}
  We first give an example which might be useful to have in mind while going
  through the
  arguments of the actual proof.
For $ n= 28$, $ e=9 $ and $ \blambda = ( (1^6), (1^4), (1^9), (1^9)) $ we have the following
residue diagram for $ \bT^{\blambda}$
\begin{equation}  \left(\, \gyoung(;0,;8,;7,;6,;5,;4,:,:,:),\gyoung(;2,;1,;0,;8,:,:,:,:,:),\gyoung(;4,;3,;2,;1,;0,;8,;7,;6,;5),
  \gyoung(6,;5,;4,;3,;2,;1,;0,;8,;7) \right).
\end{equation}

\medskip
In this case, in order to {\color{black}{prove}} ({\ref{trian1}})
we must show for $1 \le  i \le {\color{black}{27}} $
that $ y_{i}e({\bi^{\blambda}})  $
is a linear combination
$\sum_{ \bmu > {\blambda}} D_{\bmu}  $ as indicated
and for ({\ref{trian21}}) we must show that
for $ \iota  \in \II \setminus \{4,6\} $ we have that $ D_{\blambda} \cdot \iota  $ is a linear combination
$\sum_{ \bmu \mid_n > {\blambda}} C_{\bmu}  $ as indicated.

We now prove all statements of the Lemma by induction on $ n$, the basis case
$ n= 1 $ being straightforward.
We first prove ({\ref{trian1}}) by induction on $ k $. For $ k < n $ we use the inductive hypothesis on $ n $ to write $ y_k e(\bi^{\blambda} \! \mid_k)  $ 
in the form 
\begin{equation}\label{ASBEFORE}
y_k e(\bi^{\blambda} \! \! \mid_k) = \sum_{ \bmu > {\blambda  \mid_k}} D_{\bmu}
\end{equation}
where the sum runs over multipartitions $ \bmu $ of $ k $
{\color{black}{and $ D_{\bmu} \in  \langle e(\bi^{\bmu}) \rangle  $.}}
Let $ \bi^{\blambda} = (i_1, i_2, \ldots, i_n ) $. We then 
get $y_k e(\bi^{\blambda})=  y_k e(\bi^{\blambda} \! \! \mid_k \!  i_{k+1} \cdots i _n) $ in 
the form
\begin{equation}\label{hastheform}
y_k e(\bi^{\blambda}) = \sum_{ \btau > \bmu > {\blambda  \mid_k}} D_{\btau}
\end{equation}  
by concatenating each $ D_{\bmu} $ on 
the right with $ i_{k+1} \cdots i _n $ and using in each step the inductive hypothesis for ({\ref{trian2}}).
Here $ \bmu $ is as
in {\color{black}{(\ref{ASBEFORE})}}
whereas $ \btau $ runs over multipartitions of $n$. 
But $ \btau > \bmu > {\blambda \!  \mid_k} $ 
implies $ \btau > {\blambda  } $ and so (\ref{hastheform}) has the form 
indicated in ({\ref{trian1}}).

In order to show ({\ref{trian1}}) for $ k = n $ we return 
to our symbolic notation. We have 
\begin{equation}
e( \bi^{\blambda})=
(\kappa_1^1, \ldots, \kappa_{l_1}^{1} \mid
\kappa_1^2, \ldots, \kappa_{l_2}^{2}
\mid \cdots \mid
\kappa_1^r, \ldots, \kappa_{l_r}^{r})
\end{equation}
where $ \kappa_1^j, \ldots, \kappa_{l_j}^{j} $
are the residues of the $ j$'th row of $ [\blambda]$. In this notation, in order to show ({\ref{trian1}})
we must show that
\begin{equation}\label{must-show}
y_n e( \bi^{\blambda})=
(\kappa_1^1, \ldots, \kappa_{l_1}^{1} \mid
\kappa_1^2, \ldots, \kappa_{l_2}^{2}
\mid \cdots \mid
\kappa_1^r, \ldots, \overset{\, \, \bullet}{A})=\sum_{ \bmu > {\blambda}} D_{\bmu}
\end{equation}
where
$ A = \kappa_{l_r}^{r}$.

We now move $\overset{\,\, \bullet}{A} $ freely to the left until it meets its first obstacle, which by
strong adjacency-freeness is
$ A+1 $ coming from the node on top of the node of $\overset{\,\, \bullet}{A} $. 
We next use Lemma \ref{dot-salta}
to replace our sequence involving $  (A+1) \overset{\,\, \bullet}{A} $ by a linear combination of sequences involving 
$   (\overset{\,\, \bullet}{A+1}) A$ and $    A (A+1)$. As
{\color{black}{in the proof of (\ref{hastheform})}}
the first term involving
$(  \overset{\,\, \bullet}{A+1}) A$ is of the indicated form by induction hypothesis and we must therefore consider the
second term $  A(A+1)$.
We here move $ A $ freely to the left until it meets its first obstacle which must be
$ A $, $ A+1$ or $A-1$. If it is $A$ we use Lemma \ref{doblei} to replace $ AA $ by $ \overset{\,\, \bullet}{A}A$
and can once again use the induction hypothesis. If it is $ A-1$, the situation gives rise to a triple $ A(A-1)A$ where
the first $ A$ comes from the residue on top of the node of $A-1$. On this triple, we use Lemma \ref{trio}
to rewrite $ A(A-1)A$ as a linear combination of $ \overset{\,\, \bullet}{A}A (A-1) $ and
$ (A-1)AA$.
Here the first term is dealt with using the induction hypothesis for {\color{black}{(\ref{trian1})}}, whereas
{\color{black}{the second term is dealt with using the induction
    hypothesis for (\ref{trian2})}}.

We now consider the third case where $ A$ meets $A+1$. (In the previous Lemma \ref{base-de-induccion},
this case did not occur). But this case corresponds to a 'gap' in the diagram, where $ A$ can
be positioned giving rise to the diagram $ \bmu $ of a multipartition that satisfies $ \bmu > \blambda$.
Summing up, this proves the inductive step of ({\ref{trian1}}). The $ \bmu $'s that appear in
the final expansion ({\ref{trian1}}) are exactly those that arise from this last case.

\medskip
Let us now focus on the claims ({\ref{trian2}}) and ({\ref{trian21}}).
Clearly it is enough to show them for $ D_{\blambda} = e(\bi^{\blambda}) $ so let us do that.
We first note that ({\ref{trian2}}) is a consequence of ({\ref{trian21}}). 
Indeed, if $ \bi^{\blambda} \iota  $ is not of the form $ \bi^{\bnu} $ for any multipartition $ \bnu$
we have from ({\ref{trian21}}) that
\begin{equation}
  D_{\blambda}\cdot \iota= \sum_{ \bmu  \mid_n > {\blambda}} C_{\bmu} = 
\sum_{ \bmu  > {\blambda}} C_{\bmu}
\end{equation}
where we for the last equality used that in general $ \bmu > \bmu \! \mid_n $,
see the definition of $ > $ given in (\ref{lexiorderpar}).
On the other hand, if $ \bi^{\blambda} \iota = \bi^{\bnu}  $ for a multipartition $ \bnu$ of $ n+1 $, then we have that 
$ \bnu > \blambda $  
and $e(\bi^{\blambda} \iota) =  e( \bnu )  = C_{\bnu}$ and so 
({\ref{trian2}}) also holds in this case.

\medskip
Let us now prove ({\ref{trian21}}) by downwards induction on $ <$.
For $ \bi^{\blambda } = \bi^{max} $, it holds by Corollary \ref{cor-base-de-induccion}.
We now fix an arbitrary multipartition $ \blambda $ and assume that ({\ref{trian21}})
has been proved 
for multipartitions $ \bnu $ such that $ \bnu > \blambda$. Then 
in the above sequence notation, and writing $ A $ for $ \iota$, for ({\ref{trian21}}) we must show that
\begin{equation} e(\bi^{\blambda})  \cdot \iota = 
(\kappa_1^1, \ldots, \kappa_{l_1}^{1} \mid
\kappa_1^2, \ldots, \kappa_{l_2}^{2}
\mid \cdots \mid
\kappa_1^r, \ldots, \kappa_{l_r}^{r} \mid A )  = \sum_{ \bmu \mid_n > {\blambda}} C_{\bmu}
\end{equation}
where $ A $ is positioned in the $n+1$'st position.
Since we assume that the sequence is not of the form $ \bi^{\bnu} $ for $ {\bnu} $ for any multipartition
we can move $ A $ to the left until it meets its first obstacle, which must be $ A $, $ A-1 $ or $A+1$.
If it is $ A $ we proceed essentially as before:
we use Lemma \ref{doblei} to replace $ AA $ by $ \overset{\,\, \bullet}{A}A$
and can now use the induction hypothesis. Indeed, if $ \overset{\,\, \bullet}{A}$ is
situated in the $ k$'th position we are dealing with
$y_k e(\bi^{\blambda})=  y_k e(\bi^{\blambda} \! \! \mid_k \!  i_{k+1} \cdots i _n i _{n+1}) $
where $ i _{n+1} = \kappa_{l_r}^{r} $ and so on for the other $i_j$'s.
Using the inductive hypothesis for $ n $ on 
({\ref{trian1}}) and ({\ref{trian2}})
we get, arguing as in connection with
(\ref{hastheform}), that 
\begin{equation}\label{arguing1}
  y_k e(\bi^{\blambda} \! \! \mid_k \!  i_{k+1} \cdots i _n) =
 \sum_{ \btau > {\blambda  }} D_{\btau}
\end{equation}
where $ \btau $ runs over multipartitions of $ n $.
Finally, we use the inductive hypothesis for $ < $ to write 
\begin{equation}\label{arguing2}
e(\bi^{\blambda})  \cdot \iota
=   y_k e(\bi^{\blambda} \!  \! \mid_k \!  i_{k+1} \cdots i _n  i _{n+1}) =
\sum_{ \btau > {\blambda  }} D_{\btau} \cdot i _{n+1} =
\sum_{ \bmu > \btau  > {\blambda  }} D_{\bmu} = \sum_{ \bmu \mid_n   > {\blambda  }} D_{\bmu}
\end{equation}
where the last equality follows from the fact that $ \btau $ and $ \bmu $ run over multipartitions of $ n $ and $ n+1 $.
Hence (\ref{arguing2}) 
has the form required for ({\ref{trian21}}).

If the first obstacle is $ A-1 $ we essentially argue as before: 
the situation gives rise to a triple $ A(A-1)A $ which we
rewrite, using Lemma \ref{trio}, as a linear combination
of $ \overset{\,\, \bullet}{A}A (A-1) $ and $ (A-1) AA$.
Arguing as for (\ref{arguing1}) and (\ref{arguing2}) we get the term involving
$ \overset{\,\, \bullet}{A}A (A-1) $ in the form indicated in 
({\ref{trian2}}), whereas for the term involving $ (A-1) AA$
{\color{black}{we use the inductive hypothesis for (\ref{trian2})}}.
  
Finally, if the first obstacle is $ A+1$ we also argue as before, essentially.
Indeed, in this situation there is a gap where $ A $ can be placed. This gives rise to a multipartition $ \btau $ of
$ k $ such that $ \btau > \blambda \! \! \mid_k $ where $ k$ is the position of $ A$ and so we get, arguing as before, that
\begin{equation}
  e(\bi^{\blambda})  \cdot \iota  =
 e(\bi^{\btau}   i_{k+1} \cdots i _n  i_{n+1}) =  \sum_{ \bmu \mid_n > {\blambda  }} D_{\bmu}.
\end{equation}
This finishes the proof of the Lemma.
\end{proof}

\begin{corollary}\label{idempotent-expansion}
For each $ \bi \in \II^n $ there is an expansion in $ \B$ of the form 
\begin{equation}\label{idemptentexpansion}
e(\bi) = \sum_{ \bmu } D_{\bmu}
\end{equation}
where the sum runs over multipartitions $ \bmu $ of $n$
{\color{black}{and $ D_{\bmu} $ factorizes over $ e(\bi^{\bmu})   $}}.
\end{corollary}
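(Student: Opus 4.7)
The plan is to induct on $n$, reducing to Lemma \ref{uppertriangularI} via the concatenation homomorphism $\iota_n : \mathcal{B}_{n-1} \to \B$ of Lemma \ref{concatenation}. Since all of the combinatorial work has already been done inside the proof of (\ref{trian2}), the Corollary amounts to a clean bookkeeping argument.

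For the base case $n=1$, any residue sequence $\bi = (i_1)$ falls into one of two cases. If $i_1 = \kappa_m$ for some $m$, then $\bi = \bi^{\bmu}$ for the unique one-node multipartition $\bmu$ whose single node lies in the $m$-th component, and $e(\bi) = e(\bi^{\bmu})$ is itself a term of the required form. Otherwise $i_1 \notin \{\kappa_1,\dots,\kappa_l\}$, and relation (\ref{eq12}) gives $e(\bi)=0$, so the empty sum works.

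For the inductive step, write $\bi = \bi' j$ with $\bi' \in \II^{n-1}$ and $j \in \II$. By the inductive hypothesis applied in $\mathcal{B}_{n-1}$, there is an expansion
\[ e(\bi') = \sum_{\bmu} D'_{\bmu} \]
where $\bmu$ ranges over multipartitions of $n-1$ and each $D'_{\bmu}$ factorizes over $e(\bi^{\bmu})$. Applying the (non-unital) algebra homomorphism $\iota_n$ with through-line of residue $j$, and using the identity $\iota_n(e(\bi')) = e(\bi' j) = e(\bi)$, this becomes
\[ e(\bi) = \sum_{\bmu} D'_{\bmu}\cdot j \]
in $\B$. Now statement (\ref{trian2}) of Lemma \ref{uppertriangularI}, applied at the step from $n-1$ to $n$, rewrites each summand as $D'_{\bmu}\cdot j = \sum_{\btau > \bmu} C_{\btau}$, where $\btau$ ranges over multipartitions of $n$ and $C_{\btau}$ factorizes over $e(\bi^{\btau})$. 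Collecting these contributions across all $\bmu$ yields the required expansion (\ref{idemptentexpansion}).

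There is no genuine obstacle here; the only minor point to verify is that $\iota_n$ carries the factorization $D'_{\bmu} \in \langle e(\bi^{\bmu})\rangle_{\mathcal{B}_{n-1}}$ into a factorization of $\iota_n(D'_{\bmu})$ over $e(\bi^{\bmu} j) \in \B$. This is immediate from the multiplicativity of $\iota_n$ and the fact that it sends $e(\bi^{\bmu})$ to $e(\bi^{\bmu} j)$, so the hypothesis of (\ref{trian2}) is satisfied for every summand and the induction closes.
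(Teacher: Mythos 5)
Your proposal is correct and is essentially the paper's own proof: induction on $n$, concatenating the last residue via $\iota_n$ and invoking (\ref{trian2}) of Lemma \ref{uppertriangularI} on each summand of the inductive expansion of $e(\bi_{n-1})$. The only differences are cosmetic — you spell out the (trivial) base case and note explicitly that the hypothesis of (\ref{trian2}) is met, which the paper leaves implicit.
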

\begin{proof}
We argue by induction on $ n $, the base case $ n=1 $ being trivial.
Assuming that 
(\ref{idemptentexpansion}) holds for $ n-1$ we prove it for $ n$. 
Suppose that $ \bi= (i_1,  \ldots, i_{n-1}, i_n) $ and set $ \bi_{n-1} = (i_1,  \ldots, i_{n-1}) $.
Then by induction we have that 
\begin{equation}
e(\bi_{n-1}) = \sum_{\, \,\, \, \bmu_{n-1} } D_{\bmu_{n-1}}
\end{equation}
where $ \bmu_{n-1}$ runs over multipartitions of $(n-1)$ and where 
$ D_{\bmu_{n-1}} $ {\color{black}{factorizes over}} $ e(\bi^{\bmu_{n-1}})   $. 
Using ({\ref{trian2}}) of the previous Lemma \ref{uppertriangularI} we then get 
\begin{equation}
e(\bi) = e(\bi_{n-1}) i_{n} = \sum_{\, \,\, \, \bmu_{n-1} } D_{\bmu_{n-1}} i_{n} = 
\sum_{\, \,\, \, \bmu_{n-1} }\sum_{\, \,\, \, \bnu > \bmu_{n-1} } D_{\bnu} 
\end{equation}
and so $ e(\bi)  $ is of the form claimed in (\ref{idemptentexpansion}).
\end{proof}

For any $ w \in \Si_n $ we choose once and for all a reduced expression $   s_{i_1} s_{i_1} \cdots s_{i_N} $ and define $ \psi_{{w}} \in \B $ via 
this expression
\begin{equation}\label{official}
\psi_{{w}} := \psi_{i_1} \psi_{i_1} \cdots \psi_{i_N}.
\end{equation}
Note that $ \psi_{{w}} $ depends on the choice of reduced expression, not just on $ w$.
We denote by \emph{official reduced expression for $w$} the expression used in (\ref{official}).
If $ w_1=   s_{j_1} s_{j_1} \cdots s_{j_N} $ is another, \emph{'unofficial'}, reduced expression for $ w $ 
then the error term in using $ w_1 $ instead of $ w $ can be controlled, in the sense that we have that 
\begin{equation}\label{official}
  \psi_{{w}} - \psi_{j_1} \psi_{j_1} \cdots \psi_{j_N} = \sum_{  \underline{k}\in
    {{\color{black}{\No^n}} , v \in \Si_n,  w < v}}
  c_{\underline{k} ,v} \, y^{\underline{k}} \,\psi_v =
  \sum_{  \underline{k}\in {{\color{black}{\No^n}} , v \in \Si_n,  w < v}}
  d_{\underline{k} ,v} \,\,\psi_v \,  y^{\underline{k}}  
\end{equation}
where $ c_{\underline{k},v}, d_{\underline{k},v} \in \F $ and 
where for $ \underline{k} = (k_1, \ldots, k_n )  \in  \No^n$ we define $ y^{\, \underline{k}} :=
  y_1^{k_1} \cdots y_n^{k_n}  \in \B$. 

  \medskip
Let $ \blambda \in \OnePar $ be a one-column multipartition and suppose that $ \Bs, \bT \in \tab(\blambda) $. 
For the associated group elements $ {d(\Bs)}, {d(\bT)} \in \Si_n$ we have $ \psi_{d(\Bs)}, \psi_{d(\bT)} \in \B $
defined via the official reduced expression for $ d(\Bs) $ and $ {d(\bT)} $.
We then set 
\begin{equation}
m_{\Bs \bT} = \psi_{d(\Bs)}^{\ast} e(\bi^{\blambda}) \psi_{d(\bT)} \in \B
\end{equation}
and define $ \Basis \subseteq \B $ via 
\begin{equation}\label{basisdef}
 \Basis := \{ m_{\Bs \bT} \mid \Bs, \bT \in \std(\blambda), \blambda \in \OnePar \}.
\end{equation}
A main goal of our paper is to show that $ \Basis $ is a cellular basis for $ \B$. 
Our first step towards this goal is to show that $  \Basis $ is a generating set for $ \B$. 
We start with the following Lemma.

\begin{lemma}\label{westart}
{\color{black}{Suppose that $ D_{\blambda} \in \B $ factorizes over $ e(\blambda)   $.}}
Then there is an expansion of the form 
\begin{equation}\label{otrainduction}
D_{\blambda} = 
 \sum_{\Bs, \bT \in \tab(\bmu), \, \bmu \ge \blambda}   c_{\Bs \bT} m_{\Bs \bT} 
\end{equation}
where $  c_{\Bs \bT} \in \F $.
\end{lemma}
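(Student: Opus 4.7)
The plan is to argue by downward induction on $\blambda$ with respect to the total order $<$ of (\ref{lexiorderpar}), with base case $\blambda = \bmu^{max}$. Since $D_{\blambda}$ lies in the two-sided ideal $\langle e(\bi^{\blambda}) \rangle$, one may write $D_{\blambda} = \sum_j a_j \, e(\bi^{\blambda}) \, b_j$ for certain $a_j, b_j \in \B$. Using the defining relations of $\B$---in particular the idempotent orthogonality (\ref{orto1}), the commutation $y_r e(\bi) = e(\bi) y_r$ from (\ref{eq3}), and the quadratic and braid relations (\ref{eq10}), (\ref{eq11})---every element of $\B$ expands as a linear combination of monomials $\psi_w^* \, y^{\underline{k}} \, e(\bi)$ with $w \in \Si_n$ (written using some, possibly non-official, reduced expression), $\underline{k} \in \No^n$, and $\bi \in \II^n$. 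After applying such a normalization to each $a_j$ and $b_j$ and using $e(\bi)\,e(\bi^{\blambda}) = \delta_{\bi, \bi^{\blambda}} e(\bi^{\blambda})$ together with the commutativity of the $y$'s with $e(\bi^{\blambda})$, the element $D_{\blambda}$ takes the form $\sum_{w, v, \underline{k}} c_{w, v, \underline{k}} \, \psi_w^* \, y^{\underline{k}}\, e(\bi^{\blambda}) \, \psi_v$ for scalars $c_{w,v,\underline{k}} \in \F$.

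The contributions with $\underline{k} \neq 0$ are dispatched by Lemma \ref{uppertriangularI}: iterating (\ref{trian1}) shows that $y^{\underline{k}} \, e(\bi^{\blambda})$ is a sum of elements factorizing over $e(\bi^{\bmu})$ with $\bmu > \blambda$, a property that is preserved under left and right multiplication by $\psi_w^*$ and $\psi_v$. The inductive hypothesis then rewrites each such contribution as a linear combination of terms $m_{\Bs \bT}$ with $\shape(\Bs) = \shape(\bT) = \bnu \geq \bmu > \blambda$, as required. The remaining $\underline{k} = 0$ contribution is $\sum_{w, v} c_{w,v,0} \, \psi_w^* \, e(\bi^{\blambda}) \, \psi_v$. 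Because the $\Si_n$-action on $\tab(\blambda)$ is transitive and faithful, each $w$ is uniquely of the form $d(\Bs)$ and each $v$ of the form $d(\bT)$ for $\Bs, \bT \in \tab(\blambda)$, so modulo the difference between official and unofficial reduced expressions, this sum is already $\sum c \, m_{\Bs\bT}$.

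The one remaining subtlety, and the main obstacle, is controlling the error introduced by that conversion. By (\ref{official}) the discrepancy between $\psi_w$ as written and the official $\psi_{d(\Bs)}$ is a linear combination of terms of the form $y^{\underline{k}} \, \psi_u$ (or $\psi_u^* \, y^{\underline{k}}$ after applying $\ast$) with $u > w$ in Bruhat order and $\underline{k} \neq 0$, and similarly for $\psi_v$ on the right. When such an error term meets $e(\bi^{\blambda})$, the resulting factor $y^{\underline{k}}\, e(\bi^{\blambda})$ with $\underline{k} \neq 0$ factorizes over $e(\bi^{\bmu})$ for some $\bmu > \blambda$ by Lemma \ref{uppertriangularI}, and the inductive hypothesis absorbs it into an expansion of the desired form. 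What survives is exactly the required sum $\sum c \, m_{\Bs\bT}$ with $\shape(\Bs) = \shape(\bT) = \blambda$, completing the inductive step. In the base case $\blambda = \bmu^{max}$, Lemma \ref{base-de-induccion} forces $y^{\underline{k}}\, e(\bi^{\bmu^{max}}) = 0$ for every $\underline{k} \neq 0$, so all potential error terms vanish outright and no appeal to the inductive hypothesis is needed.
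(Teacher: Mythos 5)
Your proof is correct and takes essentially the same route as the paper: downward induction on $<$, writing $D_{\blambda}$ as a sum of terms $a\,e(\bi^{\blambda})\,b$, expanding $a$ and $b$ in the spanning set $\mathcal S$ and a reordered variant of it, disposing of the $y$-contributions via Lemma \ref{base-de-induccion} in the base case $\bmu^{max}$ and via Lemma \ref{uppertriangularI} together with the inductive hypothesis in general, and reading off the remaining terms as $m_{\Bs\bT}$ with $\Bs,\bT\in\tab(\blambda)$. The only difference is your explicit bookkeeping of official versus unofficial reduced expressions (note the error terms in (\ref{official}) may have $\underline{k}=0$, but those are again of the required form, so this is harmless), whereas the paper sidesteps the issue by expanding $a$ in the variant of $\mathcal S$ ordered as $\psi_w\, y^{\underline{k}}\, e(\bi)$, so that the surviving $\psi$-only terms are literally of the form $\psi_{d(\Bs)}^{\ast}e(\bi^{\blambda})\psi_{d(\bT)}$.
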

\begin{proof}
It is known that 
\begin{equation}\label{KLRbasis}
{ \mathcal S } := \{ e(\bi) \,y^{\, \underline{k}}\, \psi_w  \mid \bi \in \II^n, \underline{k} \in \No^n, w \in \Si_n \} 
\end{equation}
spans the KLR-algebra $ \R$ over $ \F$, see (2.7) of \cite{brundan-klesc} and
section 2.3 of \cite{KhovanovLauda}. In fact, any permutation of the three factors 
of $ { \mathcal S } $ also gives an $\F$-spanning set for $ \R $ over $ \F$. But by definition $\B$ is a quotient of
$ \R$ and so these sets also span $ \B$ over $ \F$.

We now prove (\ref{otrainduction}) using downwards induction on $ < $.
The induction basis is given by the multipartition $ \blambda := \bmu_n^{max}$, introduced in (\ref{lambda-max}).
We may assume that $ D_{\blambda} =   a\, e(\bi^{\blambda}) \,b $ where $ a, b \in B $, since
$D_{\blambda} $ is a linear combination of such expressions.
We now expand $ a $ in terms of the variation of $ { \mathcal S }$ that uses the product order 
$ \psi_w  y^{\, \underline{k}}\,  e(\bi)  $ 
and then expand $ b  $ in terms of $ { \mathcal S }$. Inserting, we find expressions of the form 
\begin{equation}\label{expansionNumero1}
  D_{\blambda} =  \sum_{ v, w, \underline{k}_1, \underline{k}_2}  c_{ v, w, \underline{k}_1, \underline{k}_2} \psi_{v}
  y^{\, \underline{k}_1} e(\bi^{\blambda}) y^{\, \underline{k}_2} \psi_{w} = 
\sum_{ v,w}  c_{ v, w} \psi_{v}
   e(\bi^{\blambda})  \psi_{w} 
\end{equation}
where we used Lemma \ref{base-de-induccion} for the second equality.
For each appearing $ v,w $ we must now show 
that $ \psi_{v} e(\bi^{\blambda})  \psi_{w} $ is a linear combination of 
$ m_{\Bs \bT}  $ where $ \Bs, \bT \in \tab(\blambda) $.
We set $ \Bs := \bT^{\blambda} v^{-1} $ and $ \bT := \bT^{\blambda} w $. 
Then we have by definition that $ d(\Bs) = v^{-1} $ and $ d(\bT) = w $
and so 
\begin{equation}\label{expansionNumero2}
  D_{\blambda} =  
\sum_{ v,w}  c_{ v, w} \psi_{v}
e(\bi^{\blambda})  \psi_{w}  =
\sum_{ \Bs, \bT}  c_{ \Bs \bT} m_{\Bs \bT}
\end{equation}
and so we obtain the required expansion for $ D_{\blambda} $, at least in the basis case
$ \blambda = \bmu_n^{max}$. 

We next show the existence of the expansion 
(\ref{otrainduction}) for $ D_{\blambda} $ for a general $ \blambda $, assuming that it exists for all $ \bmu > \blambda$.
Once again we may assume that $ D_{\blambda} =  a \,e(\bi^{\blambda}) \,b $ where $ a, b \in \B $ and 
once again we expand $ a $ in terms of the variation of $ { \mathcal S }$ that uses the product order 
$ \psi_w  y^{\, \underline{k}}\,  e(\bi)  $ 
and $ b  $ in terms of $ { \mathcal S }$. Inserting, we now get an expression of the form 
\begin{equation}\label{expansionNumero1}
  D_{\blambda} =  \sum_{ v, w, \underline{k}_1, \underline{k}_2}  c_{ v, w, \underline{k}_1, \underline{k}_2} \psi_{v}
  y^{\, \underline{k}_1} \, e(\bi^{\blambda}) y^{\, \underline{k}_2} \, \psi_{w} = 
\sum_{ v,w}  c_{ v, w} \psi_{v}
   e(\bi^{\blambda})  \psi_{w} + 
\sum_{ \bmu > \blambda}   D_{\bmu} 
\end{equation}
where we this time used Lemma \ref{uppertriangularI} for the last equality.
Arguing as we did in the inductive basis step we now rewrite
$ \sum_{ v,w}  c_{ v, w} \psi_{v}   e(\bi^{\blambda})  \psi_{w} $ as a linear combination of $ m_{\Bs \bT} $'s
and then get
\begin{equation}\label{expansionNumero1}
  D_{\blambda} =  \sum_{ \Bs, \bT \in \tab(\blambda) } c_{\Bs \bT}  m_{\Bs \bT} +
\sum_{ \bmu > \blambda}   D_{\bmu} {\color{black}{.}}
\end{equation}

We now use the inductive hypothesis on the terms $ D_{\bmu}  $ to conclude the proof of the Lemma.
\end{proof}

\begin{lemma}\label{westart2}
The subset of $ \B $ given by 
\begin{equation}\label{tabgen}
 \{ m_{\Bs \bT} \mid \blambda \in \OnePar, \, \Bs, \bT \in \tab(\blambda) \}
\end{equation}
spans $ \B $ over $ \F$.
\end{lemma}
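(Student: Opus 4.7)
The plan is to assemble the spanning statement directly from the previous two results, Corollary \ref{idempotent-expansion} and Lemma \ref{westart}, glued together by the completeness relation (\ref{sum1}). No new combinatorial input should be required, since the heavy lifting has already been done.

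First, I would observe that by (\ref{sum1}) any $a \in \B$ decomposes as $a = \sum_{\bi \in \II^n} a\, e(\bi)$, so it suffices to show that every element of the form $a \, e(\bi)$, with $a \in \B$ and $\bi \in \II^n$, lies in the $\F$-span of the set (\ref{tabgen}). Next, I would invoke Corollary \ref{idempotent-expansion} to write
\begin{equation}
e(\bi) = \sum_{\bmu} D_{\bmu},
\end{equation}
where $\bmu$ runs over one-column multipartitions of $n$ and each $D_{\bmu}$ factorizes over $e(\bi^{\bmu})$. Multiplying on the left by $a$ gives $a\, e(\bi) = \sum_{\bmu} a\, D_{\bmu}$, and since two-sided ideals are closed under left multiplication, each summand $a\, D_{\bmu}$ still factorizes over $e(\bi^{\bmu})$.

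Finally, I would apply Lemma \ref{westart} to each $a\, D_{\bmu}$ to obtain an expansion
\begin{equation}
a\, D_{\bmu} = \sum_{\Bs, \bT \in \tab(\btau),\, \btau \ge \bmu} c_{\Bs \bT}\, m_{\Bs \bT},
\end{equation}
with $c_{\Bs \bT} \in \F$. Summing these expansions over $\bi$ and $\bmu$ produces the desired expression for $a$ as an $\F$-linear combination of elements of (\ref{tabgen}).

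There is really no substantial obstacle here: the proof is essentially a three-line assembly of (\ref{sum1}), Corollary \ref{idempotent-expansion}, and Lemma \ref{westart}, with the only point requiring a moment's thought being the elementary observation that factorization over $e(\bi^{\bmu})$ is preserved under left (and right) multiplication. The genuine difficulty was discharged earlier, in the proofs of Lemmas \ref{base-de-induccion}, \ref{uppertriangularI} and \ref{westart}.
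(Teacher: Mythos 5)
Your proof is correct and follows essentially the same route as the paper: both arguments reduce the statement to Corollary \ref{idempotent-expansion} followed by Lemma \ref{westart}, using that factorization over $e(\bi^{\bmu})$ survives multiplication. The only difference is that the paper first expands an arbitrary element in the spanning set $\mathcal{S}$ of (\ref{KLRbasis}) before replacing the idempotents, whereas you invoke the defining relation (\ref{sum1}) directly — a mild simplification that changes nothing of substance.
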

\begin{proof}
Choose $ b \in \B $ and expand it in terms of $ { \mathcal S }$ as follows
\begin{equation}{\label{firstexpansion}}
b = \sum c_{\bi, \underline{k}, w} \, e(\bi) \,y^{\, \underline{k}}\, \psi_w
\end{equation}
where $ c_{\bi, \underline{k}, w} \in \F$.
Using Corollary \ref{idempotent-expansion} we write each appearing $ e(\bi) $ as a linear combination 
of $ D_{\bmu}$'s where $ \bmu$ runs over multipartitions
{\color{black}{and $ D_{\bmu} $ factorizes over $ e(\bi^{\bmu})$.}}
Inserting this in ({\ref{firstexpansion}}) we find that any $ b \in \B $ is a linear combination of $D_{\bmu}$'s. We
can then apply the previous Lemma \ref{westart} to conclude the proof of the Lemma.
\end{proof}

Our next goal is to show that the non-standard tableaux are not needed in (\ref{tabgen}).
Our method for proving this is an adaption of Murphy's method using Garnir tableaux,  see \cite{Mat} and \cite{Murphy1}. 

\medskip
Let $\blambda$ be a multipartition and $\Bg$ a $\blambda$-tableau. We say that $\Bg$ is a \emph{Garnir tableau}
if there is an $1\leq i< n$ such that 
\begin{itemize}
\setlength\itemsep{-1.1em}
\item[a)]
$\Bg$ is not standard, but $\Bg s_i$ is standard. \\
\item[b)]
If $s\in S$ and $\Bg s\rhd\Bg$ then $s=s_i.$
\end{itemize}
Here are some examples
\begin{equation}\label{herearesome}  
\left(\, \gyoung(;2,;<{1}>,:,:),\gyoung(;3,;5,;6,;7),\gyoung(;4,:,:,:)   \right), \, \, \, \, 
\left(\, \gyoung(;3,;2,:,:),\gyoung(;4,;5,;6,;7),\gyoung(;1,:,:,:)   \right), \, \, \, \, 
\left(\gyoung(;1,;6,;<\doce>,;<\quince>),\gyoung(;2,;7,;\trece,:),\gyoung(;3,;\once,;\diez,:),\gyoung(;4,;8,;<\catorce>,:), \gyoung(;5,;9,:,:)\right).
\end{equation}

We note that a Garnir tableau of shape $ \blambda $ is \emph{not} uniquely 
determined by its 'point of non-standardness' as can be seen on the first two examples
of (\ref{herearesome}). This is opposed to the classical situation.

In order to get a better description of Garnir tableaux we introduce some further notation.
Let $ \blambda $ be a one-column multipartition
and let $\gamma=(r,1, m) $ be a node of 
$ [\blambda] $, which does not belong to the first row of $ [ \blambda]$.
We then denote by $\gamma^+$ the node $(r-1,1, {\color{black}{m}}) $ of $ [ \blambda]$, that is $ \gamma^+ $ is the
node of $ [ \blambda]$ that is situated on top of
$ \gamma$ in $ [\blambda] $.
We then define the \emph{Garnir snake} 
of $\gamma$ as the following interval in $ [ \blambda] $ with respect to $ \lhd $
\begin{equation}
  \Snake(\gamma) := [\gamma,\gamma^+] = \{ \tau \in [\blambda] \mid  \gamma \unlhd \tau \unlhd  \gamma^+ \}
  {\color{black}{.}}
\end{equation}  
We also define 
\begin{equation}
  \bn_{Snake(\gamma)} := \{ i \in \bn \mid \bT^{\blambda}(i ) \in [\gamma,\gamma^+] \}
\end{equation}
that is $ \bn_{Snake(\gamma)} $ is the set of numbers that are used to fill 
in $ \Snake(\gamma)  $ for $ \bT^{\blambda}$.

\medskip
For $ \blambda \in \OnePar$ and $ \gamma = (r,1,m) $ a node of $ [\blambda] $, not belonging to the first row, we define \emph{the classical Garnir
tableau} $ \Bg_{clas, \gamma}$ 
by setting $ \Bg_{clas, \gamma}(i ) := \bT^{\blambda}(i ) $ for $ i \notin \bn_{Snake(\gamma)} $ 
and by requiring that 
the numbers from $ \bn_{Snake(\gamma)} $ are filled in consecutively from left to right in $ \Snake(\gamma)$
except for an upwards jump from $ \gamma $ to $ \gamma^+ $. 
Here is an example 
with $ \gamma= (3,1,3)$
\begin{equation}\label{classicalGarnirExample}
\Bg_{clas, \gamma}=\left(\gyoung(;1,;6,;8),\gyoung(;2,;7,;9),\gyoung(;3,;\once,;\diez),\gyoung(;4,:,:), \gyoung(;5,;\doce,;\trece)\right).
\end{equation}
It should be noted that $ \Bg_{clas, \gamma} $ is \emph{not} a Garnir in the classical sense, as considered for example by Murphy and Mathas.
On the other hand, it 
is similar to the classical Garnir tableaux in the sense that if we view 
the components of $ \blambda $ as the columns of an ordinary partition (possibly with 
'missing' nodes as in the example) then $ \Bg_{clas, \gamma} $ becomes a Garnir tableau in the classical sense.

\medskip
We need another class of Garnir tableaux that we denote $ \tilde{\Bg}_{ \gamma}$.
They are defined by filling in the numbers from 
$ \bn_{Snake(\gamma)} $
into $ \Snake(\gamma)  $ in increasing order, beginning with $ \gamma $, then $ \gamma^+ $ and
the other nodes of the row of $ \gamma^+ $ and finally the remaining nodes of the row of $ \gamma$.
Here is an example with $ \gamma = (3,1,3) $ 
\begin{equation}\label{tildeGarnir}
\tilde{\Bg}_{ \gamma}=\left(\gyoung(;1,;6,;\once),\gyoung(;2,;7,;\doce),\gyoung(;3,;9,;8),\gyoung(;4,:,:), \gyoung(;5,;\diez,;\trece)\right).
\end{equation}

\medskip
Recall the weak order $\succ$ on $ \tab(\blambda) $. The following Lemma relates it
to Garnir tableaux. Set first $ \nstd(\blambda) := \tab(\blambda) \setminus \std(\blambda) $, that
is $ \Bs \in \nstd(\blambda)$ if and only if $ \Bs $ is a non-standard $ \blambda$-tableau.
\begin{lemma}\label{lemma garnir1}
  Suppose that $\bT \in \nstd(\blambda)$. Then
\begin{itemize}
\setlength\itemsep{-1.1em}
\item[a)]
   The tableau $\bT$ is a maximal in $\nstd(\blambda) $ with respect $\succ$
    if and only if $\bT$ is a Garnir tableau. \\
\item[b)] If $\bT$ is a maximal in $\nstd(\blambda) $ with respect $\rhd$ then $\bT$ is a Garnir tableau.
\end{itemize}
\end{lemma}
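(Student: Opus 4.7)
The plan is to prove (a) first and then deduce (b) essentially for free: since $\Bs \succ \bT$ implies $\Bs \rhd \bT$ by the very definition of $\succ$, maximality in $\nstd(\blambda)$ with respect to $\rhd$ implies maximality with respect to $\succ$, and (b) reduces to (a). The easy direction of (a), that a Garnir tableau $\Bg$ is maximal in $\nstd(\blambda)$ with respect to $\succ$, unpacks directly from the Garnir axioms: axiom b) forces the only $s \in S$ with $\Bg s \rhd \Bg$ to be $s_i$, and axiom a) places $\Bg s_i$ in $\std(\blambda)$, so $\Bg$ admits no $\succ$-cover lying in $\nstd(\blambda)$. To upgrade ``no cover'' to genuine maximality I would check, as a brief auxiliary observation, that a $\succ$-ascent from a standard tableau is again standard in the one-column case; this follows from the fact that swapping $i$ and $i+1$ at non-column-adjacent nodes cannot introduce a new column descent. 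Hence any $\succ$-chain emerging from $\Bg$ enters $\std(\blambda)$ at the first step and stays there.

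The substance of the lemma is the reverse implication of (a). Assume $\bT \in \nstd(\blambda)$ is maximal with respect to $\succ$. Since $\bT$ is non-standard we have $\bT \neq \bT^{\blambda}$, so the chain (\ref{sequenceRe}) of Lemma \ref{t-lamb-max} furnishes some $s_i$ with $\bT s_i \succ \bT$, and maximality forces $\bT s_i \in \std(\blambda)$. This gives axiom a), and it remains to establish the uniqueness of $s_i$ demanded by axiom b). I plan to isolate the following combinatorial claim as the heart of the argument: if $\bT \in \nstd(\blambda)$ and $\bT s_i \in \std(\blambda)$, then $\bT$ has exactly one column descent, and in that descent the entry $i+1$ sits immediately above the entry $i$ in some component of $\blambda$. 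Granting this claim, uniqueness is immediate: if both $\bT s_i$ and $\bT s_j$ are standard, the claim identifies the unique descent of $\bT$ as the pair $(\bT(i+1), \bT(i))$ and simultaneously as $(\bT(j+1), \bT(j))$, whence injectivity of $\bT$ forces $i = j$.

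The main obstacle is the proof of the combinatorial claim, which I would attack by a case analysis on the positions $\alpha = \bT(i)$ and $\beta = \bT(i+1)$. First, the swap $s_i$ leaves unchanged every column pair of entries not involving $i$ or $i+1$, so every column descent of $\bT$ must involve at least one of these two entries, lest it survive into $\bT s_i$ and contradict standardness. It then suffices to inspect the possible descent configurations involving $i$ or $i+1$: either $i$ or $i+1$ appears at the top of a column descent, or at the bottom, with a column-neighbour distinct from $i, i+1$. In each case other than ``$\beta$ directly above $\alpha$ in the same component'', the integer constraints on the neighbour (it is forced to satisfy $<i+1$ if above, or $>i$ if below, and to avoid $\{i, i+1\}$) reproduce a column descent after swapping $i \leftrightarrow i+1$, contradicting $\bT s_i \in \std(\blambda)$. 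The only remaining configuration is $\beta$ directly above $\alpha$ in the same component, and a short check verifies that the swap in this configuration both removes the descent at $(\beta, \alpha)$ and creates no new one. This establishes the claim.
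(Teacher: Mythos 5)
Your proposal is correct and follows essentially the same route as the paper: part b) is reduced to a), the Garnir axioms give the easy direction of a), and for the hard direction an ascent $s_i$ is produced from $\bT \neq \bT^{\blambda}$ with maximality forcing $\bT s_i \in \std(\blambda)$, followed by a uniqueness argument. The only difference is one of detail: where the paper tersely asserts that a standard--nonstandard--standard triple $\Bu$, $\Bu s_{i_0}$, $\Bu s_{i_0}s_{j_0}$ forces $i_0=j_0$, you supply the underlying descent case analysis (and likewise make explicit the observation that a $\succ$-ascent from a standard tableau stays standard), which is exactly the content needed to justify the paper's claims.
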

\begin{proof}
Let us first prove $a)$ of the Lemma.
Assume that $\bT$ is a maximal tableau in $\text{NStd}(\blambda)$ with respect to $\succ$.
Then for all $ s_i \in S $ we have that either $ \bT s_i \lhd \bT $ or $ \bT s_i \in \std(\blambda) $. 
If $ \bT s_i \lhd \bT $ for all $ i $ we have that $ \bT = \bT^{\blambda} $ which contradicts that
$ \bT \in \nstd(\blambda) $. Hence there is an $ s_{i_0} $ such that $ \bT s_{i_0} \rhd \bT $ and for this $ s_{i_0} $ we
have $ \bT s_{i_0} \in \std(\blambda) $ by maximality of $ \bT $ in $ \nstd(\blambda)$. On the other hand, there can
only be one $ s_{i_0} $ with this property. Indeed, suppose that also $ \bT s_{j_0} \rhd \bT $. Setting
$ \Bu : = \bT s_{i_0} $ and $ \Bv : = \bT s_{j_0} $ we have that $ \Bu $ and $  \Bu  s_{i_0} s_{j_0} $ are standard
tableaux, whereas $ {\color{black}{ \Bu s_{i_0}}}$ is non-standard. This is only possible if $ i_0 = j_0$ and so $ \bT  $ is
a Garnir tableau, as claimed.

Now assume that $\bT$ is not a maximal tableau in $\nstd(\blambda)$
with respect to $\succ.$ Then there is an $s\in S$ such that $\bT s\rhd\bT$ and $\bT s\in \nstd(\blambda).$
This implies that $\bT$ is not a Garnir tableau.

We now show $ b) $ of the Lemma.
If $\bT$ is a maximal tableau in $\nstd(\blambda)$ with respect to $\rhd$ then $\bT$ is
also a maximal tableau in $\nstd(\blambda)$ with respect to $\succ$, since $ \succ $ is a weaker order than
$ \rhd$, and so $ \bT $ must be a Garnir tableau by $a) $.
This proves $ b) $ of the Lemma.
\end{proof}

 The converse of $b) $ of the Lemma does not hold as can be seen in the following example.
 Let $\blambda=(1^2,1^2,1^2,1^2,1)$ and define 
\begin{equation}\label{theconverse}
 \Bg_1=\left(\gyoung(;1,;7),\gyoung(;2,;8),\gyoung(;5,;4),\gyoung(;6,;9),\gyoung(3,:)\right), \Bg_2=
 \left(\gyoung(;1,;3),\gyoung(;2,;8),
 \gyoung(;5,;4),\gyoung(;6,;9),\gyoung(7,:)\right).
 \end{equation}
 Then both $\Bg_1 $ and $ \Bg_2$ are Garnir tableaux, and it is easy to see that $\Bg_1 \rhd \Bg_2 $ and
 so $\Bg_2$ is not a maximal tableau in $\nstd(\blambda)$ with respect to $\rhd.$

\medskip

\begin{corollary}\label{fundamental-garnir1}
  Let $\bT$ be a $\blambda$-tableau which is non-standard.
  Then there exists a Garnir tableau $\Bg$ and a $w\in \Si_n$
  such that $\bT=\Bg w$ and $l(d(\bT))=l(d(\Bg))+l(w)$.
\end{corollary}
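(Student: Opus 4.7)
The plan is to build $\Bg$ from $\bT$ by iterating upward moves in the weak order $\succ$, while staying inside $\nstd(\blambda)$ for as long as this is possible. Concretely, I would set $\bT_0:=\bT$ and, at each step $j\ge 0$, choose (if it exists) a simple transposition $s_{i_{j+1}}\in S$ such that $\bT_{j+1}:=\bT_j s_{i_{j+1}}$ satisfies both $\bT_{j+1}\rhd \bT_j$ and $\bT_{j+1}\in\nstd(\blambda)$. Since $\lhd$ is a strict partial order on the finite set $\tab(\blambda)$, the procedure terminates after finitely many steps at some $\bT_k=:\Bg$ for which no such $s$ exists. The terminal tableau $\Bg$ is then ``maximal in $\nstd(\blambda)$ with respect to $\succ$'' in exactly the sense used in the proof of Lemma \ref{lemma garnir1}(a), so that lemma tells us $\Bg$ is indeed a Garnir tableau.

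Next, setting $w:=s_{i_k}s_{i_{k-1}}\cdots s_{i_1}\in\Si_n$, the construction gives $\Bg=\bT s_{i_1}\cdots s_{i_k}$, i.e.\ $\bT=\Bg w$. To verify the length identity, I observe that for each $j$ the right action of $\Si_n$ on $\tab(\blambda)$ yields $d(\bT_{j+1})=d(\bT_j)s_{i_{j+1}}$, so $l(d(\bT_j))$ and $l(d(\bT_{j+1}))$ differ by exactly one. Since $\bT_{j+1}\rhd \bT_j$, Theorem \ref{Teo-Ehr} together with the paper's convention that $1\in\Si_n$ is the \emph{largest} element in the Bruhat order forces $d(\bT_{j+1})$ to be shorter than $d(\bT_j)$, so $l(d(\bT_{j+1}))=l(d(\bT_j))-1$. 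Telescoping over $j=0,\dots,k-1$ gives $l(d(\Bg))=l(d(\bT))-k$.

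Finally, since $w$ is expressed as a product of $k$ simple reflections we have $l(w)\le k$; and the identity $d(\bT)=d(\Bg)w$ yields
\[
l(d(\bT))\le l(d(\Bg))+l(w)=l(d(\bT))-k+l(w),
\]
so $l(w)\ge k$. Hence $l(w)=k$ and therefore $l(d(\bT))=l(d(\Bg))+l(w)$, as required.

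The main obstacle is the identification of the terminal tableau $\Bg$ with a Garnir tableau, which is precisely the content of Lemma \ref{lemma garnir1}(a) and in particular hinges on its somewhat non-standard working definition of ``maximal in $\nstd(\blambda)$''; beyond that, the argument is routine combinatorial bookkeeping together with one direct appeal to Ehresmann's Theorem (Theorem \ref{Teo-Ehr}).
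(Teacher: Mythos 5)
Your argument is correct and is essentially the paper's own (the paper simply cites part $a)$ of Lemma \ref{lemma garnir1}, leaving the details implicit): you climb in the weak order inside $\nstd(\blambda)$ until you reach a maximal element, identify it as a Garnir tableau via Lemma \ref{lemma garnir1}$a)$, and obtain the length additivity from Theorem \ref{Teo-Ehr} together with the subadditivity of $l$. Your bookkeeping of the Bruhat-order convention and the telescoping of lengths is accurate, so nothing further is needed.
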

\begin{proof}
This is a consequence of $ a) $ of Lemma \ref{lemma garnir1}. 
\end{proof}  


Let us now give our characterization of Garnir tableaux.
\begin{lemma}\label{description garnir}
  Given a multipartition $\blambda$ of $n$ and let $\Bg$ be a $\blambda$-tableau. Then 
 $\Bg$ is a Garnir tableau if and only if there is a node $\gamma \in[\blambda]$, not belonging to
  the first row, and an $ i_0 \in \bn $ such that
\begin{itemize}
\setlength\itemsep{-1.1em}
\item[(1)]
 $\Bg(i_0) = \gamma $ and $\Bg(i_0+1) =\gamma^+$. \\
\item[(2)] For all $ i \neq i_0 $ we have $ \Bg(i) \rhd \Bg(i+1) $. \\
\item[(3)] For all $ i \in \bn \setminus  \bn_{\Snake(\gamma)} $ we have that $ \Bg(i) = \bT^{\blambda}(i)$.
\end{itemize}
\end{lemma}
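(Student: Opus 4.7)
The plan is to work throughout with the identification $\tau_k := \bT^{\blambda}(k)$, so that $\tau_1 \rhd \tau_2 \rhd \cdots \rhd \tau_n$ is the total order $\lhd$ on the nodes of $[\blambda]$ and $\bT^{\blambda}$ reverses it. For the node $\gamma=(r,1,m)$ appearing in the statement, I let $j^+$ and $j_\gamma$ be the unique indices with $\bT^{\blambda}(j^+)=\gamma^+$ and $\bT^{\blambda}(j_\gamma)=\gamma$. The preliminary step is the observation that $\Snake(\gamma)=[\gamma,\gamma^+]$ is an interval in the total order, so $\bn_{\Snake(\gamma)}$ is the contiguous integer interval $\{j^+,j^++1,\ldots,j_\gamma\}$; moreover, a direct inspection of $\lhd_0$ shows that, apart from $\gamma$ and $\gamma^+$, the snake consists of the row-$(r-1)$ nodes strictly to the right of $\gamma^+$ together with the row-$r$ nodes strictly to the left of $\gamma$. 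In particular, in any column $m'\ne m$ the snake contains at most one node, which lies in row $r-1$ if $m'>m$ and in row $r$ if $m'<m$. This is the structural fact that drives both directions of the proof.

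For the forward direction I take $\Bg$ to be a Garnir tableau with index $i_0$ and set $\gamma := \Bg(i_0)$. Since $\Bg s_{i_0}$ is standard while $\Bg$ itself is not, the swap of the entries $i_0$ and $i_0+1$ must repair exactly one column-violation, and this forces these entries to be vertically adjacent in the same column with $i_0+1$ on top; this gives (1). Property (b) of the Garnir definition together with Lemma \ref{t-lamb-max}(a) gives (2) directly. For (3), I use that (2) splits the sequence $\Bg(1),\ldots,\Bg(n)$ into two $\lhd$-decreasing runs, the first ending at $\gamma=\tau_{j_\gamma}$ and the second starting at $\gamma^+=\tau_{j^+}$; since every $\tau_k$ with $k<j^+$ satisfies $\tau_k\rhd\gamma^+$, such a $\tau_k$ cannot lie in the second run, and therefore the $\tau_k$'s with $k<j^+$ fill the first $j^+-1$ slots of the first run in decreasing order, giving $\Bg(k)=\tau_k$ for $k<j^+$. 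A symmetric argument yields $\Bg(k)=\tau_k$ for $k>j_\gamma$, and together with $\bn\setminus\bn_{\Snake(\gamma)}=\{1,\ldots,j^+-1\}\cup\{j_\gamma+1,\ldots,n\}$ this is exactly (3).

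For the backward direction I assume (1), (2), (3). Part (b) of the Garnir definition is immediate from (2) via Lemma \ref{t-lamb-max}(a), and the non-standardness of $\Bg$ in (a) is immediate from (1). To show that $\Bg s_{i_0}$ is standard I check each column separately. In column $m$, (3) makes the positions other than $\gamma$ and $\gamma^+$ carry the $\bT^{\blambda}$-values, which are strictly less than $j^+$ in rows above $\gamma^+$ and strictly greater than $j_\gamma$ in rows below $\gamma$; since $i_0,i_0+1\in\{j^+,\ldots,j_\gamma\}$, placing $i_0$ at $\gamma^+$ and $i_0+1$ at $\gamma$ preserves the strict increase. In any column $m'\ne m$, the snake meets the column in at most one node $\tau^*$, the other positions carry $\bT^{\blambda}$-entries by (3), and the entry $k$ at $\tau^*$ lies in $\{j^+,\ldots,j_\gamma\}$; a short row-counting argument, using that each row of $\bT^{\blambda}$ is filled by a contiguous block of consecutive integers, shows that $k$ fits strictly between the $\bT^{\blambda}$-entries immediately above and below $\tau^*$, the two sub-cases $m'>m$ and $m'<m$ being handled symmetrically.

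The subtle point, and the part I expect to require the most care, is precisely this column-$m'$ verification in the backward direction: the entry $k$ at the snake-node $\tau^*$ is not pinned down by (1)--(3) (indeed, several distinct Garnir tableaux can share the same $\gamma$ and $i_0$), so one must argue that \emph{every} admissible $k\in\{j^+,\ldots,j_\gamma\}$ slots correctly into column $m'$, which forces a careful case split on the exact row position of $\tau^*$ relative to $\gamma$ and $\gamma^+$.
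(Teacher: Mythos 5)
Your argument is correct and is essentially the paper's: conditions (1) and (2) are obtained in the same way (the swap of $i_0,i_0+1$ must repair the unique column violation, and Garnir condition (b) combined with Lemma \ref{t-lamb-max}(a)), while your two-decreasing-runs derivation of (3) is just a rephrasing of the paper's observation that $i_0$ is the only simple inversion of $\Bg$. The only divergence is that the paper dismisses the converse implication as clear, whereas you verify it; the column check you flag as delicate does go through exactly as you sketch, since in any column other than that of $\gamma$ the unique snake node has its upper neighbour of $\bT^{\blambda}$-index strictly less than $j^{+}$ and its lower neighbour (if present) of index strictly greater than $j_{\gamma}$, so every entry from $\{j^{+},\dots,j_{\gamma}\}$ is admissible there.
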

\begin{proof}
  Suppose first that $\Bg$ is a Garnir tableau. Then $\Bg$ is not standard and maximal with respect to $ \prec$ 
  and hence there is an $ i_0 \in\bn$ such that $\Bg s_{i_0}$ is standard.
  The entries $i_0 $ and $ i_0+1$ belong to the same component (column) of $[\blambda]$
  and $\Bg(i_0+1)\rhd \Bg(i_0)$. Let $\gamma=\Bg(i_0+1)$ and $\beta=\Bg(i_0)$.
  Suppose that $\beta^+ \neq \gamma$ and choose $ a \in \bn $ such that $ \Bg(a)=\beta^+$.
  Then $\gamma\rhd\beta^+$ and since $\Bg s_{i_0}$ is standard we have that $i_0<a<i_0+1,$ a contradiction.
  Therefore $\beta=\gamma^+$ and
  by definition $\Bg(i_0)^+=\Bg(i_0+1)$.

  Since $\Bg$ is a Garnir tableaux, we have for $i\neq i_0$ that $\Bg\rhd\Bg s_i$
  and then $\Bg(i)\rhd\Bg(i+1)$, see $a$) of Lemma \ref{t-lamb-max}.

  Let us say that $ i \in \bn $ defines a \emph{simple non-inversion} if $ \Bg(i) \rhd \Bg(i+1) $ and that 
$ i \in \bn $ defines a \emph{simple inversion} if $ \Bg(i) \lhd \Bg(i+1) $. With this terminology we 
have so far proved that $ i_0 $ is the only simple inversion of $ \bn$, all other elements are simple non-inversions.

  Let $k_0=\min(\Bg^{-1}{(\Snake(\gamma)})) $ and $k_1=\max(\Bg^{-1}({\Snake(\gamma)})) $. Since $ i_0 $ is the only inversion of $ \bn $ we have that 
  $ k_0-1  $ appears before $ k_0 $ in $ \Bg$ whereas
  $ k_0-2  $ appears before $ k_0-1 $ and so on until $1$. On the other hand, no $ j > k_0  $ can appear before $ k_0 $ in $ \Bg$, since 
for the smallest such $ j $ we would have that $ j -1 $ is a inversion distinct from $ i_0$.
We have thus showed that for $ i = 1,2, \ldots, k_0 -1 $ we have that $ \Bg(i) = \bT^{\blambda}(i) $.
Similarly, one shows that also for $ i = k_1 +1, k_1 +2,\ldots, n  $ we have that $ \Bg(i) = \bT^{\blambda}(i) $.
Thus we have that $ \Bg^{-1}{(\Snake(\gamma)}) = \bn_{\Snake(\gamma)} $ and that 
$ \Bg$ verifies the conditions (1), (2) and (3) of the Lemma.

Finally, if $ \Bg $ is a $ \blambda$-tableau verifying the conditions (1), (2) and (3) of the Lemma, 
then clearly $ \Bg $ is a Garnir tableau.
\end{proof}

For the next Lemma we need condition $ iii) $
from Definition \ref{strongadj} of strong adjacency-freeness.

\begin{corollary}\label{garnireq}
Let $ \blambda $ be a multipartition and let $ \gamma \in [\blambda]$. 
Suppose that $ \Bg_1 $ and $ \Bg_1 $ are Garnir tableaux of the same shape $ \blambda$ 
with respect to the same $ \gamma $ as in part $ (1) $ of the previous Lemma
\ref{description garnir}. Then $ e(\bi^{\Bg_1}) \sim e(\bi^{\Bg_2}) $.
\end{corollary}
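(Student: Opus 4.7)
The plan is to compare the residue sequences $\bi^{\Bg_1}$ and $\bi^{\Bg_2}$ directly and exhibit a sequence of free moves converting one into the other; once this is done the conclusion $e(\bi^{\Bg_1}) \sim e(\bi^{\Bg_2})$ follows immediately from the definition of $\sim$ recalled just before Lemma~\ref{uppertriangularI}.

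I will first invoke Lemma~\ref{description garnir} to unpack what the two tableaux look like. Outside the snake positions $\bn_{\Snake(\gamma)}$ both $\Bg_1$ and $\Bg_2$ coincide with $\bT^{\blambda}$, so $\bi^{\Bg_1}$ and $\bi^{\Bg_2}$ automatically agree there. At the snake positions they are permutations of the same multiset of residues, namely the residues of the nodes of $\Snake(\gamma)$, and both have the pair $(A, A+1) := (\res(\gamma), \res(\gamma^+))$ occupying two consecutive positions, say $i_0^{(1)}, i_0^{(1)}+1$ for $\Bg_1$ and $i_0^{(2)}, i_0^{(2)}+1$ for $\Bg_2$. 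The task therefore reduces to transforming one arrangement of the snake residues into the other by free swaps performed entirely within the snake block.

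The main technical ingredient, which I expect to be the chief obstacle, is the following claim: every two residues in $\Snake(\gamma) \setminus \{\gamma, \gamma^+\}$ are pairwise non-adjacent modulo $e$, and each of them is non-adjacent to both $A$ and $A+1$. The residues in $\Snake(\gamma) \setminus \{\gamma, \gamma^+\}$ fall into two families: residues of the form $\kappa_{m'} + 1 - r$ (with $m' < m$, lying in the row of $\gamma$) and residues of the form $\kappa_{m'} + 2 - r$ (with $m' > m$, lying in the row of $\gamma^+$). Non-adjacency within each family is immediate from condition (ii) of strong adjacency-freeness. Non-adjacency between the two families, and non-adjacency of a row-$r$ residue to $A+1$ and of a row-$(r-1)$ residue to $A$, all reduce to ruling out congruences of the form $\kappa_{m_1} \equiv \kappa_{m_2} \pm 2 \pmod{e}$ for some pair $m_1 < m_2$. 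Condition (iv) together with the fact that consecutive $\kappa_i$ differ by at least $2$ forces such a congruence to come from a wraparound $\kappa_{m_2} - \kappa_{m_1} = e - 2$, and a short case analysis pins the only possibility down to $m_1 = 1$ and $m_2 = l$, which is exactly what condition (iii) excludes. This is the precise point at which the extra hypotheses (iii) and (iv) of strong adjacency-freeness are essential.

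Granted this claim, the rest is routine. Any two distinct residues in $\Snake(\gamma) \setminus \{\gamma, \gamma^+\}$ can be swapped by a single free move, and the pair $(A, A+1)$ can be slid past any such residue $\alpha$ by two consecutive free moves (first swap $\alpha$ with $A$, then $\alpha$ with $A+1$), since the non-adjacency claim gives $\alpha \notin \{A-1, A, A+1, A+2\}$. Concretely, I will first slide the block $(A, A+1)$ from its position in $\bi^{\Bg_1}$ to the positions $i_0^{(2)}, i_0^{(2)}+1$ it occupies in $\bi^{\Bg_2}$, and then permute the remaining snake residues into the arrangement prescribed by $\Bg_2$. The resulting chain of free moves realises $\bi^{\Bg_1} \sim \bi^{\Bg_2}$, and hence $e(\bi^{\Bg_1}) \sim e(\bi^{\Bg_2})$, as required.
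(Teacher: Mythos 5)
Your proposal is correct, and its backbone coincides with the paper's: both arguments work entirely inside the contiguous block of positions $\bn_{\Snake(\gamma)}$ (outside it the two residue sequences agree, by part (3) of Lemma \ref{description garnir}), both rest on the observation that the only adjacent pair of residues among the snake nodes is $(\res(\gamma),\res(\gamma^+))=(A,A+1)$, and both then join $\bi^{\Bg_1}$ to $\bi^{\Bg_2}$ by a chain of free moves that never swaps $A$ with $A+1$. The execution differs in two ways that are worth noting. First, the paper routes through the intermediate tableau $\Bg_{clas,\gamma}$, encoding the snake filling as a one-row tableau $\T(\Bg)$ and using a reduced expression for the permutation sorting $\T(\Bg)$ into $\T^{\g}$, together with the (unproved) assertion that no step of that expression interchanges the entries sitting at $\gamma$ and $\gamma^{+}$; you instead slide the $(A,A+1)$ block directly to its target position and then sort the remaining residues, which makes reachability explicit and avoids any appeal to properties of reduced words. (Only bookkeeping point: a residue may have to change sides of the block, and each such crossing displaces the block by one, so a final free re-slide is needed; your two-free-swap crossing trick supplies exactly the required moves.) Second, you spell out the residue-separation claim that the paper merely asserts, isolating where conditions $iii)$ and $iv)$ of strong adjacency-freeness enter, which is indeed the point the paper flags just before the corollary. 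One small imprecision there: the only congruences that genuinely need excluding are $\kappa_{m_1}\equiv\kappa_{m_2}+2 \ (\mathrm{mod}\ e)$ with $m_1<m_2$ --- a cross-row pair with components $m_1<m_2$ has residue difference $\kappa_{m_1}-\kappa_{m_2}-1$, so after condition $ii)$ only this case threatens adjacency --- and your wraparound analysis ($\kappa_{m_2}-\kappa_{m_1}=e-2$, forcing $m_1=1$, $m_2=l$, excluded by $iii)$) is exactly right for it; the ``$-2$'' variant you also list cannot in general be excluded (e.g.\ $\kappa=(0,2,4,7)$ has $\kappa_2=\kappa_1+2$), but it never produces an adjacency, so nothing in your argument is lost.
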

\begin{proof}
  It is enough to prove that for any Garnir tableau $\Bg= \Bg_1$, satisfying the conditions of the Corollary,
  we have that $ \Bg_1 \sim \Bg_{clas, \gamma} $. Let $ \g$ be the one line (ordinary) partition
  $ \g = ( |\bn_{Snake(\gamma)} | ) $. Then we can view $  \Bg \! \mid_{\bn_{Snake(\gamma)}}  $
  as a $ \g$-tableau $ \T(\Bg) $ by reading the numbers in $ Snake(\gamma)$ from left to right.
 The Garnir tableaux from (\ref{theconverse}) correspond for example to the $ \g$-tableaux
\begin{equation}\label{correspondforexample}
  \T(\Bg_1)  =\gyoung(;7;8;<\color{red}4>;<\color{red}5>;6;3) \, , \, \, \,\, \, \,\, \, \,\, \, \, \T(\Bg_2) =
  \gyoung(;3;8;<\color{red}4>;<\color{red}5>;6;7) 
\end{equation}
where $ \g = (6) $, whereas $ \Bg_{clas, \gamma} $ in general corresponds to
$ \T^{\g} $ (on the numbers $ \bn_{Snake(\gamma)} $), that is  
\begin{equation}
 \T^{\g} =  \gyoung(;3;4;<\color{red}5>;<\color{red}6>;7;8)
\end{equation}
in this case.
Since $ \hat{\kappa} $ is strongly adjacency free, we have on the other hand that
the residues of all of the nodes of $ Snake(\gamma) $, except $ \gamma $ and $ \gamma^+ $, differ by
$2$ or more. Let now $ w \in  \Si_n$ be such that $ \T(\Bg)w =  \T^{\g} $ and choose 
a reduced expression $ w = s_{i_1 } \cdots  s_{i_N } $ for $ w$. 
Then, for all $ j $, we have that $ s_{i_{j+1} } $ does not interchange the numbers appearing in the nodes
corresponding to $ \gamma $ and $ \gamma^{+} $ in $ \T_j := \T(\Bg) s_{i_1 } \cdots  s_{i_j }$.
For example, for $ \T(\Bg_1) $ in (\ref{correspondforexample})
the sequence $  s_{i_1 }, \ldots,   s_{i_N } $ never interchanges two numbers in the positions colored with
red, and similarly for $ \T(\Bg_2) $. The Corollary follows from this.
\end{proof}

We have the following Lemma. 
\begin{lemma}\label{firstMain}
  Suppose that $ \blambda \in \OnePar $ and that $ \Bs, \bT \in \tab(\blambda) $.
  If $ \bT \in \nstd(\blambda) $ then there is an expansion
  \begin{equation}\label{upexpansiona}
    m_{\Bs \bT} = \sum_{\bT_1 \in \std(\blambda),  \bT_1 \rhd \bT,  } c_{ \Bs \bT_1} m_{\Bs \bT_1}  +
    \sum_{ \bmu > \blambda, \Bs_2, \bT_2 \in \std(\bmu)} c_{\Bs_2 \bT_2} m_{\Bs_2 \bT_2} 
  \end{equation}
where $c_{ \Bs \bT_1}, c_{\Bs_2 \bT_2} \in \F $. A similar statement holds for $ \Bs$.
\end{lemma}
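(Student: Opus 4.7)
\textit{Proof plan.} I would prove the lemma by downward induction on $\bT$ with respect to $\rhd$ restricted to $\nstd(\blambda)$. The $\rhd$-maximal elements of $\nstd(\blambda)$ are, by Lemma~\ref{lemma garnir1}(b), all Garnir tableaux; these form the base case. For the inductive step, any non-Garnir $\bT\in\nstd(\blambda)$ admits by Corollary~\ref{fundamental-garnir1} a factorisation $\bT=\Bg w$ with $\Bg$ Garnir and $l(d(\bT))=l(d(\Bg))+l(w)$, and in particular there exists $s_k\in S$ with $\bT s_k\rhd\bT$, $\bT s_k\in\nstd(\blambda)$, and $l(d(\bT))=l(d(\bT s_k))+1$ by Lemma~\ref{t-lamb-max}(a). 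Writing $\psi_{d(\bT)}=\psi_{d(\bT s_k)}\psi_k$ modulo the controlled error of (\ref{official}), and sliding each resulting $y^{\underline k}$ past $e(\bi^\blambda)$ using Lemma~\ref{uppertriangularI}, gives
\begin{equation*}
m_{\Bs\bT}=m_{\Bs,\bT s_k}\,\psi_k+\sum_{\bT_1\in\tab(\blambda),\,\bT_1\rhd\bT}a_{\bT_1}\,m_{\Bs\bT_1}+\sum_{\bmu>\blambda}R_{\bmu},
\end{equation*}
with each $R_\bmu$ factorising through $e(\bi^\bmu)$. Lemma~\ref{westart} puts the $R_\bmu$ into the form $\sum c\,m_{\Bs_2\bT_2}$ with $\Bs_2,\bT_2\in\tab(\bmu)$, and the inductive hypothesis, applied to $\bT s_k$, to each $\bT_1$, and (for the new levels $\bmu>\blambda$) by an outer downward induction on the total order $>$ of (\ref{lexiorderpar}), drives everything into $\std$ form; the residual occurrences of $\psi_k$ on the right of $m_{\Bs,\bT s_k}\,\psi_k$ or $m_{\Bs\bT_1}\,\psi_k$ are absorbed by recognising $\psi_{d(\bT')}\,\psi_k=\psi_{d(\bT' s_k)}$ modulo the same kind of controlled error, for the $\bT'$ at hand.

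For the Garnir base case, fix $\Bg$ with data $\gamma\in[\blambda]$, $i_0\in\bn$ as in Lemma~\ref{description garnir}, so $\Bg(i_0)=\gamma$ and $\Bg(i_0+1)=\gamma^+$ while $\Bg$ agrees with $\bT^\blambda$ outside $\Snake(\gamma)$. Since $\gamma^+$ lies directly above $\gamma$, the residues of $\bi^\Bg$ at the two positions form the consecutive pair $A:=\textrm{res}(\gamma)$ and $A+1$. By Corollary~\ref{garnireq} I may replace $\Bg$ by the canonical representative $\tilde{\Bg}_\gamma$ of (\ref{tildeGarnir}) to pin down the location of the bad crossing. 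Writing $\psi_{d(\Bg)}=\psi_{d(\Bg s_{i_0})}\psi_{i_0}$ modulo the usual controlled error and rewriting the rightmost $\psi_{i_0}$ sitting on the $A,A+1$ crossing via relations (\ref{eq10}) and (\ref{eq11}), I expect each summand to fall into one of the two target classes: the dot-producing term $(y_{r+1}-y_r)e(\bi)$ of (\ref{eq10}) is absorbed by Lemma~\ref{uppertriangularI} into a combination of elements factorising through $e(\bi^\bmu)$ with $\bmu>\blambda$; the swapped triple $\psi_{r+1}\psi_r\psi_{r+1}$ and the $\pm 1$ corrections from (\ref{eq11}) correspond to $\blambda$-tableaux $\bT_1$ whose Coxeter length is strictly smaller, hence with $\bT_1\rhd\Bg$ by Theorem~\ref{Teo-Ehr}. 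A final application of Lemma~\ref{westart} and the inductive hypothesis cleans everything up into the required standard shape. The symmetric assertion for $\Bs$ follows at once by applying the antiinvolution $\ast$, which exchanges $m_{\Bs\bT}$ with $m_{\bT\Bs}$.

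The main obstacle I anticipate is the detailed bookkeeping at the $A,A+1$ crossing in the Garnir step: relations (\ref{eq10}) and (\ref{eq11}) produce several terms, and for each one must verify, by re-expressing the resulting $\psi$-word as $\psi_{d(\bT_1)}$ for a genuine $\blambda$-tableau $\bT_1$, that either $\bT_1\rhd\Bg$ with $\bT_1$ recoverable in $\std(\blambda)$ via the induction, or that the dots forced onto $e(\bi^\blambda)$ truly shift the element into the stratum of some $\bmu>\blambda$. Here the strong adjacency-free condition (iii) of Definition~\ref{strongadj}, which goes beyond the hypotheses of \cite{LiPl}, should play the essential role: it prevents the appearance of any unintended $A\leftrightarrow A\pm 1$ adjacency in the rest of the snake, so that the free motions used above really are free and no uncontrolled dominance loss is incurred during the manipulations.
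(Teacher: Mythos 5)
Your overall induction scheme (outer induction over the levels via $>$, inner downward induction on $\bT$, Garnir tableaux as base case via Lemma \ref{lemma garnir1} and Corollary \ref{fundamental-garnir1}, error control via (\ref{official}) together with Lemma \ref{base-de-induccion} and Lemma \ref{uppertriangularI}) matches the paper's proof in outline. But there is a genuine gap at the heart of the argument, namely the Garnir base case. Writing $\psi_{d(\Bg)}=\psi_{d(\Bg s_{i_0})}\psi_{i_0}$ and pushing the idempotent through with (\ref{eq4}) gives $m_{\Bs \Bg}=\psi_{d(\Bs)}^{\ast}\psi_{d(\Bg)}e(\bi^{\Bg})$: there is a single crossing $\psi_{i_0}$ on the adjacent residues $A,A+1$, not a square $\psi_{i_0}^2$ and not a braid word, so relations (\ref{eq10}) and (\ref{eq11}) simply do not apply to it, and the dot term $(y_{r+1}-y_r)e(\bi)$ and the $\pm 1$ braid corrections you propose to absorb never arise from this expression. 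What is actually needed (and what the paper proves) is a statement about the idempotent $e(\bi^{\Bg})$ itself: after replacing $\Bg$ by $\tilde{\Bg}_{\gamma}$ via Corollary \ref{garnireq}, one applies Corollary \ref{cor-base-de-induccion} to the restriction of $\tilde{\Bg}_{\gamma}$ up to the entry filling $\gamma$, with $\iota=\textrm{res}(\gamma)$, to get $e(\bi^{\Bg})=0$ when $\blambda=\bmu_n^{max}$, and for general $\blambda$ one uses (\ref{trian2}) and (\ref{trian21}) of Lemma \ref{uppertriangularI} to get $e(\bi^{\Bg})=\sum_{\bmu>\blambda}D_{\bmu}$, which Lemma \ref{westart} and the outer induction then convert into standard $m_{\Bs_2\bT_2}$'s. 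Your sketch cites $\tilde{\Bg}_{\gamma}$ and Corollary \ref{garnireq} but never invokes this idempotent-vanishing/triangularity mechanism, which is precisely the non-classical replacement for the Garnir relations in this setting; without it the base case is unproven.

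Two smaller points. First, your inductive step peels off one simple reflection at a time, which leaves residual factors $\psi_k$ on the right of already-reduced terms $m_{\Bs\bT_1}\psi_k$; when $\bT_1 s_k\lhd\bT_1$ the word $\psi_{d(\bT_1)}\psi_k$ is not reduced and you must again invoke (\ref{eq10}), generating further dots and error terms to be controlled. The paper avoids this bookkeeping by peeling off the whole factor $w$ from $\bT=\Bg w$ at once (choosing the reduced expression $d(\Bg)w$), so that the only corrections are the $y^{\underline{k}}\psi_v$ terms of (\ref{official}) with $v>d(\bT)$, handled by Lemma \ref{base-de-induccion} resp.\ Lemma \ref{uppertriangularI} and Theorem \ref{Teo-Ehr}. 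Second, "strictly smaller Coxeter length" does not by itself give $\bT_1\rhd\Bg$; what Theorem \ref{Teo-Ehr} gives is that Bruhat-smaller subexpressions correspond to dominance-larger tableaux, so you should phrase the comparison via subexpressions, as the paper does.
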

\begin{proof}
We shall argue via downwards induction on $ \blambda $ with respect to $ < $. Let us first consider the case
$ \blambda = \bmu_n^{max}$.
We consider $ m_{\Bs  \bT} $ for  $\Bs, \bT \in \tab(\blambda) $
and suppose that $ \bT \in \nstd(\bmu^{max}) $.
We show using downwards induction on $ \bT $
with respect to $ \lhd $ that 
$  m_{\Bs \bT}  $, {\color{black}{for $ \bT \in  \nstd(\bmu^{max}) $}},
can be written in the form given by (\ref{upexpansiona}).

In view of $ b) $ of Lemma \ref{lemma garnir1} 
the basis step for this induction is given by $ \bT = \Bg $ a Garnir tableau.
Let us do it.
By relation (\ref{eq4}) we have that 
\begin{equation}\label{indstep}
 m_{\Bs \Bg} =   \psi_{ d(\Bs) }^{\ast} e( \bi^{ {max}}) \psi_{ d(\Bg) }  = 
\psi_{ d(\Bs) }^{\ast} \psi_{ d(\Bg) } e( \bi^{\Bg})  
\end{equation}
and so for the basis step to work it is enough to prove that $  e( \bi^{\Bg})  = 0$. 
Let $ \gamma \in [\bmu^{max}]$ be the node associated with $ \Bg $ as in Lemma \ref{description garnir}. 
Using Lemma \ref{garnireq}
we may assume that
\begin{equation}
  e( \bi^{\Bg})  \sim e( \bi^{\tilde{\Bg}_{\gamma}}).
\end{equation}
Let $ j = \tilde{\Bg}_{\gamma}^{-1}(\gamma) $. 
Applying Corollary \ref{cor-base-de-induccion}
to the restriction of $ \tilde{\Bg}_{ \gamma} $ to the numbers $ \{1,2, \ldots, j-1 \} $ and $ \iota = \rm{res}(\gamma)$ we now get that 
$ e( \bi^{\tilde{\Bg}_{\gamma}}) =0 $, and so
also $ e( \bi^{\Bg}) =0 $
which proves the claim in this case.

Let us now consider the case of a general non-standard $ \bmu_n^{max}$-tableau $ \bT$.
Using Corollary \ref{fundamental-garnir1} there exists a Garnir tableau $\Bg$ and a $w\in \Si_n$ such
that $\bT=\Bg w$ and $l(d(\bT))=l(d(\Bg))+l(w)$. 
Hence there exists a reduced expression for $ d(\bT) $ of the form $ d(\bT) = s_{i_i} \cdots s_{i_N} s_{j_i} \cdots s_{j_M} $ where 
$ d(\Bg ) =  s_{i_i} \cdots s_{i_N}  $ and $ w = s_{j_i} \cdots s_{j_M} $. If this reduced expression is the official one
for $ d(\bT) $ 
we have that 
\begin{equation}
  m_{\Bs \bT} =   \psi_{ d(\Bs) }^{\ast}   e( \bi^{ {max}}) \psi_{d( \Bg) }  \psi_{ w }  = 0
\end{equation}
by the inductive basis, proved above.  
If it is not the official expression for $ d(\bT) $ we have 
by (\ref{official}) that the error term that occurs when changing to the
official expression is given by a linear combination of terms of the form
$    y^{\underline{k}} \psi_v $ where $ \underline{k} \in \No^n $ and $ v > d(\bT) $. Now for any non-trivial
factor $ y^{\underline{k}} $ we have that $  e( \bi^{ {max}}) y^{\underline{k}}  $ is zero by Lemma
\ref{base-de-induccion} and for the terms $ \psi_v $ we have by
Theorem \ref{Teo-Ehr} 
that $ v = d(\bT_1) $ with $ \bT_1 \rhd \bT$, and so we may use the inductive hypothesis on the
non-standard $ \bT_1$'s that may occur.

\medskip
Let us now consider a general multipartition $ \blambda \neq \bmu_n^{max} $.
We consider $ m_{\Bs  \bT} $ for  $\Bs \in \tab(\blambda),  \bT \in \nstd(\blambda) $ and
once again use downwards induction on $ \bT $ with respect to $ \lhd $ to show that 
$  m_{\Bs \bT}  $, {\color{black}{for $ \bT \in  \nstd(\bmu^{max}) $}},
can be written in the form given by (\ref{upexpansion}).
For $ \bT  $ maximal in $  \nstd(\blambda) $ we have that 
$ \bT = \Bg $ is a Garnir tableau for $ \blambda$ 
and so, arguing the same way as we did for
(\ref{indstep}), we get
\begin{equation}\label{indstep1}
 m_{\Bs \Bg} =   \psi_{ d(\Bs) }^{\ast} e( \bi^{ {max}}) \psi_{ d(\Bg) }  = 
\psi_{ d(\Bs) }^{\ast} \psi_{ d(\Bg) } e( \bi^{\Bg}).
\end{equation}
Passing to $ \tilde{\Bg}_{\gamma}$ as we did get in the inductive basis case, and 
using 
({\ref{trian2}}) and 
({\ref{trian21}}) of Lemma \ref{uppertriangularI},
we then get 
\begin{equation}
  m_{\Bs \Bg} = \sum_{\bmu > \blambda} D_{\bmu} =
   \sum_{\Bs, \bT \in \tab(\bmu), \, \bmu > \blambda}   c_{\Bs \bT} m_{\Bs \bT} 
  \end{equation}
where we used Lemma \ref{westart} for the second equality.
We then use the inductive hypothesis on each appearing $ m_{\Bs \bT}  $,
to rewrite in terms of $ m_{\Bs_1 \bT_1}  $ for $ \Bs_1 $ and $ \bT_1 $ standard tableaux. This concludes 
the case $ \bT = \Bg$.

Finally, for the general non-standard $ \blambda$-tableau $ \bT$ we have that
\begin{equation}
  m_{\Bs \bT} =   \psi_{ d(\Bs) }^{\ast}   e( \bi^{ \blambda}) \psi_{d( \Bg) }  \psi_{ w } =
   \sum_{\bT_1 \in \std(\blambda),  \bT_1  \rhd \bT}   c_{\Bs \bT} m_{\Bs \bT_1} + \sum_{\bmu > \blambda} D_{\bmu} 
\end{equation}
where the second equality arises from the error terms $ \psi_{ d(\Bs) }^{\ast} e(\bi^{\blambda}) y^{\underline{k}} \psi_v$.
But as before we can apply the induction hypothesis on each $ D_{\bmu} $ rewriting 
it in terms of $ m_{\Bs_1 \bT_1}  $ where $ \Bs_1 $ and $ \bT_1 $ are standard tableaux. This concludes 
the general $ \bT $-case. Finally the $ \Bs $-case follows from the $ \bT$-case by applying
$ \ast$ and so the Lemma is proved.
\end{proof}

From the Lemma we deduce the following Corollary. It is the main result of this section.

\begin{corollary}
The subset $ \Basis $ of $ \B $ given by 
\begin{equation}\label{stdgen}
\Basis := \{ m_{\Bs \bT} \mid \blambda \in \OnePar, \, \Bs, \bT \in \std(\blambda) \}
\end{equation}
spans $ \B $ over $ \F$.  
\end{corollary}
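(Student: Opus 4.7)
The plan is to combine the previous two lemmas with a straightforward double induction. By Lemma \ref{westart2}, the set $\{m_{\Bs\bT}\mid \blambda\in\OnePar,\ \Bs,\bT\in\tab(\blambda)\}$ already spans $\B$ over $\F$. Hence it is enough to show that every $m_{\Bs\bT}$ in this larger set, with $\Bs$ or $\bT$ possibly non-standard, can be rewritten as an $\F$-linear combination of elements of $\Basis$. I would do this by downwards induction on the shape $\blambda$ with respect to the total order $<$ of (\ref{lexiorderpar}), using the fact that $\OnePar$ is finite so the maximal shape $\bmu_n^{max}$ provides the induction base.

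Fix a shape $\blambda\in\OnePar$ and assume the claim has been established for all $\bmu\in\OnePar$ with $\bmu>\blambda$. For $m_{\Bs\bT}$ with $\Bs,\bT\in\tab(\blambda)$, I would first treat the case $\bT\in\nstd(\blambda)$ by a second (inner) downwards induction with respect to $\lhd$ on $\tab(\blambda)$; applying Lemma \ref{firstMain} directly gives
\begin{equation}
m_{\Bs\bT}=\sum_{\bT_1\in\std(\blambda),\ \bT_1\rhd\bT}c_{\Bs\bT_1}\,m_{\Bs\bT_1}+\sum_{\bmu>\blambda,\ \Bs_2,\bT_2\in\std(\bmu)}c_{\Bs_2\bT_2}\,m_{\Bs_2\bT_2}.
\end{equation}
The second sum is already in $\Basis$ by the outer inductive hypothesis. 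Each $m_{\Bs\bT_1}$ in the first sum has $\bT_1$ standard, so only the possible non-standardness of $\Bs$ remains; applying the $\Bs$-version of Lemma \ref{firstMain} (obtained from the $\bT$-version by the antiinvolution $\ast$) to each such term reduces the problem to terms where both tableaux are standard, modulo terms of strictly larger shape handled by the outer induction.

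Since there are only finitely many standard $\blambda$-tableaux and $\lhd$ is a partial order on this finite set, the inner induction terminates; since $\OnePar$ is finite and $<$ is a total order, the outer induction terminates as well. The combined output is an expansion of $m_{\Bs\bT}$ purely in terms of $\Basis$, which proves the Corollary.

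I do not expect a substantial obstacle here: the two non-trivial ingredients, namely the upper-triangular rewriting of non-standard tableaux (Lemma \ref{firstMain}) and the spanning property of the larger set of tableau-indexed monomials (Lemma \ref{westart2}), are already in place. The only point requiring minor care is the bookkeeping ensuring that the double induction indeed terminates, i.e.\ that neither the shape nor the tableau parameter can cycle; this is immediate from the finiteness of $\OnePar$ and of $\tab(\blambda)$ together with the strictness of the inequalities $\bmu>\blambda$ and $\bT_1\rhd\bT$ appearing in the rewriting.
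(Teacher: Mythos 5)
Your proposal is correct and follows essentially the same route as the paper, which deduces the Corollary directly from the spanning statement for the full tableau-indexed set together with Lemma \ref{firstMain}. The only remark is that your induction bookkeeping is more elaborate than necessary: since the second sum in Lemma \ref{firstMain} already consists of $m_{\Bs_2\bT_2}$ with \emph{both} tableaux standard, a single application of the $\bT$-version followed by the $\Bs$-version (which preserves the now-standard second index) already lands in $\Basis$, with no inner or outer induction required.
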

\begin{proof}
 This is a consequence of Lemma \ref{westart} and Lemma \ref{firstMain}.
\end{proof}
  
\section{Linear Independence of $ \Basis$. }
In this section we show that the set $ \Basis $ constructed in (\ref{basisdef}) is a linearly independent set.
Our methods used so far, essentially being manipulations with
the defining relations for $ \B$, are not sufficient for proving this and in fact it cannot even be proved that 
$ m_{\Bs \bT} $ is non-zero with these methods.

\medskip
To show the linear independence of $ \Basis $ we shall rely on the seminal work by Brundan-Kleshchev and
Rouquier, see \cite{brundan-klesc}, \cite{Rouq}
that establishes an isomorphism between the cyclotomic KLR-algebra $ \R$ and
the cyclotomic Hecke algebra $ \HH$.

\medskip
Let us give the precise definition of the relevant cyclotomic Hecke algebra.
\begin{definition}\label{hecke algebra}
  Let $\F$, $e$ and $ \hat{\kappa} \in {\mathbb Z}^l $ be as above, and let $ q \in  \F \setminus \{ 1 \}  $
be an $e \!$'th primitive root of unity. 
The \emph{cyclotomic Hecke algebra} $ \HH(q, \kappa)$ is 
the $\F$-algebra with generators $L _1 ,\ldots,L_n ,$ $T_1 ,\ldots ,T _{n-1}$ and relations
\begin{equation}
(L _1 - q^{\kappa_1}) \cdots (L _1 -q^{\kappa_l}) = 0
\end{equation}
\begin{equation}\label{quadratic}
  (T_r  + 1) (T_r - q)  = 0
\end{equation}
\begin{equation}\label{braid}  
  T _s  T_{s+1} T_s  = T _{s+1} T_s T_{s+1}
\end{equation}
\begin{equation}\label{jm}
L_rL_s=L_sL_r, \,  T_r L_r   = L_{r+1} (T_r - q + 1)
\end{equation}

\begin{equation}
T_r L_s  = L_s T_r    \mbox{ if }  |r - s| > 1 \mbox{ and } 
T_r T_s     = T_s T_r    \mbox{ if }  s \neq r,r + 1
\end{equation}
for all admissible $r, s$.
\end{definition}
It follows from the relations that there is antiinvolution $ \ast$ of $ \HH$, fixing the generators
$ T_i $ and $ L_i$.
We have that $T_r$ is invertible with $T_r^{-1} = q^{-1}(T_r -q +1 )$.
From this one gets that
\begin{equation} \label{jucysmurphy}
L_{r+1} = q^{-1} T_r L_r T_r
\end{equation}
and so $ L_2, \ldots, L_n $ are actually redundant for generating $ \HH$.
The elements $ L_i $ are called \emph{Jucys-Murphy elements} for $ \HH$.

\medskip
Let $ \q $ be a variable and 
let $ \K $ be the quotient field of the polynomial ring $ \F[\q]$. 
Let $ \OO $ be the subring of $ \K$ given by $ \OO := \{ \frac{ f(\q)}{g(\q)} \mid f(\q), g(\q) \in \F[\q],   g(q) \neq 0 \} $.
Then $ \OO $ is a local ring with maximal ideal $ \m := (\q-q) = \{ \frac{ f(\q)}{g(\q)} \in \OO \mid    f(q) = 0 \} $. The evaluation map $ \OO \rightarrow \F, \frac{ f(\q)}{g(\q)} \mapsto \frac{ f(q)}{g(q)} $ induces
an isomorphism $ \OO/\m \cong \F$ and so the triple $ (\OO, \F, \K) $ is a modular system.

\medskip
Let $\HHO= \HHO(\q, \kappa) $ be the $ \OO$-algebra given by the same presentation used
for $ \HH$, but replacing $ q $ by $ \q \in \OO$, and let similarly
$\HHK= \HHK(\q, \kappa) $ be the $ \K$-algebra given by the same presentation used
for $ \HH$, but replacing $ q $ by $ \q \in \K$. It is known that $ \HHO$ is free over $ \OO $ of rank $ l^n n!$.
Furthermore, we have that $ \HHO \otimes_{\OO} \F \cong \HH $ where $ \F $ is made
into an $ \OO$-algebra via evaluation in $ q$, and that
$ \HHO \otimes_{\OO} \K \cong \HHK $, via extension of scalars. It follows that
$ \HH $ and $ \HHK$ both have dimension $ l^n n!$.

\medskip
The representation theory of $ \HH $ is governed by $ \MP$, that is $l$-multipartitions of $ n$.
Let $ \blambda $ be an element of $ \MP $ and let $ \Bs \in \tab(\blambda) $.
Then we define
the content function of $ \Bs $ via the formula
\begin{equation}\label{contentF}  c_{\Bs}(i) = q^{\textrm{res}(\Bs(i))}  \in \F
\end{equation}  
where $ \textrm{res} $ is
as in (\ref{thenwedefinetheresidue}). 
Note that since $ q $ is an $e$'th primitive root of unity, this makes sense.
The content function for $ \HHO $ and $ \HHK$ is defined via 
\begin{equation}\label{content}
    c_{\Bs}^{\OO}(i) = c_{\Bs}^{\K}(i) = \q^{\hat{\kappa}_k +c-r} \in \OO \subseteq \K
\end{equation}
where $ \Bs(i) = (r,c,k) $. 
By the condition $ i) $ on the multicharge $ \hat{\kappa} $, the content function
satisfies the separability condition given in \cite{Mat-So} and so $ \HHK$ is a
semisimple algebra.

\medskip
The following concepts and results have their origin in Murphy's papers. 
Let $ \std(n):= \cup_{\blambda \in \Par} \std(\blambda)$.
For $ \Bs $ any element of $ \std(n) $ we define 
\begin{equation}\label{idempotentK}
  F_{\Bs} := \prod_{k=1}^n \prod_{\substack{ \bT \in \std(n) \\ c_{\Bs}^{\K}(k) \neq c_{\bT}^{\K}(k) }} \frac{ L_k - c_{\bT}^{\K}(k)}{ c_{\Bs}^{\K}(k) - c_{\bT}^{\K}(k)} \in \HHK.
\end{equation}
It is known that the $ F_{\Bs} $'s form a complete system of orthogonal idempotents.
The $ F_{\Bs}$'s are simultaneous eigenvectors for the action of the $ L_i$'s and the corresponding eigenvalues are
given by the contents: 
\begin{equation}\label{by the contents}
L_i   F_{\Bs} =   F_{\Bs} L_i  = c_{\Bs}^{\K}(i) F_{\Bs}.
\end{equation}

Unfortunately, a construction in $ \HH$ similar to (\ref{idempotentK}) does not lead to
idempotents in $ \HH$. Note also that 
$F_{\Bs} \notin \HHO$ because of the denominators.
In order to get idempotents in $ \HHO $ and $ \HH$, we consider the sum over the $ F_{\Bs}$'s for $ \Bs $ belonging
to a class of a certain equivalence relation on tableaux, that we now explain. Let $ \Bs$ and $ \bT $ be
tableaux
for multipartitions $ \blambda $ and $ \bmu$. Then we set
$ \Bs \sim_e \bT $ if $ \res( \Bs(i) ) = \res( \bT(i) ) \,\, {\rm mod}\, e $ for all $ i $, or equivalently 
$ c_{\Bs}(i) = c_{\bT}(i)  $ for all $i$. This indeed defines
an equivalence class on the set of all tableaux. We denote by $ [\Bs] = [\Bs]_e $ the class under
$ \sim_e $  represented by
$ \Bs  $ and set 
\begin{equation}\label{andletusnowdefine}
E_{[ \Bs] } := \sum_{ \bT \in  [\Bs] \cap \std(n)} F_{\bT}.
\end{equation}  
Then Mathas has proved in \cite{MatCoef}, building on Murphy's ideas in the symmetric group case, that $ E_{[ \Bs] } $ belongs to $ \HHO $ and hence $ E_{[ \Bs] } \otimes_{\OO} 1 $ belongs to $\HH$.
We shall write $ E_{[ \Bs] } $ for $ E_{[ \Bs] } \otimes_{\OO} 1 $ as well. Clearly the $ E_{[ \Bs] } $'s are
orthogonal idempotents in both $ \HH $ and $ \HHO$.

Any equivalence class $ [\Bs ] $ gives rise to a residue sequence
$ \bi^{\Bs} := (i_1, i_2, \ldots, i_n  ) \in \II^n$ via $ i_j := c_{\Bs}(j) $. By construction, $ \bi^{\Bs} $ is independent of
the choice of representative of $ [\Bs] $.

\medskip

The Brundan-Kleshchev and Rouquier isomorphism Theorem establishes an isomorphism of $ \F$-algebras
$ f: \R \cong  \HH$. We need to explain the images of the generators under $ f$.

In the case of $ f(e(\bi)) $, Brundan and Kleshchev describe it as the idempotent for the 
generalized eigenspace for the joint action of the $ L_i$'s, that is 
\begin{equation}
 f(e(\bi)) \HH= \{ h \in \HH \mid (L_k - i_k)^m h = 0 \mbox{ for some } m > 1 \}.
\end{equation}  
There is however a more concrete description of $ f(e(\bi)) $ due to Hu-Mathas, see \cite{hu-mathas}. It is of importance
to us because it allows us to lift $ f(e(\bi)) $ to $ \HHK$, via {\color{black}{(\ref{andletusnowdefine})}}. It is given by the formula
\begin{equation}\label{givenbythe}
f(e(\bi) ) = \left\{ \begin{array}{ll} E_{[\Bs]} & \mbox{if } \bi = \bi^{\Bs} \mbox{ for some } \Bs \in \std(n)\\ 0 & \mbox{otherwise. }  \end{array}   \right.
\end{equation}
In order to describe $ f(y_i)$ and $ f( \psi_i) $ it is 
enough to describe $ f(y_i)E_{[\Bs]} $ and $ f( \psi_i) E_{[\Bs]}$, since we have that 
$ \sum_{[\Bs]}  E_{[\Bs]} = 1$.
In \cite{brundan-klesc} 
$ f(y_i)$ is described as the 'nilpotent part of the Jucys-Murphy element $ L_i$', or more precisely
\begin{equation}\label{firstlift}
  f(y_i  ) E_{[\Bs]} = \left(1- \frac{1}{c_{\Bs}(i)} L_i \right) E_{[\Bs]}.
\end{equation}
We have a lift of this to $ \HHK $ as well. Supposing that $c_{\Bs}(i) = q^{\kappa_m +c-r} \in  \F $ we let 
$ \widehat{c_{\Bs}}(i) := \q^{\hat{\kappa}_m +\hat{c}-\hat{r}}$ 
where $ \hat{c}-\hat{r} \in  \mathbb Z $
is any preimage of $ c-r \, \, \rm{ mod } \, e $. Then our lift of (\ref{firstlift}) is
\begin{equation}\label{thirdlift}
  \left(1- \frac{1}{c_{\Bs}(i)} L_i \right)  \sum_{ \bT \in  [\Bs]} F_{\bT} =
\sum_{ \bT \in  [\Bs]} \left(1- \frac{c^{\K}_{\bT}(i)}{\widehat{c_{\Bs}}(i)} \right) F_{\bT}   \in \HHK.
 \end{equation}

The $ y_i$'s are nilpotent elements of $ \R $. Using this, 
Brundan and Kleshchev define in \cite{brundan-klesc} formal 
power series $ P_i(\bi), Q_i(\bi) $ in $ \F[[y_i, y_{i+1}]] $. They give the formula
\begin{equation}\label{secondlift}
\psi_i e(\bi) =
(T_i + P_r(\bi))Q_i (\bi)^{-1} e(\bi)
\end{equation}
which defines $ f(\psi_i)$
since we already know $ f(y_i) $ and $ f( e(\bi)) $.

\medskip
To make use of these formulas we shall rely on 
$ \{ f_{\Bs \bT} \mid \Bs, \bT \in \std(\blambda), \blambda \in \Par \} $, the \emph{seminormal basis}
for $ \HHK$, constructed by Mathas in \cite{MatCoef}. We have that
\begin{equation}
  F_{\Bs} f_{\Bs_1 \bT_1} F_{\bT} = \delta_{ \Bs, \Bs_1} \delta_{ \bT, \bT_1} f_{\Bs \bT}
\end{equation}
where 
$ \delta_{ \Bs, \Bs_1} $ and $  \delta_{ \bT, \bT_1} $ are Kronecker delta functions, and
so $ \{ f_{\Bs \bT} \} $ is a $ \K$-basis for $ \HHK $ consisting of eigenvectors for the action of
the $ L_i$'s.

\medskip
We need the following analog of the classical formulas for the action of $ s_i $ on the seminormal basis of
the group algebra of the symmetric group. In this particular case, they are due to Mathas, see Proposition 2.7 of
\cite{MatCoef}.

\begin{proposition}\label{YSF}
  Let $ \Bs $ and $ \Bu $ be standard $ \blambda $-tableaux and let $ \bT = \Bs s_i $.
  If $ \bT $ is standard then

\begin{equation}\label{78}
    f_{ \Bu \Bs} T_i = \left\{ \begin{array}{ll} \frac{ (q-1) c^{\K}_{ \bT}(i) } { c^{\K}_{ \bT}(i) - c^{\K}_{ \Bs}(i)}     f_{ \Bu \Bs} +     f_{ \Bu \bT} & \mbox{ if } \Bs \rhd_{\infty} \bT    \\ & \\
      \frac{ (q-1) c^{\K}_{ \bT}(i) } { c^{\K}_{ \bT}(i) - c^{\K}_{ \Bs}(i)}     f_{ \Bu \Bs} +
      \frac{( q  c^{\K}_{ \Bs}(i) - c^{\K}_{ \bT}(i)) (   c^{\K}_{ \Bs}(i) - q c^{\K}_{ \bT}(i))   } { (c^{\K}_{ \bT}(i) - c^{\K}_{ \Bs}(i))^2}
      f_{ \Bu \bT} & \mbox{ if } \Bs \lhd_{\infty} \bT
      \end{array} \right. 
\end{equation}
whereas if $ \bT $ is non-standard then  
  \begin{equation}\label{79}
    f_{ \Bu \Bs}     T_i = \left\{ \begin{array}{ll}   q f_{ \Bu \Bs} & \mbox{ if } i  \mbox{ and } i+1 \mbox{ are in the same
        row of  } \Bs \\ & \\
      - f_{ \Bu \Bs} & \mbox{ if } i  \mbox{ and } i+1 \mbox{ are in the same column of  } \Bs {\color{black}{.}}
    \end{array} \right.
  \end{equation}   
There are versions of (\ref{78}) and (\ref{79}), with $ T_i $ multiplying on the left.
\end{proposition}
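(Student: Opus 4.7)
The plan is to work in the semisimple algebra $\HHK$ and exploit that $\{f_{\Bu\Bs}\}$ is simultaneously a left-eigenbasis and a right-eigenbasis for the Jucys--Murphy elements $L_j$, with eigenvalues $c^{\K}_{\Bu}(j)$ and $c^{\K}_{\Bs}(j)$ respectively. The starting point is to expand $f_{\Bu\Bs}T_i$ in this basis and then use spectral information to narrow down which terms can occur.

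First I would expand $f_{\Bu\Bs}T_i = \sum_{\Bu',\Bv'}\gamma_{\Bu'\Bv'}f_{\Bu'\Bv'}$ and multiply on the left by $F_{\Bu}$. Using $F_{\Bu}f_{\Bu\Bs} = f_{\Bu\Bs}$ and $F_{\Bu}f_{\Bu'\Bv'} = \delta_{\Bu\Bu'}f_{\Bu'\Bv'}$ this immediately forces $\Bu' = \Bu$. To control $\Bv'$, I would use that $T_i$ commutes with $L_j$ for $j\neq i,i+1$ by Definition \ref{hecke algebra}, and, by a short computation from the relation $T_iL_i = L_{i+1}(T_i-q+1)$ together with $T_i^2 = (q-1)T_i+q$, also with the symmetric functions $L_i+L_{i+1}$ and $L_iL_{i+1}$. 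Multiplying the expansion on the right by these three types of elements and equating eigenvalues forces $c^{\K}_{\Bv'}(j) = c^{\K}_{\Bs}(j)$ for $j\neq i,i+1$, as well as $\{c^{\K}_{\Bv'}(i),c^{\K}_{\Bv'}(i+1)\} = \{c^{\K}_{\Bs}(i),c^{\K}_{\Bs}(i+1)\}$. Since standard tableaux are separated by their content sequence, the only possibilities for $\Bv'$ are $\Bs$ itself and $\bT := \Bs s_i$ (the latter only when standard). This yields
\begin{equation}\label{eqYSFsketch}
f_{\Bu\Bs}T_i = \alpha f_{\Bu\Bs} + \beta f_{\Bu\bT},
\end{equation}
with the convention $\beta = 0$ if $\bT$ is non-standard.

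To pin down $\alpha$, I would multiply (\ref{eqYSFsketch}) on the right by $L_i$ in two ways: on one hand using $f_{\Bu\Bs}L_i = c^{\K}_{\Bs}(i)f_{\Bu\Bs}$ and $f_{\Bu\bT}L_i = c^{\K}_{\bT}(i)f_{\Bu\bT}$, and on the other hand using (\ref{jm}) to write $T_iL_i = L_{i+1}(T_i-q+1)$ and then $f_{\Bu\Bs}L_{i+1} = c^{\K}_{\Bs}(i+1)f_{\Bu\Bs}$. Equating the coefficients of $f_{\Bu\Bs}$ gives $\alpha = (q-1)c^{\K}_{\Bs}(i+1)/(c^{\K}_{\Bs}(i+1) - c^{\K}_{\Bs}(i))$. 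When $\bT$ is standard, the identity $c^{\K}_{\Bs}(i+1) = c^{\K}_{\bT}(i)$ rewrites this in the form appearing in (\ref{78}). When $\bT$ is non-standard, the entries $i$ and $i+1$ lie in the same row or column of $\Bs$, so $c^{\K}_{\Bs}(i+1)$ is $qc^{\K}_{\Bs}(i)$ or $q^{-1}c^{\K}_{\Bs}(i)$, and direct substitution yields the values $q$ and $-1$ of (\ref{79}).

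The remaining task, which I expect to be the most delicate point, is to determine $\beta$ in the standard case. Here one must fix the normalization of $f_{\Bu\bT}$ relative to $f_{\Bu\Bs}$; following Mathas' conventions in \cite{MatCoef} one sets $\beta = 1$ when $\Bs \rhd_{\infty}\bT$. The value of $\beta$ when $\Bs \lhd_{\infty}\bT$ is then forced by the quadratic relation (\ref{quadratic}): applying $T_i$ to both sides of (\ref{eqYSFsketch}) and using the analogous expansion for $f_{\Bu\bT}T_i$, the comparison of the coefficients of $f_{\Bu\Bs}$ produces $\beta\beta' = q + (q-1)\alpha - \alpha^2$, where $\beta' = 1$ is the down-dominance coefficient attached to $\bT$. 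A short simplification, setting $a = c^{\K}_{\Bs}(i)$ and $b = c^{\K}_{\bT}(i)$, gives the closed form $(qa-b)(a-qb)/(b-a)^2$ of (\ref{78}). Finally, the versions of (\ref{78}) and (\ref{79}) with $T_i$ multiplying on the left follow by a symmetric argument using left-handed versions of the same identities, or equivalently by applying the antiinvolution $\ast$ of $\HHK$.
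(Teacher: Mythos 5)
Your argument is correct, but it is worth noting that the paper does not actually prove Proposition \ref{YSF} at all: it quotes the formulas from Proposition 2.7 of \cite{MatCoef}, adding only a discussion of the sign discrepancy caused by Mathas' quadratic convention $(T_r-1)(T_r+q)=0$ versus the convention $(T_r+1)(T_r-q)=0$ of (\ref{quadratic}). You instead give a self-contained derivation by the classical spectral method: left multiplication by $F_{\Bu}$ fixes the first index, commutation of $T_i$ with $L_j$ ($j\neq i,i+1$) and with $L_i+L_{i+1}$, $L_iL_{i+1}$ plus separability of the content function confines the expansion to $f_{\Bu\Bs}$ and $f_{\Bu\bT}$, the relation $T_iL_i=L_{i+1}(T_i-q+1)$ pins down the diagonal coefficient (and yields $q$, $-1$ in the non-standard cases), and the quadratic relation converts the known ``down'' coefficient into the product $(qc^{\K}_{\Bs}(i)-c^{\K}_{\bT}(i))(c^{\K}_{\Bs}(i)-qc^{\K}_{\bT}(i))/(c^{\K}_{\bT}(i)-c^{\K}_{\Bs}(i))^2$; your computation checks out. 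What your route buys is precisely what the paper otherwise has to argue informally: the formulas are verified directly in the paper's own conventions, so the sign-correction discussion becomes unnecessary. What the citation buys is the one ingredient your sketch still borrows from \cite{MatCoef}, namely the normalization $\beta=1$ when $\Bs\rhd_{\infty}\bT$. Be careful here: the seminormal basis is already fixed by Mathas' construction $f_{\Bs\bT}=F_{\Bs}m_{\Bs\bT}F_{\bT}$ (it is determined up to scalars by the orthogonality property alone), so the value $\beta=1$ is a consequence of the unitriangular relation between $\{f_{\Bs\bT}\}$ and the standard basis, not a convention one is free to impose after the fact; to make your proof genuinely independent of \cite{MatCoef} you would either have to establish this unitriangularity or show that a consistent rescaling achieving $\beta=1$ for all adjacent pairs, compatibly with $f_{\Bs\bT}^{\ast}=f_{\bT\Bs}$, exists. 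Since the paper itself leans entirely on \cite{MatCoef} for this Proposition, borrowing that single point is acceptable, but it should be flagged as a citation rather than a normalization choice.
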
  

Actually there are some minor sign errors at this point in \cite{MatCoef}.
In fact, our formulas (\ref{78}) are completely identical with the formulas used by Mathas in \cite{MatCoef}, but
only our formulas are correct since Mathas' quadratic relations take the form
$ (T_r -1) (T_r+q ) = 0 $ whereas ours are $ (T_r +1) (T_r-q ) = 0 $, see (\ref{quadratic}).

Note that the formulas of the
Proposition depend on the order $ \unlhd_{\infty} $, although we believe that it is possible
to obtain similar formulas depending on $ \unlhd_0$. Note also that it follows from the formulas
that $ \spa_{\K} \{ f_{ \Bs \bT} \mid \shape(\Bs) = \blambda_0 \} $ is a two-sided ideal of
$ \HHK$ where $ \blambda_0 $ is any fixed multipartition.
Finally, note that all coefficients appearing in the formulas are nonzero.
In the case of the second coefficient of (\ref{78}), this is a consequence of the
   condition $i) $ on the multicharge  $ \hat{\kappa} $.

We have the following formula relating the seminormal basis to the $ F_{\bT}$'s 
\begin{equation}\label{knownconstanst}
F_{\bT} = \frac{1}{\gamma_{\bT}} f_{\bT \bT} 
\end{equation}
where $ \bT $ is any standard tableau of a multipartition $  \blambda $ and where $ {\gamma_{\bT}} \in  \K^{\times}$ is
a known constant.

\medskip
We need the following Lemma.
\begin{lemma}\label{tableauxclasses}
  Let $ \blambda \in \OnePar $ be a one-column multipartition and let $ \bT^{\blambda} $ be the maximal
  $ \blambda$-tableau, as above. Suppose that 
  $ \Bs \in [ \bT^{\blambda} ] {\color{black}{ \setminus \{\bT^{\blambda}}} \}$ and that $\shape(\Bs) \in \OnePar$. Then $ \Bs >  \bT^{\blambda}   $.
\end{lemma}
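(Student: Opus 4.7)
The plan is to locate the first index where $\Bs$ and $\bT^{\blambda}$ differ and to compare their shapes there. Let $k$ be the smallest integer with $\Bs(k) \neq \bT^{\blambda}(k)$; such a $k$ exists because $\Bs \neq \bT^{\blambda}$, and then $\shape(\Bs\!\mid_{k-1}) = \shape(\bT^{\blambda}\!\mid_{k-1}) =: \bnu$. Since $\Bs \sim_e \bT^{\blambda}$, both $\Bs(k)$ and $\bT^{\blambda}(k)$ are addable nodes of $\bnu$ sharing the residue $i_k$. By the definition of $<_0$ in (\ref{lexiorder}) it will suffice to show $\shape(\Bs\!\mid_k) \rhd_0 \shape(\bT^{\blambda}\!\mid_k)$, and by Lemma \ref{order-bijection} this in turn reduces to the nodewise inequality $\Bs(k) \rhd_0 \bT^{\blambda}(k)$, witnessed by the bijection that is the identity on $\bnu$ and sends $\bT^{\blambda}(k) \mapsto \Bs(k)$.

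To attack this inequality, write $\bT^{\blambda}(k) = (r_0, 1, m_0)$ and $\Bs(k) = (r', 1, m')$. Since each column of a one-column multipartition has at most one addable node, the assumption $\Bs(k) \neq \bT^{\blambda}(k)$ forces $m' \neq m_0$. Equality of residues gives $r' - r_0 \equiv \kappa_{m'} - \kappa_{m_0} \pmod{e}$. Condition (i) of strong adjacency-freeness, together with (iv), yields $|\hat{\kappa}_{m'} - \hat{\kappa}_{m_0}| \geq n$, while the trivial bound $|r' - r_0| < n$ holds. Together these pin $r' - r_0$ down as the unique integer lift of absolute value smaller than $n$ of the corresponding congruence class modulo $e$, and the entire problem collapses to determining the sign of this lift.

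The main obstacle is to force this sign to be negative, i.e.\ $r' < r_0$ (or the tiebreak case $r' = r_0$ with $m' < m_0$). The plan is first to rule out $\Bs(k) \in [\blambda]$: were this the case, then by the greedy construction of $\bT^{\blambda}$ and Lemma \ref{t-lamb-max} we would have $\Bs(k) \lhd_0 \bT^{\blambda}(k)$, and a contradiction would be extracted by tracing the balanced column-filling dictated by the greedy recursion together with condition (ii) of Definition \ref{strongadj}, which precludes two distinct addable nodes of $\bnu$ in $[\blambda]$ sharing a residue at a stage reachable by the greedy procedure. Once $\Bs(k) \notin [\blambda]$ is established, the column $m'$ of $\blambda$ contains exactly $r' - 1$ nodes whereas the column $m_0$ contains at least $r_0$ nodes; a short case distinction on $m' > m_0$ versus $m' < m_0$, with condition (iii) invoked to handle the wrap-around between the first and last components, then forces the unique lift of $r' - r_0$ to be strictly negative. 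This gives $\Bs(k) \rhd_0 \bT^{\blambda}(k)$, and then the bijection described above yields $\shape(\Bs\!\mid_k) \rhd_0 \shape(\bT^{\blambda}\!\mid_k)$ and hence $\Bs > \bT^{\blambda}$, completing the proof.
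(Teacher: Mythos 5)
Your skeleton is the same as the paper's: take the first index $k$ where $\Bs(k)\neq\bT^{\blambda}(k)$, observe that both nodes are addable nodes of $\bnu:=\shape(\bT^{\blambda}\!\mid_{k-1})$ of the same residue, and reduce everything to the node inequality $\Bs(k)\rhd\bT^{\blambda}(k)$; your exclusion of $\Bs(k)\in[\blambda]$ via condition $ii)$ of Definition \ref{strongadj} is also correct. The gap lies in the final sign determination. First, the ``unique integer lift'' step is not valid: the congruence $r'-r_0\equiv\kappa_{m'}-\kappa_{m_0} \ (\mathrm{mod}\ e)$ lives modulo $e$, and only $e>2l$ is guaranteed, not $e\geq 2n$, so a class modulo $e$ may have several lifts of absolute value $<n$; moreover the bound $|\hat{\kappa}_{m'}-\hat{\kappa}_{m_0}|\geq n$ coming from $i)$ and $iv)$ concerns the integral multicharge and has no bearing on this mod-$e$ congruence. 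Second, and more seriously, the data you assemble in the last step (column $m'$ of $\blambda$ has exactly $r'-1$ nodes, column $m_0$ has at least $r_0$ nodes, equal residues, conditions $i)$--$iv)$) simply do not force $r'<r_0$. Take $l=2$, $e=5$, $\hat{\kappa}=(0,12)$, so $\kappa=(0,2)$ and $i)$--$iv)$ hold for $n=11$; let $\blambda=((1^5),(1^6))$, $\bnu=((1^4),(1^6))$, $\alpha=(7,1,2)$, $\beta=(5,1,1)$. Both nodes are addable for $\bnu$, both have residue $1$, $\alpha\notin[\blambda]$, $\beta\in[\blambda]$, the column counts are exactly as in your statement, and $m'>m_0$, yet $r'-r_0=2>0$. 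This configuration cannot occur under the Lemma's hypotheses, but only because $\bnu$ there is a restriction of the row-reading tableau $\bT^{\blambda}$ (and $((1^4),(1^6))$ is not such a restriction of $((1^5),(1^6))$) --- a fact your final step never uses: you invoke the greedy structure to rule out $\Bs(k)\in[\blambda]$ and then drop it, replacing it by residue bookkeeping with $iii)$ and $iv)$, which cannot suffice.

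The missing argument is short. Since $\Bs(k)=(r',1,m')$ is addable for $\bnu$ but not a node of $[\blambda]$, column $m'$ of $\bnu$ coincides with the full column $m'$ of $\blambda$, so every node of that column is filled in $\bT^{\blambda}$ strictly before position $k$. Because $\bT^{\blambda}$ fills $[\blambda]$ along rows, left to right, this forces $a_{m'}<r_0$, or $a_{m'}=r_0$ together with $m'<m_0$, where $a_{m'}=r'-1$ is the length of column $m'$. In the second case $r'-r_0=1$, and if $r'=r_0$ in the first case then $\kappa_{m'}\equiv\kappa_{m_0}$; both possibilities contradict condition $ii)$ via the residue congruence. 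Hence $r'<r_0$, i.e.\ $\Bs(k)\rhd\bT^{\blambda}(k)$, and your concluding bijection argument then finishes the proof. Note that only condition $ii)$ and the row-reading structure of $\bT^{\blambda}$ are needed here; conditions $i)$, $iii)$, $iv)$ play no role in this Lemma.
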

\begin{proof}
  Let $ \Bs \in [ \bT^{\blambda} ] \setminus \{\bT^{\blambda} \}  $ and
  let $ i \in \bn $ be minimal such that $ \Bs(i) \neq \bT^{\blambda}(i) $. 
  The nodes $ \Bs(i) $ and $ \bT^{\blambda}(i) $ have the same residues since
  $ \Bs \sim_e \bT^{\blambda} $ and so strong adjacency-freeness of $ \hat{\kappa}$, together with the fact that
  $ \Bs $ is standard, implies that $ i $ is
  situated higher in $ \Bs $ than in $ \bT^{\blambda} $, that is $ \Bs(i ) \rhd \bT^{\blambda}(i)$.
  But then we have either $ \Bs >  \bT^{\blambda}   $ or $ \shape(\Bs) \notin  \OnePar$ which proves the Lemma.
\end{proof}

With these preparations, we can now prove the linear independence of our proposed basis.

\begin{theorem}
  The set $ \Basis = \{ m_{\Bs \bT} \mid \blambda \in \OnePar, \, \Bs, \bT \in \std(\blambda) \} $
  introduced in (\ref{stdgen}) is linearly independent over $ \F $ and hence it is a basis for $ \B$.
\end{theorem}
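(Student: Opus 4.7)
The plan is to exploit the Brundan--Kleshchev--Rouquier isomorphism $ f: \R \xrightarrow{\sim} \HH $ together with Mathas's seminormal basis of the generic semisimple algebra $ \HHK $ to establish an upper-triangular expansion for the $ m_{\Bs\bT} $'s. Since the preceding section has shown that $ \Basis $ spans $ \B $, only linear independence remains to prove.

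First, I would lift each $ f(m_{\Bs\bT}) \in \HH $ to an element $ \tilde{m}_{\Bs\bT} \in \HHK $ via the $ \OO $-form of the BKR isomorphism, using the Hu--Mathas formulas~(\ref{givenbythe}), (\ref{firstlift}), (\ref{thirdlift}) and~(\ref{secondlift}). By~(\ref{givenbythe}) and~(\ref{andletusnowdefine}) one has $ f(e(\bi^{\blambda})) = E_{[\bT^{\blambda}]} = F_{\bT^{\blambda}} + \sum_{\Bu} F_{\Bu} $, in which Lemma~\ref{tableauxclasses} forces every standard $ \Bu \in [\bT^{\blambda}] \setminus \{\bT^{\blambda}\} $ to satisfy $ \shape(\Bu) \notin \OnePar $, or $ \shape(\Bu) \in \OnePar $ with $ \shape(\Bu) > \blambda $ in the total order~(\ref{lexiorderpar}). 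Iterating Proposition~\ref{YSF} along a reduced expression for $ d(\bT) $ that traverses standard $ \blambda $-tableaux---obtained by reversing the chain in the proof of Lemma~\ref{t-lamb-max}---and using that every coefficient in~(\ref{78}) is a unit of $ \K $ (by the separability granted by condition $ i)$ of Definition~\ref{strongadj}), one obtains
\[
F_{\bT^{\blambda}} \cdot f(\psi_{d(\bT)}) \;=\; \gamma'_{\bT}\, f_{\bT^{\blambda}, \bT} \;+\; \sum_{\Bv \in \std(\blambda),\, \Bv \neq \bT} c_{\Bv}\, f_{\bT^{\blambda}, \Bv}
\]
with $ \gamma'_{\bT} \in \K^{\times} $. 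Combining this with the symmetric computation on the left via $ f(\psi_{d(\Bs)})^{\ast} $ and with the above decomposition of $ E_{[\bT^{\blambda}]} $ produces an expansion of $ \tilde{m}_{\Bs\bT} $ whose leading term is $ \gamma_{\Bs\bT} f_{\Bs\bT} $, $ \gamma_{\Bs\bT} \in \K^{\times} $, with all remaining terms either (i) Bruhat-lower within the $ \blambda $-block (via Theorem~\ref{Teo-Ehr}), (ii) supported on seminormal elements $ f_{\Bs'\bT'} $ with $ \shape(\Bs') \in \OnePar $ strictly greater than $ \blambda $, or (iii) supported on $ f_{\Bs''\bT''} $ with $ \shape(\Bs'') \notin \OnePar $.

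Next, I would identify the $ \K $-ideal corresponding to $ f(J) $ in $ \HHK $ with $ \spa_{\K}\{ f_{\Bs''\bT''} : \shape(\Bs'') \notin \OnePar \} $, i.e.\ the sum of Wedderburn blocks of $ \HHK $ indexed by $ \MP \setminus \OnePar $. The inclusion ``$ \subseteq $'' follows because every generator $ E_{[\Bu]} $ of $ f(J) $ corresponds to an $ \bi $ satisfying the hypothesis of~(\ref{eq13}), whence by strong adjacency-freeness and Lemma~\ref{tableauxclasses} every standard representative of $ [\Bu] $ has shape outside $ \OnePar $. The reverse inclusion follows because for each $ \bmu \notin \OnePar $ one can construct a standard $ \bmu $-tableau $ \Bu $ with $ \Bu(1)=(1,1,m) $, $ \Bu(2)=(1,2,m) $ for some component $ m $, making $ \bi^{\Bu} $ trigger~(\ref{eq13}); then $ F_{\Bu} $---a matrix-unit idempotent in the $ \bmu $-block---generates the whole block as an ideal. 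Modulo this ideal, the triangular expansion above becomes upper-triangular in the triple $ (\blambda,\Bs,\bT) $ under the total order $ < $ on $ \OnePar $ and the product Bruhat order on $ \std(\blambda)\times\std(\blambda) $; a hypothetical relation $ \sum c_{\Bs\bT}^{\blambda} m_{\Bs\bT} = 0 $ in $ \B $ then forces, at the minimal $ \blambda_{0} $ and maximal $ (\Bs_{0},\bT_{0}) $ carrying a nonzero coefficient, the identity $ c_{\Bs_{0}\bT_{0}}^{\blambda_{0}} \gamma_{\Bs_{0}\bT_{0}} = 0 $, contradicting $ \gamma_{\Bs_{0}\bT_{0}} \ne 0 $.

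The hard part will be making the lift from $ \HH $ to $ \HHK $ functorial enough that the triangular expansion propagates cleanly through the quotient $ \B = \HH/f(J) $. In particular one must control the error terms produced by switching from an unofficial reduced expression via~(\ref{official}), by the nilpotent power-series factor $ Q_{i}(\bi)^{-1} $ appearing in~(\ref{secondlift}), and by the passage through the $ \OO $-form of the BKR isomorphism; these technicalities can be handled following~\cite{hu-mathas}, and the essential new combinatorial input is Lemma~\ref{tableauxclasses}, which guarantees that no in-shape interference with the leading term $ f_{\Bs\bT} $ beyond the expected Bruhat-lower terms can occur.
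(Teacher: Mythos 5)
Your proposal is correct and follows essentially the same route as the paper: lift via the Brundan--Kleshchev--Rouquier isomorphism, use the Hu--Mathas description $f(e(\bi))=E_{[\Bs]}$ to pass to Mathas's seminormal basis of $\HHK$, show the kernel of $\R\rightarrow\B$ lands in the blocks indexed by $\MP\setminus\OnePar$ (via strong adjacency-freeness and Lemma \ref{tableauxclasses}), and extract a unit leading coefficient of $f_{\Bs\bT}$ from a triangular expansion controlled by Proposition \ref{YSF} and Theorem \ref{Teo-Ehr}. The only differences are cosmetic: your two-step extremality choice (minimal shape, then extremal pair, which is your ``maximal in the standard Bruhat convention'' and the paper's ``minimal in dominance'') and the reverse inclusion for the kernel ideal are refinements the paper does not need.
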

\begin{proof}
  Let us assume that there is a non-trivial linear dependence between the elements of $ \Basis$
  \begin{equation}\label{ldp}  
\sum_{ \Bs, \bT } \lambda_{ \Bs \bT } m_{\Bs \bT} = 0. 
  \end{equation}
  Letting $ \pi: \R \rightarrow \B$ be the projection map from the KLR-algebra to the blob-algebra
  and {\color{black}{taking inverse images}} on both sides of (\ref{ldp}) we then get 
  \begin{equation}
\sum_{ \Bs, \bT } \lambda_{ \Bs \bT } m_{\Bs \bT} + p = 0
  \end{equation}  
{\color{black}{for some}} $ p \in \ker \pi $ and so 
  \begin{equation}\label{takesplace}
\sum_{ \Bs, \bT } \lambda_{ \Bs \bT } f(m_{\Bs \bT}) + f(p) = 0.
  \end{equation}   
We now note that any $ f(m_{\Bs \bT}) =  f(\psi_{ d(\Bs)}^{\ast}e( \bi^{ \blambda})  \psi_{ d(\bT)}) $ can be written 
as a linear combination of terms of the form $   T_v^{\ast}  g_v(y) E_{ [\bT^{ \blambda}]} f_w (y) T_w  $
where $g_v(y) , f_w (y) \in 
\F[y_1, \ldots, y_n] $ for some $ v,w \in \Si_n $ with $ v \ge  d( \Bs) $ and $ w \ge  d( \bT) $ and where $g_{d( \Bs)}(y) $ and $  f_{d( \bT)}(y) $ are invertible, that is
of nonzero constant terms.
That this is possible follows from (\ref{givenbythe}) and an observation due to Hu and Mathas, 
see the proof of Lemma 5.4 of {\color{black}{\cite{hu-mathas}.}}
Combining this expansion with Lemma \ref{uppertriangularI} we get
that
\begin{equation}\label{analyse}
f(m_{\Bs \bT}) =  T_{d(\Bs)}^{\ast}   E_{ [\bT^{ \blambda}]}  T_{d(\bT)} +
\sum_{ v > d(\Bs), w > d(\bT) } \mu_{ v,w } \,  T_{v}^{\ast}   E_{ [\bT^{ \blambda}]}  T_{w} 
+ \sum_{ \bmu > \blambda } f(D_{\bmu}) + f(p_1)
\end{equation}
where $ D_{\bmu} {\color{black}{\in \langle e( \bi^{\bmu}) \rangle}}$,  $ \mu_{ v,w } \in \F$ and $ p_1 \in \ker \pi$.
This expression for $ f(m_{\Bs \bT}) $ takes place in $ \HH $, but can be lifted to $ \HHO $ 
via (\ref{givenbythe})
and then embedded in $ \HHK $. Let us now analyse the various ingredients of (\ref{analyse}), starting with $ f(p_1)$.
We have that
\begin{equation}
  \ker \pi = \langle  e(\bi)   \mid  i_1 \in \{ \kappa_1, \ldots, \kappa_l\},  i_2 = i_1 +1 \,{\rm  mod }\,\, e \rangle \subseteq \R
\end{equation}  
corresponding to the omission of relation (\ref{eq13}).
Using {\color{black}{(\ref{andletusnowdefine})}} and (\ref{givenbythe}) we then get that
\begin{equation}\label{wethengetthat}  f(p_1) = \sum_{ \Bs \in \std(n)} \sum_{ \bT \in  [\Bs]}  a^{\Bs}_{\bT,1}   F_{\bT} \,
  a^{\Bs}_{\bT,2}  
\end{equation}
where $  a_{\bT,1}^{{\color{black}{\Bs}}}, a_{\bT,2}^{{\color{black}{\Bs}}} \in \HHK $ and where $ \Bs \in \std(n) $ 
satisfies $ {\rm res}(\Bs(1)) \in \{ \kappa_1, \ldots, \kappa_l\} $ and $  {\rm res}(\Bs(2)) = {\rm res}(\Bs(1)) +1 \,{\rm  mod }\,\, e$. These conditions, together with the conditions on $\hat{\kappa} $, imply that for each $ \bT \in [\Bs] $
we have $ \shape(\bT) \notin \OnePar$.
Combining this with Proposition \ref{YSF} and (\ref{knownconstanst})
we get that
\begin{equation}\label{combining}  f(p_1) \in \spa_{\K} \{ f_{\Bs \bT} \mid \Bs, \bT \in \std(\blambda), \blambda \notin \OnePar \}.
\end{equation}
Let us now consider the terms $ f(D_{\bmu}) $ of (\ref{analyse}). We have that 
\begin{equation}
f(D_{\bmu}) = \sum_{ \bT \in  [\bT^{\bmu}]}  a_{\bT,1}   F_{\bT} \,a_{\bT,2}  
\end{equation}
where $  a_{\bT,1}, a_{\bT,2} \in \HHK $. For each appearing $ \bT $ we have $ \bT > \bT^{\bmu}  $ by 
Lemma \ref{tableauxclasses}. Combining this with $ \bmu > \blambda$, that is  $ \bT^{\bmu} > \bT^{\blambda}$, 
we get that $  \bT > \bT^{\blambda}  $ and so there is a $ k $ such that $ \bT \! \mid_k = \bT^{\blambda} \! \!  \mid_k $
and $ \bT(k+1) \rhd  \bT^{\blambda}(k+1) $. But then $ \bT(k+1) \notin [ \blambda] $, which implies that
$ \shape(\bT) > \blambda$. Hence we have that 
\begin{equation}\label{combining10}  f(D_{\bmu}) \in \spa_{\K} \{ f_{\Bs \bT} \mid \Bs, \bT \in \std(\bnu), \bnu > \blambda \}.
\end{equation}
Similarly, for all tableaux $ \bT $ in $[ \bT^{\blambda} ] $ we have that $ \shape(\bT) >\blambda $. 
Hence from (\ref{analyse}), (\ref{combining}) and (\ref{combining10}) 
we get that 
\begin{equation}\label{analyse2}
f(m_{\Bs \bT}) \in   T_{d(\Bs)}^{\ast}   F_{ { \blambda}}  T_{d(\bT)} +
\sum_{ v > d(\Bs), w > d(\bT) } \mu_{ v,w } \,  T_{v}^{\ast}    F_{ { \blambda}} T_{w}  +
\spa_{\K} \{ f_{\Bs \bT} \mid \Bs, \bT \in \std(\bnu), \bnu > \blambda \mbox{ or }
\bnu \notin \OnePar \}
\end{equation}
where $ \blambda $ as a subscript refers to $ \bT^{\blambda}$.

Let us now focus on $ T_{d(\Bs)}^{\ast}   F_{ { \blambda}}  T_{d(\bT)} $. 
Let $ d(\bT) = s_{i_1}  s_{i_2}  \cdots s_{i_N}  $ be a reduced expression for $ d(\bT)$.
  When calculating $ f_{\blambda} T_{ d(\bT)} $ using this expression and Proposition \ref{YSF},
  we obtain an expression for $ f_{\blambda} T_{ d(\bT)} $ as a
  $ \K$-linear combination of certain $  f_{\blambda \Bu }  $'s.
  But by the formulas of the Proposition, for each appearing $ \Bu $ we have that $ d(\Bu ) $ is a subexpression of
  $ s_{i_1}  s_{i_2}  \cdots s_{i_N}  $ and so by our version of the Ehresmann Theorem, that is Theorem
\ref{Teo-Ehr}, we have that $\bT \unlhd \Bu $ for each 
  occurring $  f_{\blambda \Bu }  $. 
  Letting $ \bT_k := \bT^{\blambda} s_{i_1} \ldots  s_{i_k} $ we have 
  $ \bT_{k+1} \lhd \bT_{k} $ for all $ k=1, \ldots, N-1$ and so 
in the above expansion of $ f_{\blambda} T_{ d(\bT)} $ the term 
$ f_{\blambda \bT }  $ corresponds exactly to the subexpression of $ s_{i_1}  s_{i_2}  \cdots s_{i_N}  $
where no $ s_i$ is omitted. By the remarks following the Proposition, the
corresponding coefficient $ \alpha_{\bT} $ is nonzero and 
so we have  
\begin{equation}\label{simi}
  f_{\blambda} T_{ d(\bT)}  =\alpha_{\bT} f_{ \bT^{\blambda} \bT}   + \sum_{ \Bu \rhd \bT} \alpha_{\Bu} f_{ \bT^{\blambda} \Bu}
\end{equation}
where $ \alpha_{\Bs},  \alpha_{\Bu} \in \K$ and where $ \alpha_{\bT} \neq 0$. Acting on the left
with $ T_{ d(\Bs)}^{\ast} $, and arguing the same way as we did for (\ref{simi}), we obtain an expansion 
\begin{equation}\label{weobtainan}
  T_{ d(\Bs)}^{\ast}  f_{\blambda} T_{ d(\bT)}  =\alpha_{\Bs \bT} f_{  \Bs \bT }   + \sum_{\Bu, \Bv \rhd \bT}
  \alpha_{\Bu \Bv} f_{ \Bu \Bv} 
\end{equation}
where $ \alpha_{\Bv \Bu}, \alpha_{\Bs \bT} \in \K $ and where $  \alpha_{\Bs \bT} \neq 0$.
Let us now focus on the term $ T_{v}^{\ast}    F_{ { \blambda}} T_{w} $ of (\ref{analyse2}).
But arguing as was done for $ T_{d(\Bs)}^{\ast}   F_{ { \blambda}}  T_{d(\bT)} $, we can
write $ T_{v}^{\ast}    F_{ { \blambda}} T_{w}$ as a linear combination of 
$ f_{ \Bv \Bu} $'s. Moreover, since 
 $ v > d(\Bs) $ and $ w > d(\bT) $ we get for each appearing
$  \Bu $ and $  \Bv $ the relations $ \Bu \rhd \Bs $ and $ \Bv   \rhd \bT  $.

All together we can now write (\ref{analyse2}) in the form 
\begin{equation}\label{analyse3}
f(m_{\Bs \bT}) \in   \alpha_{ {\Bs \bT }}   f_{\Bs \bT } + 
\sum_{ \Bu \rhd \Bs, \Bv   \rhd \bT}  \, \alpha_{ {\Bu \Bv }}  f_{\Bu \Bv } +
\spa_{\K} \{ f_{\Bs \bT} \mid \Bs, \bT \in \std(\bnu), \bnu > \blambda \mbox{ or }
\bnu \notin \OnePar \}
\end{equation}
where $ \alpha_{ {\Bs \bT }}  \in \K^{\times} $ and $ \alpha_{ {\Bu \Bv }}  \in \K $.

Let us finally return to the linear dependency (\ref{takesplace}).
Let us extend the order $ \lhd$ to pairs $ \{ (\Bs, \bT ) \in  \std(\blambda)^2 \mid \blambda \in \OnePar \} $ via
$ (\Bs, \bT ) \lhd (\Bs_1, \bT_1 ) $ if $ \Bs \lhd \Bs_1 $ and $ \bT \lhd \bT_1 $ and let us choose 
$ (\Bs_0, \bT_0) $ minimal such
that $ \lambda_{\Bs_0 \bT_0} \neq 0$. Let $ \blambda_0 = \shape(\Bs_0) $.
Using (\ref{analyse3}) we can rewrite (\ref{takesplace}) in terms of the $ f_{ \Bs \bT}$'s.
In this expression, there are no cancellations for the 
coefficient of $ f_{ \Bs_0  \bT_0}$'s which is therefore $\lambda_{\Bs_0 \bT_0} \cdot \alpha_{\Bs_0 \bT_0}\neq 0  $.
But this is in contradiction with the fact that the $f_{ \Bs \bT}$'s form a basis for $ \HHK$ and so the Theorem is proved.
\end{proof}  

\section{Cellularity of $ \Basis$ and JM-elements}
In this section we obtain our main results, showing that $ \Basis $ is a cellular basis for $ \B$ with respect
to $ \lhd$, endowed with a family of JM-elements.
Let us first recall the definition of a cellular algebra, as given by Graham and Lehrer, see \cite{GL}.
Since we are interested in the $ \F$-algebra $ \B$, we here consider only the special case of algebras defined over a field.

\begin{definition}\label{cellular algebra} 
Let $ k$ be a field and
suppose that $\A$ is a $k$-algebra. Suppose that $(\Lambda, \leq)$ is a poset such that
for each $\lambda \in \Lambda $ there is a finite set $T(\lambda)$ and elements $c_{{st}}^{\lambda} \in \A$
such that $$ B =\{ c_{{st}}^{\lambda} \mbox{  } | \mbox{  } \lambda\in \Lambda , {s},{t} \in T(\lambda)  \}$$
is a $k$-basis of $\A$. Then the pair $(B, \Lambda)$ is called a cellular basis for $\A$ if
the following conditions hold:
\begin{description}
\item[(i)] The $k$-linear map $*:\A \rightarrow \A$ given by $(c_{{st}}^{\lambda})^{*}=c_{{ts}}^{\lambda}$
  is an algebra anti-automorphism of $\A$.
  \item[(ii)] If ${s},{t} \in T(\lambda) $ and $a\in \A$
    then there exist scalars $r_{{u}sa} \in k $ such that
\begin{equation}\label{multstruc}
    ac_{{st}}^{\lambda} \equiv
    \sum_{{u} \in T(\lambda)} r_{{u s a}}   c_{{ut}}^{\lambda}  \mod A^{\lambda}
 \end{equation}   
      where $\A^{\lambda}$ is the $\F$-subspace of $\A$ spanned by $\{ c_{{ab}}^{\mu} \mbox{  } | \mbox{  } \mu > \lambda , {a},{b} \in T(\mu)  \}$.
\end{description}
If $\A$ has a cellular basis then we say that $\A$ is a cellular algebra.
\end{definition}

\medskip
Note that the coefficients $ r_{{u}sa}$ of (\ref{multstruc}) may depend on ${u}, {s}$ and $a$, 
but the point is 
that the $ r_{{u} s a }$'s do not depend on ${t}$. Now suppose that $\A$ is a $\mathbb{Z}$-graded $k$-algebra  and that each $c_{{st}}^{\lambda}$ is homogeneous with respect to the grading.
Suppose that there exists a function $$   \deg: \coprod_{\lambda \in \Lambda} T(\lambda) \rightarrow \mathbb{Z}$$
such that $\deg c_{{st}}^{\lambda} = \deg {s} + \deg {t}$. Then we say that $ B =\{ c_{{st}}^{\lambda} \mbox{  } | \mbox{  } \lambda\in \Lambda, {s},{t} \in T(\lambda)  \}$ is a \emph{graded cellular basis} for $\A$. If $\A$ has a graded cellular basis then we say that $\A$ is a \emph{graded cellular algebra}.
The concept of a graded cellular algebra was formally introduced in \cite{hu-mathas}.

\medskip

In the previous sections we have proved that $ \Basis$ is a basis for $ \B$ and in fact one can even
deduce from the results of 
these sections that $ \Basis$ is a graded cellular basis for $ \B$, with respect to $ <$.
However, we aim at proving the stronger statement that $ \Basis$ is a graded cellular basis with respect to $ \lhd$.
The key combinatorial
ingredient that allows us to pass from $ < $ to $ \lhd $ is given by the following two Lemmas.

\begin{lemma}\label{tableauxclassesdominant}
  Let $ \blambda \in \OnePar $ be a one-column multipartition and let $ \bT^{\blambda} $ be the maximal
  $ \blambda$-tableau, as before. Suppose that 
  $ \bT \in [ \bT^{\blambda} ] \setminus \{\bT^{\blambda}\}$
  and that $\shape(\Bs) \in \OnePar$. Then $ \shape(\bT) \rhd  {\blambda}   $. 
\end{lemma}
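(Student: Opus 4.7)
My plan is to exhibit a bijection $\Theta : [\blambda] \to [\shape(\bT)]$ satisfying $\Theta(\gamma) \unrhd \gamma$ for every $\gamma \in [\blambda]$, with strict inequality at at least one node, and then to invoke Lemma \ref{order-bijection} to conclude $\blambda \lhd \shape(\bT)$.

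The natural candidate is $\Theta(\bT^{\blambda}(k)) := \bT(k)$ for $k=1,\ldots,n$, which is a bijection since $\bT^{\blambda}$ and $\bT$ are. Because $\bT \sim_e \bT^{\blambda}$, the nodes $\bT^{\blambda}(k)$ and $\bT(k)$ share the same residue modulo $e$ for every $k$. I would prove $\bT(k) \unrhd \bT^{\blambda}(k)$ for all $k$ by induction on $k$. The base case $k=1$ is immediate: both $\bT(1)$ and $\bT^{\blambda}(1)$ must occupy the top of some column, i.e.\ a node of the form $(1,1,m)$, and must have residue $\kappa_m = i_1$; by part (ii) of strong adjacency-freeness the residues $\kappa_1,\ldots,\kappa_l$ are pairwise distinct modulo $e$, so $m$ is uniquely determined and $\bT(1)=\bT^{\blambda}(1)$. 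For the inductive step, the inductive hypothesis combined with Lemma \ref{order-bijection} yields $\shape(\bT^{\blambda} \! \mid_{k-1}) \unlhd \shape(\bT \!\mid_{k-1})$. Since $\bT(k)$ is an addable node of $\shape(\bT\!\mid_{k-1})$ of residue $i_k$ while $\bT^{\blambda}(k)$ is by construction the largest residue-$i_k$ addable node of $\shape(\bT^{\blambda}\!\mid_{k-1})$ belonging to $[\blambda]$, the column-separation supplied by conditions (i)--(iv) of strong adjacency-freeness together with the inductive hypothesis forces $\bT(k) \unrhd \bT^{\blambda}(k)$.

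The main obstacle is this inductive step. Strong adjacency-freeness severely restricts which columns of a given shape can carry an addable node of a prescribed residue, but the intermediate shapes $\shape(\bT^{\blambda}\!\mid_{k-1})$ and $\shape(\bT\!\mid_{k-1})$ can legitimately differ, and one must exclude the possibility that $\bT$ places $k$ strictly below $\bT^{\blambda}(k)$. The key tool is the inequality $\kappa_{i+1}-\kappa_i \geq n$, which prevents the residue ranges of distinct columns of a one-column multipartition from overlapping modulo $e$, so that the column in which a residue-$i_k$ addable node lives is essentially determined by the current column-length data; a careful case analysis based on the column-lengths of the two intermediate shapes then pins down $\bT(k)$ to a position $\unrhd \bT^{\blambda}(k)$.

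Finally, once $\bT(k) \unrhd \bT^{\blambda}(k)$ is established for every $k$, the hypothesis $\bT \neq \bT^{\blambda}$ yields some $k_0$ with $\bT(k_0) \neq \bT^{\blambda}(k_0)$, at which $\Theta(\bT^{\blambda}(k_0)) \rhd \bT^{\blambda}(k_0)$ strictly; Lemma \ref{order-bijection} then delivers $\blambda \lhd \shape(\bT)$, i.e.\ $\shape(\bT) \rhd \blambda$, completing the proof.
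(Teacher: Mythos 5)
Your overall framework coincides with the paper's: the same bijection $\Theta=\bT\circ(\bT^{\blambda})^{-1}$, the reduction via Lemma \ref{order-bijection} to showing $\bT(k)\unrhd\bT^{\blambda}(k)$ for every $k$ (the paper phrases this as a minimal counterexample $k_0$, you as an induction on $k$ -- these are the same argument), and your closing remark on strictness is fine. The problem is that the one step carrying all the content, namely excluding $\bT(k)\lhd\bT^{\blambda}(k)$, is precisely the step you do not prove, and the mechanism you propose for it is incorrect. Condition i) of Definition \ref{strongadj} is a condition on the integer lifts $\hat{\kappa}_i$; modulo $e$ the residue ranges of distinct columns of a one-column multipartition overlap heavily (see the residue diagram (\ref{exampleMax}), where the residues $0,9,8,7$ each occur in several columns), and distinct columns can simultaneously carry addable nodes of the same residue. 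So the claim that "the column in which a residue-$i_k$ addable node lives is essentially determined by the current column-length data" is false, and the "careful case analysis" you defer to is never supplied. As written, the inductive step is an assertion, not a proof.

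What actually closes the gap -- and is how the paper argues -- is the following pair of observations. First, if $\bT(k)=(r,1,j)$ and $\bT^{\blambda}(k)=(r_0,1,j_0)$ satisfy $\bT(k)\lhd\bT^{\blambda}(k)$, then since the two nodes have the same residue, condition ii) forces $r\ge r_0+2$: for $j\neq j_0$ the row difference is congruent to $\kappa_j-\kappa_{j_0}\not\equiv 0,\pm 1 \pmod e$, and for $j=j_0$ it is a nonzero multiple of $e$, while the case $r=r_0$, $j>j_0$ is excluded outright; so $\bT(k)$ lies at least two rows below $\bT^{\blambda}(k)$. Second, because $\bT(k)$ is an addable node of $\shape(\bT\!\mid_{k-1})$ (this is where standardness of $\bT$ enters), the node $(r-1,1,j)$ directly above it equals $\bT(j')$ for some $j'<k$; its row is $r-1\ge r_0+1$, hence $(r-1,1,j)\lhd\bT^{\blambda}(k)\lhd\bT^{\blambda}(j')$, contradicting the inductive hypothesis $\bT(j')\unrhd\bT^{\blambda}(j')$. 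Without an argument of this kind (same residues plus condition ii) giving the two-row gap, then standardness plus the inductive hypothesis giving the contradiction), your induction does not go through, so the proposal has a genuine gap at its central step.
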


\begin{proof}
  Set $ \bmu := \shape(\bT)$.
  By Lemma \ref{order-bijection} it is enough to find a bijection $\Theta:[\blambda]\rightarrow [\bmu]$
  such that $\Theta(\gamma)\unrhd\gamma$ for all $\gamma\in[\blambda].$
  Our candidate for this bijection is $\Theta:=\bT\circ(\bT^{\blambda})^{-1}$.
  Surely $\Theta$ is a bijection so let us check that $\Theta$ satisfies the order condition.
  Assume to the contrary that there is $ \gamma = \bT^{\blambda}(k) \in [\blambda] $ such
  that $\Theta(\gamma)\lhd \gamma$, or equivalently
  $\bT(k)\lhd \bT^{\blambda}(k)$, and let $ k_0 $ be the minimal such $ k $.
  Let $\bT^{\blambda}(k_0)=(r_0,1,j_0)$ and $\bT(k_0)=(r,1,j)$.
  By strong adjacency-freeness of $ \kappa$, and the fact that
  $\bT^{\blambda}(k_0)$ and $\bT(k_0) $ have the same residue, 
  we have that $r>r_0+1$, that is $ \bT(k_0) $ is located at least
  two rows below $\bT^{\blambda}(k_0)$. But by minimality of $ k_0$ we have that
  $ \bT(k) $ is located above $\bT^{\blambda}(k)$ for all $ k < k_0$. This is impossible since
  $ \bT $ is standard.
\end{proof}

For the next Lemma we need the conditions $iii) $ and $ iv) $ from Definition 
\ref{strongadj} of strong adjacency-freeness.

\begin{lemma}\label{tableauxclassesdominantGarnir}
  Let $ \blambda \in \OnePar $ be a one-column multipartition and let $ \Bg $ be 
Garnir tableau of shape $ \blambda $. 
Let $ \bT \in [ \Bg ] \setminus \{ \Bg \} $ and suppose that $ \shape(\bT) \in \OnePar$. Then
$ \shape(\bT) \rhd  {\blambda}$.

\end{lemma}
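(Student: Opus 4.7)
My plan is to adapt the proof of Lemma \ref{tableauxclassesdominant}, with the Garnir tableau $\Bg$ taking the role previously played by $\bT^{\blambda}$. Setting $\bmu := \shape(\bT)$ and defining the bijection $\Theta := \bT \circ \Bg^{-1} : [\blambda] \to [\bmu]$, I first aim to show that $\Theta(\sigma) \unrhd \sigma$ for every $\sigma \in [\blambda]$. By Lemma \ref{order-bijection} this will give $\bmu \unrhd \blambda$, and since the nodes of a one-column multipartition are totally ordered by $\unlhd$, any bijection from $[\blambda]$ to itself satisfying this pointwise inequality must be the identity. This would force $\bT = \Bg$, contradicting the hypothesis, so the strict conclusion $\bmu \rhd \blambda$ will follow.

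For the main claim I argue by contradiction: let $k_0$ be minimal with $\bT(k_0) \lhd \Bg(k_0)$, and write $\Bg(k_0) = (r_G, 1, m_G)$ and $\bT(k_0) = (r_T, 1, m_T)$. Since $\bT \sim_e \Bg$ the two nodes share a residue modulo $e$, and strong adjacency-freeness forces $r_T \geq r_G + 2$, exactly as in the proof of Lemma \ref{tableauxclassesdominant}. Standardness of $\bT$ then produces $k_1 < k_0$ with $\bT(k_1) = (r_T - 1, 1, m_T)$, a node sitting at a row strictly below $r_G$. The goal is to derive $\bT(k_1) \lhd \Bg(k_1)$, contradicting the minimality of $k_0$.

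The new difficulty is that $\Bg$ is not globally $\unrhd$-decreasing: by Lemma \ref{description garnir} it decreases on each of $[1, i_0]$ and $[i_0+1, n]$, but jumps upward at $i_0$ with $\Bg(i_0) = \gamma$ and $\Bg(i_0+1) = \gamma^+$. I therefore split into two sub-cases. If $\Bg(k_1) \rhd \Bg(k_0)$, then chaining $\Bg(k_1) \rhd \Bg(k_0) \rhd \bT(k_1)$ immediately does the job. If instead $\Bg(k_1) \unlhd \Bg(k_0)$, the monotonicity of $\Bg$ on the two ranges forces $k_1 \leq i_0 < i_0 + 1 \leq k_0$; and since $\Bg$ agrees with $\bT^{\blambda}$ outside of $\bn_{\Snake(\gamma)}$, a direct comparison with $\bT^{\blambda}$ on the interval preceding the snake rules out $k_1 < a$, so $\Bg(k_1)$ must be one of the snake nodes $\tau_j \in [\gamma, \gamma^+]$.

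The structural observation that closes the argument is that, for a one-column multipartition, every node of $[\gamma, \gamma^+]$ lies in one of the two rows $r_G - 1$ or $r_G$. Since $\bT(k_1)$ sits at row $r_T - 1 \geq r_G + 1$, it is strictly below every snake node, so $\bT(k_1) \lhd \Bg(k_1)$, again contradicting the minimality of $k_0$. This second sub-case is the main obstacle, and it is here that conditions (iii) and (iv) of strong adjacency-freeness play their role: together with (ii), and in the same spirit as in Lemma \ref{garnireq}, they ensure that the residues distributed along the snake are sufficiently separated that the row lower bound $r_T \geq r_G + 2$ cannot be weakened, and hence that $\bT(k_1)$ is truly forced into a row strictly below the snake.
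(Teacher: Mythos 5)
Your overall strategy is essentially the paper's (the bijection $\Theta=\bT\circ\Bg^{-1}$, a minimal violation $k_0$, the row drop $r_T\ge r_G+2$ from residue equality plus adjacency-freeness, and a contradiction with minimality at the entry $k_1$ filling the node above $\bT(k_0)$), and your Case A as well as the reduction of Case B to $\Bg(k_1)\in\Snake(\gamma)$ are correct. The gap is in the closing step of Case B. Write $\gamma=(r_\gamma,1,m_\gamma)$. Your ``structural observation'' that every node of $[\gamma,\gamma^+]$ lies in rows $r_G-1$ or $r_G$ silently identifies $r_G$, the row of $\Bg(k_0)$, with $r_\gamma$; but in Case B all you know is $k_0\ge i_0+1$, hence $\Bg(k_0)\unlhd\gamma^+$, so $\Bg(k_0)$ may be $\gamma^+$ itself or another snake node in row $r_\gamma-1$. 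In that configuration the snake occupies rows $r_G$ and $r_G+1$, and $r_T\ge r_G+2$ only places $\bT(k_1)=(r_T-1,1,m_T)$ in row $\ge r_\gamma$, i.e.\ possibly in the \emph{bottom} row of the snake, where $\Bg(k_1)$ may also sit; rows alone no longer give $\bT(k_1)\lhd\Bg(k_1)$, and your asserted conclusion (``strictly below the snake'') is simply false there.

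This is precisely where the paper does concrete work with conditions $ii)$--$iv)$. In the problematic configuration ($r_G=r_\gamma-1$, $r_T=r_\gamma+1$, $m_G\ge m_\gamma$) residue equality reads $\kappa_{m_T}\equiv\kappa_{m_G}+2 \ (\mathrm{mod}\ e)$; condition $iii)$ rules out the wrap-around solution and condition $iv)$ then forces $m_T>m_G\ge m_\gamma$ --- the paper's ``$j=j_0+1$'' shift. Since every snake node in row $r_\gamma$ has component at most $m_\gamma<m_T$, one still gets $\bT(k_1)\lhd\Bg(k_1)$ even when both lie in row $r_\gamma$, so your contradiction can be restored --- but this residue computation (or the paper's equivalent ``gap''/standardness argument at $\bT(k_0)$) is exactly the missing ingredient. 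Your final paragraph only asserts that (iii) and (iv) ``ensure'' the conclusion, and moreover aims at the wrong statement: the issue is not that $r_T\ge r_G+2$ might be weakened, but that $r_G$ need not equal $r_\gamma$, so the argument as written does not close.
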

\begin{proof}
We shall follow the same approach as in the proof of the previous Lemma.  
  Set $ \bmu := \shape(\bT)$.
  {\color{black}{As in the previous Lemma 
  it is enough to find a bijection $\Theta:[\blambda]\rightarrow [\bmu]$
  such that $\Theta(\beta)\unrhd\beta$ for all $\beta\in[\blambda].$}}
  This time the candidate for the bijection is $\Theta:=\bT\circ(\Bg)^{-1}$.
  {\color{black}{This}}
  $\Theta$ is {\color{black}{also clearly}}
  a bijection so we must check that $\Theta$ satisfies the order condition.
  Assume to the contrary that there is $ {\color{black}{\beta}} = \Bg(k) \in [\blambda] $ such
  that $\Theta({\color{black}{\beta}})\lhd {\color{black}{\beta}}$, or equivalently
  $\bT(k)\lhd \Bg(k)$, and let $ k_0 $ be the minimal such $ k $.
  Let $\Bg(k_0)=(r_0,1,j_0)$ and $\bT(k_0)=(r,1,j)$.
  Using the previous Lemma, {\color{black}{and part $ (3) $ of the characterization of
      Garnir tableaux given in Lemma \ref{description garnir},
    we conclude that $ \Bg(k_0) \in  \Snake(\gamma) $, where $ \gamma $ is the special
    node for the Garnir tableau $ \Bg $, according to Lemma \ref{description garnir}.}}
    But then from
  strong adjacency-freeness of $ \hat{\kappa} $ we conclude that $ r= r_0 +2$, since
  there are no nodes of the same residue
  in consecutive rows of $ \blambda$, {\color{black}{that is $\bT(k_0)=(r,1,j)$ is situated two rows
      below $\Bg(k_0)=(r_0,1,j_0)$.}}
  {\color{black}{On the other hand, using condition $iv) $ of the Definition \ref{strongadj}
      of strong adjacency-freeness,
      we get that those nodes in the $ r$'th row of $[ {\rm res}(\bT^{\blambda})] $ that have the same
      residues as nodes in the $ r_0$'th row, are all shifted one to the right. In other words, we have that $ j=j_0+1$. 
      But this produces a gap between $\bT(k_0)$ and $ \Snake(\gamma) $ and so $\bT$ cannot be standard.
  }}The Lemma is proved.

\medskip
  
Let us illustrate this last point on the following
example with $ \blambda = ( (1^{11}), (1^{11}), (1^{11}),(1^{10}),
  (1),  (1^{2}) ) $, $ e = {\color{black}{13}} $ and  $ \gamma = ({\color{black}{9}},1,2) $:
\begin{equation}
{\color{black}{\Bg}}= 
\left( \gyoung(;1,;<7>,;\doce,;<16>,;<20>,;<24>,;<28>,;<32>,;<\treintatresR>,;<40>,;<\cuarentacuatro>),
\gyoung(;2,;8,;<13>,;<17>,;<21>,;<25>,;<29>,;<\color{red}35>,;<\color{red}34>,;<41>,;<45>), 
\gyoung(;3,;9,;<14>,;<18>,;<\veintidos>,;<26>,;<30>,;<\color{red}36>,;<38>,;<42>,;<46>),
\gyoung(;4,;\diez,;<15>,;<19>,;<23>,;<27>,;<31>,;<\color{red}37>,;<39>,;<43>,;<47>), 
\gyoung(;5,,,,,,,,,,),\gyoung(;6,;<\once>,,,,,,,,,)
\right),
{\color{black}{[ {\rm res}(\bT^{\blambda})]}}= 
\left( \gyoung(;0,;<\doce>,;\once,;\diez,;9,;8,;7,;6,;<\color{red}5>,;4,;3),
\gyoung(;2,;1,;0,;<\doce>,;\once,;\diez,;9,;<\color{red}8>,;<\color{red}7>,;<6>,;<5>), 
\gyoung(;4,;3,;2,;1,;0,;<\doce>,;\once,;\diezR,;9,;8,;7),
\gyoung(;6,;5,;4,;3,;2,;1,;0,;<\doceR>,;\once,;\diez,;9), 
\gyoung(;8,,,,,,,,,,),\gyoung(;\diez,;9,,,,,,,,,)
\right). 
\end{equation}  
The numbers appearing in $ \Snake(\gamma) $ of $ \Bg $ have been colored red.
{\color{black}{We are supposing that $ \bT \lhd \Bg $. Consider the case
    where $\Bg(k_0)=(8,1,2)$, that is $ k_0 = 35$.
Then for $ \bT $ to be standard we must have either $ \bT(35) = \Bg(40) $ or
$ \bT(35) = \Bg(41) $. But $ \bT(35)$ is of residue $ 8$ whereas neither $ \Bg(40) $ nor
$ \Bg(41) $ is of residue 8, and so we get the desired contradiction in this case. The other cases for
$\Bg(k_0) $ are
treated similarly.}}

{\color{black}{For completeness, we now give a}} tableau $ \bT $ in $ [\Bg] $.
{\color{black}{One checks easily that $ \bT \rhd \Bg $}}.
\begin{equation} \bT=
\left( \gyoung(;1,;<7>,;\doce,;<16>,;<20>,;<24>,;<28>,;<32>,;<\treintatresR>,;<40>,;<\cuarentacuatro>),
\gyoung(;2,;8,;<13>,;<17>,;<21>,;<25>,;<29>,,,,), 
\gyoung(;3,;9,;<14>,;<18>,;<\veintidos>,;<26>,;<30>,;<\color{red}36>,;<38>,;<42>,;<46>),
\gyoung(;4,;\diez,;<15>,;<19>,;<23>,;<27>,;<31>,;<\color{red}37>,;<39>,;<43>,;<47>), 
\gyoung(;5,;<\color{red}34>,;<41>,;<45>,,,,,,,),\gyoung(;6,;<\once>,;<\color{red}35>,,,,,,,,)
\right).
\end{equation}
\end{proof}  

We can now generalize the first statement of Lemma \ref{uppertriangularI}.

\begin{lemma}\label{uppertriangularIdom}
 For $ \blambda $ any one-column multipartition and any $ k $ we have that
\begin{equation}{\label{trian1dom}}
  y_k e(\bi^{\blambda}) = e(\bi^{\blambda}) y_k =  \sum_{ \Bs, \bT \in \std(\blambda), \bmu \rhd \blambda}
  c_{\Bs \bT}  m_{\Bs \bT}
\end{equation}
where the sum runs over one-column multipartitions $ \bmu $ of $n$ and $ c_{\Bs \bT} \in \F$.
\end{lemma}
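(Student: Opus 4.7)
The plan is to bootstrap from the weaker statement of Lemma~\ref{uppertriangularI} by using the linear independence of $\Basis$ together with the Brundan-Kleshchev-Rouquier isomorphism and the combinatorial input from Lemma~\ref{tableauxclassesdominant}.

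First I would combine Lemma~\ref{uppertriangularI}, Lemma~\ref{westart} and Lemma~\ref{firstMain} to write
\begin{equation}\label{planeq1}
y_k e(\bi^{\blambda}) \, = \, \sum_{\substack{\bmu \in \OnePar \\ \bmu > \blambda}} \sum_{\Bs, \bT \in \std(\bmu)} c_{\Bs \bT}^{\bmu}\, m_{\Bs \bT},
\end{equation}
for uniquely determined $c_{\Bs \bT}^{\bmu} \in \F$. (The second equality in (\ref{trian1dom}) is automatic from (\ref{eq3}).) The remaining task is to upgrade the lexicographic condition $\bmu > \blambda$ appearing here to the dominance condition $\bmu \rhd \blambda$, that is to show $c_{\Bs \bT}^{\bmu} = 0$ whenever $\bmu \not\rhd \blambda$.

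Next I would transport (\ref{planeq1}) across the BK-R isomorphism $f : \R \to \HH$. Lifting $y_k e(\bi^{\blambda}) \in \B$ to the same element in $\R$, a direct computation using (\ref{givenbythe}), (\ref{firstlift}) and (\ref{knownconstanst}), and then passing to the lift (\ref{thirdlift}) in $\HHK$, yields
\begin{equation}\label{planeq2}
f(y_k e(\bi^{\blambda})) \, = \, \sum_{\bT \in [\bT^{\blambda}]} \tfrac{\beta_{\bT}}{\gamma_{\bT}}\, f_{\bT \bT}, \qquad \beta_{\bT} \, := \, 1 - \tfrac{c_{\bT}^{\K}(k)}{\widehat{c_{\bT^{\blambda}}}(k)} \, \in \, \OO,
\end{equation}
where only diagonal seminormal basis elements $f_{\bT \bT}$ indexed by $\bT \in [\bT^{\blambda}]$ appear, and where $\beta_{\bT^{\blambda}} = 0$. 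On the other hand, applying the triangular expansion (\ref{analyse3}) from the proof of linear independence to each $f(m_{\Bs \bT})$ on the right-hand side of (\ref{planeq1}), together with (\ref{combining}) for the correction term $f(p)$ arising from the lift of (\ref{planeq1}) from $\B$ to $\R$, produces an expansion of $f(y_k e(\bi^{\blambda}))$ in the seminormal basis whose leading terms have a controlled triangular structure with respect to both the pair order $\lhd$ and the shape order $\rhd$.

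The heart of the argument is then a proof by contradiction. Suppose some $c_{\Bs_0 \bT_0}^{\bmu_0} \neq 0$ with $\bmu_0 \in \OnePar$, $\bmu_0 \neq \blambda$, and $\bmu_0 \not\rhd \blambda$. Choose $\bmu_0$ minimal with respect to $\rhd$ among all offending shapes, and then choose $(\Bs_0, \bT_0) \in \std(\bmu_0)^2$ minimal with respect to the pair order $\lhd$ among all pairs of shape $\bmu_0$ with nonzero coefficient. With this two-level minimality, the coefficient of $f_{\Bs_0 \bT_0}$ on the right-hand side is exactly $c_{\Bs_0 \bT_0}^{\bmu_0}\, \alpha_{\Bs_0 \bT_0} \neq 0$: contributions from $m_{\Bs \bT}$ of shape $\bmu_0$ with $(\Bs, \bT) \lhd (\Bs_0, \bT_0)$ vanish by pair minimality; contributions from shapes $\bmu \lhd \bmu_0$ vanish by shape minimality; contributions from shapes $\bmu \rhd \bmu_0$ produce only $f_{\Bu \Bv}$ of shape strictly dominating $\bmu_0$; and $f(p)$ lives in the span of $f_{\Bu \Bv}$ with $\shape \notin \OnePar$. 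On the left-hand side, inspection of (\ref{planeq2}) shows that the coefficient of $f_{\Bs_0 \bT_0}$ is nonzero only if $\Bs_0 = \bT_0 \in [\bT^{\blambda}]$; but since $\shape(\Bs_0) = \bmu_0 \in \OnePar$, Lemma~\ref{tableauxclassesdominant} then forces either $\Bs_0 = \bT^{\blambda}$ (whence $\bmu_0 = \blambda$) or $\bmu_0 \rhd \blambda$, both contradicting the choice of $\bmu_0$. The left-hand side coefficient is therefore zero, yielding $c_{\Bs_0 \bT_0}^{\bmu_0} = 0$, the required contradiction.

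The main obstacle in this plan is the bookkeeping needed to guarantee that no accidental cancellations occur for the coefficient of $f_{\Bs_0 \bT_0}$ when combining the many triangular contributions on the right-hand side; this bookkeeping is essentially identical to the one performed in the proof of the linear independence theorem, so no genuinely new difficulty arises.
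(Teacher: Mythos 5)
There is a genuine gap at the step where you claim that, after your two minimality choices, the coefficient of $f_{\Bs_0\bT_0}$ on the right-hand side is exactly $c^{\bmu_0}_{\Bs_0\bT_0}\,\alpha_{\Bs_0\bT_0}\neq 0$. Your case analysis splits the remaining shapes $\bmu$ into $\bmu\lhd\bmu_0$ and $\bmu\rhd\bmu_0$, but $\rhd$ is only a partial order on $\OnePar$ for $l\geq 3$, so shapes dominance-incomparable to $\bmu_0$ are not covered -- and these are exactly the dangerous ones: a shape $\bmu$ with $\bmu\rhd\blambda$ (hence a legitimately nonzero, non-offending coefficient that neither minimality choice can eliminate) may be incomparable to $\bmu_0$ and lexicographically smaller than it. The error terms in (\ref{analyse3}) are only controlled lexicographically: the third summand, coming from (\ref{combining10}) and ultimately from Lemma \ref{uppertriangularI}, is a span of $f_{\Bu\Bv}$ whose shapes satisfy $\bnu>\bmu$ in the total order $<$, not $\bnu\rhd\bmu$. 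Hence the expansion of $f(m_{\Bs\bT})$ for such a good shape $\bmu$ with $\bmu<\bmu_0$ can contain $f_{\Bs_0\bT_0}$ with an uncontrolled coefficient, and the asserted absence of cancellation does not follow. Your statement that shapes $\bmu\rhd\bmu_0$ ``produce only $f_{\Bu\Bv}$ of shape strictly dominating $\bmu_0$'' tacitly assumes a dominance-refined version of (\ref{analyse3}); that refinement is not available from Section 5, and obtaining it is essentially the content of the lemma you are trying to prove. The plan is repairable: expanding $f(m_{\Bs\bT})=f(\psi^{\ast}_{d(\Bs)})\,E_{[\bT^{\bmu}]}\,f(\psi_{d(\bT)})$ entirely inside $\HHK$ and using that left and right multiplication preserve the row and column spans attached to each $F_{\bT'}$, one finds that $f(m_{\Bs\bT})$ lies in the span of $f_{\Bu\Bv}$ whose common shape is the shape of some standard $\bT'\in[\bT^{\bmu}]$, hence by Lemma \ref{tableauxclassesdominant} equal to $\bmu$, strictly dominating $\bmu$, or not one-column; with this substitute for (\ref{analyse3}) your minimal-counterexample argument closes. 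As written, however, the proof is incomplete.

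You should also be aware that the paper's own proof is far shorter and avoids seminormal forms entirely: since $y_k e(\bi^{\blambda})=e(\bi^{\blambda})\,y_k\,e(\bi^{\blambda})$, left multiplication of the basis expansion of $y_k e(\bi^{\blambda})$ by $e(\bi^{\blambda})$ kills every $m_{\Bs\bT}$ with $\bi^{\Bs}\neq\bi^{\blambda}$, so only terms with $\Bs\in[\bT^{\blambda}]$ survive, and Lemma \ref{tableauxclassesdominant} then gives $\shape(\Bs)\rhd\blambda$ directly. Your final appeal to Lemma \ref{tableauxclassesdominant} is the same key ingredient, but the detour through the Brundan--Kleshchev--Rouquier isomorphism and the seminormal basis is unnecessary once the basis property of $\Basis$ is in hand.
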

\begin{proof}
We first note that by construction of the $ m_{\Bs \bT}$'s we have that 
\begin{equation}\label{byconstruction}
  e(\bi )  m_{\Bs \bT} = \left\{ \begin{array}{ll} m_{\Bs \bT} & \mbox{ if } \bi = \bi^{\Bs} \\ 0
    & \mbox{ otherwise.}  \end{array} \right.
\end{equation}
Let us now consider the expansion of
$ y_k e(\bi^{\blambda}) $ in the basis $ \Basis$:
\begin{equation} y_k e(\bi^{\blambda})
=  \sum_{ \Bs, \bT \in \std(\blambda), \blambda \in \OnePar}
  c_{\Bs \bT}  m_{\Bs \bT}
\end{equation}
where $ c_{\Bs \bT}  \in \F$.
We have that 
\begin{equation}
\sum_{ \Bs, \bT \in \std(\blambda), \blambda \in \OnePar}
  c_{\Bs \bT}   m_{\Bs \bT} = 
  y_k   e(\bi^{\blambda})  = 
  e(\bi^{\blambda}) y_k e(\bi^{\blambda}) 
=  \sum_{ \Bs, \bT \in \std(\blambda), \blambda \in \OnePar}
  c_{\Bs \bT}  e(\bi^{\blambda}) m_{\Bs \bT}
\end{equation}
and hence we get via (\ref{byconstruction}) that $ \bT \in [ \bi^{\blambda} ] $ 
whenever $ c_{\Bs \bT} \neq 0$ and so also $ \shape(\Bs) \rhd \blambda $, via Lemma \ref{uppertriangularIdom}.
The Lemma is proved.
\end{proof}

We can also generalize the third statement (\ref{trian21}) of Lemma \ref{uppertriangularI} in the relevant case of
a Garnir tableau $ \Bg$.
\begin{lemma}\label{thirdrelevant}
Let $ \Bg$ be a Garnir tableau for the multipartition $ \blambda$. Then we have an expansion of the form 
\begin{equation}
e(\bi^{\Bg}) = \sum_{\Bs, \bT \in \std(\bmu), \bmu \rhd \blambda } c_{\Bs  \bT} m_{\Bs \bT} 
\end{equation}
where $ c_{\Bs \bT} \in \F$.
\end{lemma}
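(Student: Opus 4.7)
The plan is to derive Lemma \ref{thirdrelevant} quickly from the fact, already established in the preceding theorem, that $\Basis$ is an $\F$-basis of $\B$, combined with the dominance estimate of Lemma \ref{tableauxclassesdominantGarnir} and the idempotent selection formula (\ref{byconstruction}). No further diagrammatic manipulation should be necessary.

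First I would simply expand $e(\bi^{\Bg})$ in the basis $\Basis$:
\begin{equation}
e(\bi^{\Bg}) \;=\; \sum_{\bnu \in \OnePar}\;\sum_{\Bs,\bT \in \std(\bnu)} c_{\Bs \bT}\, m_{\Bs \bT},
\qquad c_{\Bs \bT} \in \F .
\end{equation}
The key observation is then that $e(\bi^{\Bg})$ is an idempotent and one has the orthogonality relation $e(\bi)\, m_{\Bs \bT} = \delta_{\bi,\bi^{\Bs}} m_{\Bs \bT}$ established in (\ref{byconstruction}); by applying the antiinvolution $\ast$ one also gets the companion relation $m_{\Bs \bT}\, e(\bi) = \delta_{\bi,\bi^{\bT}} m_{\Bs \bT}$. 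Sandwiching the displayed expansion between two copies of $e(\bi^{\Bg})$ therefore yields
\begin{equation}
e(\bi^{\Bg}) \;=\; e(\bi^{\Bg})\, e(\bi^{\Bg})\, e(\bi^{\Bg}) \;=\; \sum_{\substack{\bnu \in \OnePar \\ \Bs,\bT \in \std(\bnu)\cap[\Bg]}} c_{\Bs \bT}\, m_{\Bs \bT},
\end{equation}
so that only those pairs $(\Bs,\bT)$ lying entirely in the class $[\Bg]$ survive.

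To finish, I would observe that $\Bg$ itself is non-standard by the very definition of a Garnir tableau, hence $\Bg \notin \std(\bnu)$ for any $\bnu$; in particular every $\Bs$ (and every $\bT$) appearing in the surviving sum is an element of $[\Bg]\setminus\{\Bg\}$ which is standard and of shape $\bnu \in \OnePar$. Lemma \ref{tableauxclassesdominantGarnir} is tailor-made for exactly this situation and gives $\shape(\Bs)=\bnu \rhd \blambda$ for each such $\Bs$. This upgrades the indexing of the surviving sum from ``$\bnu \in \OnePar$'' to ``$\bnu \rhd \blambda$'', yielding the asserted expansion.

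I do not anticipate any real obstacle: once the three inputs — the basis property of $\Basis$, the selection formulas for $e(\bi)$ on $m_{\Bs \bT}$, and Lemma \ref{tableauxclassesdominantGarnir} — are in hand, the proof is essentially bookkeeping. The only point that requires a moment of thought is making sure one uses the two-sided action of $e(\bi^{\Bg})$ (so that both $\Bs$ and $\bT$ are forced into $[\Bg]$), but this is automatic from idempotency.
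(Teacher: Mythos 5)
Your proposal is correct and follows essentially the same route as the paper: expand $e(\bi^{\Bg})$ in the basis $\Basis$, use the idempotent selection relation (\ref{byconstruction}) (the argument of Lemma \ref{uppertriangularIdom}) to force the surviving standard tableaux into the class $[\Bg]$, and then apply Lemma \ref{tableauxclassesdominantGarnir} to get $\shape(\Bs) \rhd \blambda$. The only cosmetic difference is that the paper first quotes Lemmas \ref{uppertriangularI} and \ref{westart} to start from an expansion over $\bmu > \blambda$, a restriction your argument shows is not actually needed.
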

\begin{proof}
From the Lemmas \ref{uppertriangularI} and \ref{westart} we have the expansion
\begin{equation}
e(\bi^{\Bg}) = \sum_{\Bs, \bT \in \std(\bmu), \bmu > \blambda } c_{\Bs  \bT} m_{\Bs \bT} 
\end{equation}
with unique coefficients $ c_{\Bs  \bT} \in \F $ since the $  m_{\Bs \bT}$'s are a basis.
Thus arguing as in the previous Lemma \ref{uppertriangularIdom} we get that $ {\Bs}\in [\Bg]   $ 
and so $ \shape(\Bs)  \rhd \blambda$ by Lemma \ref{tableauxclassesdominantGarnir}. 
\end{proof}

The following Lemma generalizes Lemma \ref{firstMain}, replacing $ < $ 
by $ \lhd$.
\begin{lemma}\label{firstMainI}
  Suppose that $ \blambda \in \OnePar $ and that $ \Bs, \bT \in \tab(\blambda) $.
  If $ \bT \in \nstd(\blambda) $ then there is an expansion
  \begin{equation}\label{upexpansion}
    m_{\Bs \bT} = \sum_{\bT_1 \in \std(\blambda),  \bT_1 \rhd \bT,  } c_{ \Bs \bT_1} m_{\Bs \bT_1}  +
    \sum_{ \bmu \rhd \blambda, \Bs_2, \bT_2 \in \std(\bmu)} c_{\Bs_2 \bT_2} m_{\Bs_2 \bT_2} 
  \end{equation}
where $c_{ \Bs \bT_1}, c_{\Bs_2 \bT_2} \in \F $. A similar statement holds for $ \Bs$.
\end{lemma}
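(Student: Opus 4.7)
The plan is to run the proof of Lemma~\ref{firstMain} line by line, but with the total order $<$ systematically replaced by the partial order $\rhd$ and with the old triangularity statements swapped for their newly strengthened counterparts. Concretely, every appeal to Lemma~\ref{uppertriangularI} is to be replaced by an appeal to Lemma~\ref{uppertriangularIdom}, and every appeal to Lemma~\ref{westart} is to be replaced by the following $\rhd$-sharpening of it, which I would establish first: if $D_\blambda \in \B$ factorizes over $e(\bi^\blambda)$, then
\begin{equation*}
D_\blambda = \sum_{\bmu \unrhd \blambda,\ \Bs,\bT \in \tab(\bmu)} c_{\Bs\bT}\, m_{\Bs\bT}.
\end{equation*}
The proof of this sharpening would proceed by downward induction on $\rhd$ exactly as in Lemma~\ref{westart}: after expanding in the spanning set $\mathcal{S}$ of~(\ref{KLRbasis}), the error produced by commuting the $y$-variables through $e(\bi^\blambda)$ is controlled by Lemma~\ref{uppertriangularIdom} and therefore lies in $\spa\{m_{\Bs'\bT'} : \shape \rhd \blambda\}$, while the principal term $\psi_v\,e(\bi^\blambda)\,\psi_w$ contributes exactly the $m_{\Bs\bT}$'s with $\Bs,\bT \in \tab(\blambda)$.

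Armed with this sharpening I would then carry out the main proof by downward induction on $\lhd$ over $\bT \in \nstd(\blambda)$. For the inductive base, $\bT = \Bg$ is a Garnir tableau (by Lemma~\ref{lemma garnir1}(b)); using relation~(\ref{eq4}) we rewrite $m_{\Bs\Bg} = \psi_{d(\Bs)}^{\ast}\,\psi_{d(\Bg)}\, e(\bi^{\Bg})$, then invoke Lemma~\ref{thirdrelevant} to expand $e(\bi^{\Bg})$ as a combination of $m_{\Bs_2\bT_2}$ with $\shape(\Bs_2) \rhd \blambda$. Each resulting summand $\psi_{d(\Bs)}^{\ast}\,\psi_{d(\Bg)}\, m_{\Bs_2\bT_2}$ still factorizes over $e(\bi^{\bmu_2})$, so applying the $\rhd$-sharpening of Lemma~\ref{westart} places it in $\spa\{m_{\Bs'\bT'} : \shape \unrhd \bmu_2 \rhd \blambda\}$, as required. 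For a general non-standard $\bT$ I would invoke Corollary~\ref{fundamental-garnir1} to factor $\bT = \Bg w$ with $\ell(d(\bT)) = \ell(d(\Bg)) + \ell(w)$, so that $d(\bT)$ admits a reduced expression extending one for $d(\Bg)$. If this expression is the official one, the Garnir case already settled takes care of $m_{\Bs\bT}$; otherwise, the discrepancy with the official reduced expression is handled via~(\ref{official}), whose $y$-factor errors are absorbed by the $\rhd$-sharpening of Lemma~\ref{westart} and whose $\psi_v$-errors with $v > d(\bT)$ correspond, via Theorem~\ref{Teo-Ehr}, to tableaux $\bT_1 \rhd \bT$, on which the inner inductive hypothesis bites in the non-standard case. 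The analogous claim for $\Bs$ then follows by applying the antiinvolution $\ast$.

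The principal obstacle will be the $\rhd$-sharpening of Lemma~\ref{westart}, for the weaker partial order $\rhd$ forces one to check at each step that the error terms really lift to the dominance order and not merely to the total refinement $<$; this is exactly what Lemmas~\ref{uppertriangularIdom} and~\ref{tableauxclassesdominant} were designed to guarantee. Once that sharpening is secured, the remainder of the argument is a faithful transcription of the proof of Lemma~\ref{firstMain}, with $>$ replaced by $\rhd$ and with Lemma~\ref{thirdrelevant} replacing the combined use of Lemmas~\ref{uppertriangularI} and~\ref{westart} at the Garnir step.
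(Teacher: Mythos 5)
Your overall route is the paper's route: the paper's own proof of this Lemma is literally ``go through the proof of Lemma \ref{firstMain} replacing $>$ by $\rhd$, using Lemma \ref{uppertriangularIdom} in place of (\ref{trian1}) and Lemma \ref{thirdrelevant} at the Garnir steps'', and your plan does exactly that. Your explicit $\rhd$-sharpening of Lemma \ref{westart} (every element of $\langle e(\bi^{\blambda})\rangle$ lies in the span of the $m_{\Bs\bT}$ with $\shape \unrhd \blambda$, tableaux allowed to be non-standard) is correct and provable by the downward induction on $\rhd$ you describe, since the $y$-errors land, via Lemma \ref{uppertriangularIdom}, in ideals $\langle e(\bi^{\bmu})\rangle$ with $\bmu \rhd \blambda$ where the inductive hypothesis applies; this makes explicit an ingredient the paper leaves implicit.

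There is, however, one genuine gap in the way you have restructured the induction. You run only the inner downward induction on $\bT$ inside the fixed shape $\blambda$, and at the Garnir step you stop once the summands $\psi_{d(\Bs)}^{\ast}\psi_{d(\Bg)}m_{\Bs_2\bT_2}$ are placed in $\spa\{m_{\Bs'\bT'}\mid \shape \unrhd \bmu_2 \rhd \blambda\}$, declaring this ``as required''. It is not: the statement (\ref{upexpansion}) demands that the higher terms be indexed by \emph{standard} tableaux, whereas your sharpening of Lemma \ref{westart} only delivers $\Bs',\bT'\in\tab$. Converting those non-standard higher-shape terms into standard ones is precisely the Lemma itself applied at the shapes $\bmu \rhd \blambda$ (note that applying the old Lemma \ref{firstMain} instead would only give shapes $\ge \bmu$ in the total order, which need not dominate $\blambda$). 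So you cannot avoid the outer downward induction over shapes with respect to $\rhd$ that the proof of Lemma \ref{firstMain} carries with respect to $<$: the same issue recurs for the $y^{\underline{k}}$-error terms in your treatment of a general non-standard $\bT$. The fix is routine --- reinstate the outer induction on $\blambda$ (equivalently, prove by a single downward induction on $\rhd$ the combined statement that $\langle e(\bi^{\blambda})\rangle$ lies in the span of standard $m_{\Bs\bT}$ with shapes $\unrhd\blambda$, with your inner $\lhd$-induction on tableaux nested inside) --- but as written your argument proves only the weaker expansion with arbitrary tableaux in the second sum.
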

\begin{proof}
  We go through the proof of Lemma \ref{firstMainI}, checking that 
  each occurrence of $ > $ can be replaced by $ \rhd$.
  There are two types of occurrences of $ > $. The first ones are in reference to 
  ({\ref{trian1}}) of Lemma \ref{uppertriangularI}. But here 
  Lemma \ref{uppertriangularIdom} allows us to replace $ > $ by $ \rhd$.
The second ones are the use of Garnir tableaux in (\ref{indstep}) and (\ref{indstep1}). 
But in view of Lemma \ref{thirdrelevant} we can also here replace $ > $ by  $ \rhd$.
\end{proof}

The following Lemma corresponds to the JM-property of the $ y_k$'s, that we shall consider in more detail
later on.
\begin{lemma}\label{Jucys-Murphy}
  Suppose that $  m_{\Bs \bT} $ is an element of $ \Basis$. Then we have that
  \begin{equation}\label{higherterms}
y_k m_{\Bs \bT}  = \sum_{ \Bs_1 \rhd \Bs} c_{\Bs_1 \bT} m_{\Bs_1 \bT} +  \mbox{  higher terms} 
  \end{equation}
  where $ c_{\Bs_1 \bT} \in \F $ and 
  where 'higher terms' means a linear combination of $ m_{\Bs_2 \bT_2} $ where $ \shape(\Bs_2) \rhd \shape(\Bs)$.
  A similar formula holds for $y_k $ acting on the right of $ m_{\Bs \bT}$.
\end{lemma}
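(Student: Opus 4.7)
The plan is to commute $y_k$ rightward through the factor $\psi_{d(\Bs)}^*$ of
$m_{\Bs \bT} = \psi_{d(\Bs)}^* e(\bi^\blambda) \psi_{d(\bT)}$
using the relations (\ref{eq6}), (\ref{eq8}) and (\ref{eq9}).
Fix the official reduced expression $d(\Bs) = s_{i_1} \cdots s_{i_N}$, so that
$\psi_{d(\Bs)}^* = \psi_{i_N} \cdots \psi_{i_1}$, and push $y_k$ through one factor at a time.
At each crossing the move is either clean, transforming the $y$-index by a simple reflection,
or produces a Kronecker-$\delta$ error in which both the $y$ and the current $\psi_r$ disappear.
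Crucially, these $\delta$-coefficients are evaluated on the partially permuted residue
sequence rather than on $\bi^\blambda$ itself, and this permuted sequence may present adjacent
equal entries, so the errors genuinely contribute.

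Iterating, I obtain a decomposition
\begin{equation*}
y_k m_{\Bs \bT} = \psi_{d(\Bs)}^* \, y_{k'}\, e(\bi^\blambda)\, \psi_{d(\bT)} + \sum_v c_v \, \psi_v^* \, y^{\underline{a}_v}\, e(\bi^\blambda)\, \psi_{d(\bT)},
\end{equation*}
where the $v \in \Si_n$ all have length strictly less than $\ell(d(\Bs))$, arising from subword
products of $s_{i_N} \cdots s_{i_1}$, and $\underline{a}_v \in \No^n$.
Lemma \ref{uppertriangularIdom} expresses $y_{k'} e(\bi^\blambda)$ as a sum
$\sum c\, m_{\Bs'' \bT''}$ with $\shape(\Bs'') \rhd \blambda$, and flanking by
$\psi_{d(\Bs)}^*$ and $\psi_{d(\bT)}$ keeps the result factorized over
$e(\bi^{\shape(\Bs'')})$. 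The $\lhd$-version of Lemma \ref{westart}, which follows by the
same downward induction but with Lemma \ref{uppertriangularIdom} and Lemma \ref{firstMainI}
replacing Lemma \ref{uppertriangularI} and Lemma \ref{firstMain}, places this main
contribution into the shape-higher span. The same argument absorbs every error summand with
$|\underline{a}_v| > 0$, since iterating Lemma \ref{uppertriangularIdom} on
$y^{\underline{a}_v} e(\bi^\blambda)$ already lands in the shape-higher span.

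The contributions that produce the same-shape part of (\ref{higherterms}) come from the pure
$\psi$-errors $c_v\, \psi_v^*\, e(\bi^\blambda)\, \psi_{d(\bT)}$ with $\ell(v) < \ell(d(\Bs))$.
Setting $\Bu_v := \bT^\blambda v$ so that $d(\Bu_v) = v$, the identity (\ref{official}) lets me
replace $\psi_v^*$ by $\psi_{d(\Bu_v)}^*$ at the cost of correction terms whose Bruhat parameters
are even further from $d(\Bs)$, hence even higher in $\rhd$, and which carry extra $y$-factors
that are absorbed as before. This reduces each term to $c_v\, m_{\Bu_v, \bT}$ modulo the
shape-higher span. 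The inequality $\ell(v) < \ell(d(\Bs))$, combined with Ehresmann's
Theorem \ref{Teo-Ehr} and the paper's convention that $1 \in \Si_n$ is Bruhat-maximal,
yields $\Bu_v \rhd \Bs$. If $\Bu_v \notin \std(\blambda)$, an application of
Lemma \ref{firstMainI} rewrites $m_{\Bu_v, \bT}$ as a combination of $m_{\Bs_1 \bT}$ with
$\Bs_1 \in \std(\blambda)$ and $\Bs_1 \rhd \Bu_v \rhd \Bs$, together with shape-higher
contributions. Assembling everything yields (\ref{higherterms}); the statement for right
multiplication follows by applying the antiinvolution $\ast$, which fixes $y_k$, exchanges
the roles of $\Bs$ and $\bT$, and preserves the shape filtration.

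The main obstacle will be the bookkeeping of error terms. Every time one converts an arbitrary
reduced expression to the official one via (\ref{official}), or standardizes a tableau via
Lemma \ref{firstMainI}, further correction terms appear, and each must be verified to remain
either strictly $\rhd$-higher than $\Bs$ inside $\std(\blambda)$ or strictly shape-higher
than $\blambda$. Once this verification is settled, the upper-triangular form (\ref{higherterms})
emerges essentially from Ehresmann's theorem together with the triangularity already built
into Lemma \ref{uppertriangularIdom}.
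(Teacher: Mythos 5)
Your proposal is correct and follows essentially the same route as the paper's proof: commute $y_k$ through $\psi_{d(\Bs)}^{\ast}$ via relations (\ref{eq6})--(\ref{eq9}), send the fully-commuted term $\psi_{d(\Bs)}^{\ast} y_j e(\bi^{\blambda})\psi_{d(\bT)}$ to the shape-higher span via Lemma \ref{uppertriangularIdom}, treat the subexpression error terms through Theorem \ref{Teo-Ehr} and the standardization Lemma \ref{firstMainI} to get tableaux $\Bs_1 \rhd \Bs$, and obtain the right-hand version by applying $\ast$. The only difference is that you make explicit a step the paper leaves implicit, namely the dominance-order analogue of Lemma \ref{westart} needed to see that flanking the shape-higher contributions by $\psi_{d(\Bs)}^{\ast}$ and $\psi_{d(\bT)}$ keeps them in the shape-higher span, which is a welcome extra precision rather than a deviation.
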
  
\begin{proof}
  We have that $ m_{\Bs \bT}^{\ast} = m_{\bT \Bs} $ and so
we get the formula for $  m_{\Bs \bT} y_k $ by applying $ \ast$ to the formula for $ y_k m_{\Bs \bT} $.
Suppose that $ d(\Bs) =  s_{i_1}   \cdots  s_{i_{N-1}}  s_{i_N} $ is the
official reduced expression for $ d(\Bs) $ so that we have
$ \psi_{d(\Bs)} = \psi_{i_1} \cdots \psi_{i_{N-1}}   \psi_{i_N}$.
We now have from
relations (\ref{eq6}), (\ref{eq7}), (\ref{eq8}) and (\ref{eq9}) that 
\begin{equation} y_k m_{\Bs \bT}   = 
  y_k \psi_{d(\Bs)}^{\ast} e( \bi^{\blambda}) \psi_{d(\bT)} = 
\left\{ \begin{array}{ll} 
  \psi_{i_N} y_k \psi_{i_{N-1}} \cdots \psi_{i_1} e( \bi^{\blambda}) \psi_{d(\bT)} & \mbox{ if } i \neq i_N,  i_N +1 \\
  \psi_{i_N}  y_{k \pm 1} \psi_{i_{N-1}} \cdots   \psi_{i_1} e( \bi^{\blambda}) +
\delta \psi_{i_{N-1}} \cdots \psi_{i_1} e( \bi^{\blambda})
  & \mbox{ if } i =  i_N,  i_N +1 \\  
\end{array} \right.
  \end{equation}  
where $ \delta = 0, \pm 1$. Using relations (\ref{eq6}), (\ref{eq7}), (\ref{eq8}) and (\ref{eq9})  once again,
we continue commuting the appearing $ y_{k\pm 1}$'s to
the right as far as possible, until they meet $ e( \bi^{\blambda}) $.
This gives rise to a linear combination of terms of the form
\begin{equation}
\pm   \psi_{j_K}  \psi_{j_{K-1}} \cdots \psi_{j_1} e( \bi^{\blambda}) \psi_{d(\bT)}
\end{equation}
where $ s_{j_1}     \cdots  s_{j_{K-1}} s_{i_K} $ is a strict subexpression of
$ s_{i_1}  \cdots s_{i_{N-1}} s_{i_N} $, together with 
$    \psi_{d(\Bs)}^{\ast} y_j e( \bi^{\blambda}) \psi_{d(\bT)} $ for some $ j$, 
corresponding to $ y_k $ commuted all the way through $ \psi_{d(\Bs)}^{\ast} $,
But this last term belongs 
to the 'higher terms', by the previous Lemma \ref{uppertriangularIdom}.
The other terms that arise are linear combinations of $ m_{\Bs_1 \bT} $'s
where $  \Bs_1 \rhd \Bs $ by the proof of 
Theorem \ref{firstMain}. This proves the Lemma.
\end{proof}

We can now prove the promised cellularity of $ \Basis$. 

\begin{theorem}\label{Cellular}
  The pair $ (\Basis, \OnePar) $ is a graded cellular basis for $ \B$ with respect to $ \lhd $, in the sense of
  Definition \ref{cellular algebra}.
\end{theorem}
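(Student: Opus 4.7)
The plan is to verify the two axioms of Definition \ref{cellular algebra} for the pair $(\Basis, \OnePar)$ with poset order $\lhd$, indexing sets $T(\blambda) = \std(\blambda)$, and cellular elements $c^{\blambda}_{\Bs\bT} = m_{\Bs\bT}$. Since $\Basis$ has already been established as an $\F$-basis, only the involution axiom (i), the multiplicative axiom (ii), and homogeneity remain. Axiom (i) is immediate: since $\ast$ fixes every KLR generator and reverses products, $(m_{\Bs\bT})^{\ast} = (\psi_{d(\Bs)}^{\ast} e(\bi^{\blambda}) \psi_{d(\bT)})^{\ast} = \psi_{d(\bT)}^{\ast} e(\bi^{\blambda}) \psi_{d(\Bs)} = m_{\bT\Bs}$.

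For axiom (ii) it suffices to treat $a$ a generator of $\B$, that is $a \in \{e(\bj), y_k, \psi_k\}$. The case $a = e(\bj)$ reduces to the orthogonality $e(\bj) m_{\Bs\bT} = \delta_{\bj, \bi^{\Bs}} m_{\Bs\bT}$, whose structure constant is manifestly $\bT$-independent. The case $a = y_k$ is the content of Lemma \ref{Jucys-Murphy}: inspecting its proof, the scalars arise from relations (\ref{eq6})--(\ref{eq9}) applied while commuting $y_k$ through $\psi_{d(\Bs)}^{\ast}$, and the remaining expressions of the form $\psi_{d(\Bs)}^{\ast} y_j e(\bi^{\blambda}) \psi_{d(\bT)}$ lie in $\B^{\blambda}$ by Lemma \ref{uppertriangularIdom}; crucially all operations occur to the left of $\psi_{d(\bT)}$, so the coefficients depend only on $\Bs$ and $k$.

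The decisive case is $a = \psi_k$. Here I would compute $\psi_k m_{\Bs\bT} = (\psi_{d(\Bs)} \psi_k)^{\ast} e(\bi^{\blambda}) \psi_{d(\bT)}$ and expand the left factor. If appending $s_k$ to the official reduced expression for $d(\Bs)$ remains reduced, equation (\ref{official}) writes $\psi_{d(\Bs)} \psi_k = \psi_{d(\Bs s_k)} + \sum c_{\underline{k}, v} y^{\underline{k}} \psi_v$ with shorter $v$; in the opposite case the quadratic relation (\ref{eq10}) produces a comparable shorter expansion. After applying $\ast$ and then pushing the $y^{\underline{k}}$ through to meet $e(\bi^{\blambda})$ on the right, the $y$-contributions become elements of $\B^{\blambda}$ via Lemma \ref{uppertriangularIdom}. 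The principal term $\psi_{d(\Bs s_k)}^{\ast} e(\bi^{\blambda}) \psi_{d(\bT)}$ equals $m_{\Bs s_k, \bT}$ when $\Bs s_k$ is standard; when it is not, Lemma \ref{firstMainI} rewrites it as a linear combination of $m_{\Bs_1 \bT}$ with $\Bs_1 \in \std(\blambda)$ and $\Bs_1 \rhd \Bs s_k$, plus a remainder in $\B^{\blambda}$. Since $\psi_{d(\bT)}$ is never touched, the scalars depend only on $\Bs$ and $k$, yielding the required relation $\psi_k m_{\Bs\bT} \equiv \sum_{\Bs_1 \in \std(\blambda)} r_{\Bs_1, \Bs, \psi_k}\, m_{\Bs_1 \bT} \pmod{\B^{\blambda}}$.

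For the graded cellular structure, I would invoke the standard $\mathbb{Z}$-grading on $\R$ with $\deg e(\bi) = 0$, $\deg y_r e(\bi) = 2$, and $\deg \psi_r e(\bi)$ prescribed by the Cartan matrix of type $\tilde{A}^{(1)}_{e-1}$; this descends to $\B$ since the omitted relation (\ref{eq13}) is homogeneous. Defining $\deg \bT$ on $\std(\blambda)$ via the usual Brundan-Kleshchev-Wang sum of local contributions from addable and removable nodes of the same residue, a direct computation gives $\deg m_{\Bs\bT} = \deg \Bs + \deg \bT$. The main obstacle is the combinatorial bookkeeping in the $\psi_k$ case, specifically verifying that (a) the $y$-error terms really land in the $\lhd$-higher subspace $\B^{\blambda}$ rather than merely in a $<$-higher subspace, and (b) the non-standard tableaux produced by $\Bs s_k$ feed into Lemma \ref{firstMainI} without spoiling the $\bT$-independence of the coefficients; both rely essentially on the $\rhd$-refinements of the combinatorial lemmas, i.e.\ Lemma \ref{uppertriangularIdom} and Lemma \ref{firstMainI}, whose formulation was the purpose of the preceding discussion.
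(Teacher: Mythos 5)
Your proposal is correct and follows essentially the same route as the paper: axiom (i) is immediate from $\ast$, axiom (ii) is checked on the generators with $e(\bj)$ trivial, $y_k$ handled by Lemma \ref{Jucys-Murphy}, and $\psi_k$ handled by expanding the product, pushing the $y$-error terms into $\B^{\blambda}$ via Lemma \ref{uppertriangularIdom}, and straightening non-standard tableaux with Lemma \ref{firstMainI}, with homogeneity giving the grading. The only cosmetic difference is that the paper treats right multiplication by $\psi_i$ and expands $\psi_{d(\bT)}\psi_i$ wholesale in the spanning set $\mathcal S$, whereas you treat left multiplication with an explicit reduced/non-reduced case split; these are mirror images under $\ast$ and rely on the same lemmas.
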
  
\begin{proof}
  Condition ({\bf i}) of Definition \ref{cellular algebra} is easily verified so let us concentrate on
  the multiplication Condition ({\bf ii}). It is enough to check it for $ a $ any of the generators $ e(\bi) $,
  $ y_i $ and $ \psi_i $. Here the case $ a  =e(\bi) $ is easy and the case $ a = y_i$ is given by Lemma
  \ref{Jucys-Murphy}, so we are left with the case $ a = \psi_i$.
  We here consider right multiplication
on $ m_{\Bs \bT } $ with $ \psi_i$.  We first write $ \psi_{ d(\bT)} \psi_i $ as a linear combination of
the elements
$ { \mathcal S } = \{ e(\bi) \,y^{\, \underline{k}}\, \psi_w  \mid \bi \in \II^n, \underline{k} \in {\mathbb N}^n, w \in \Si_n \} $
from (\ref{KLRbasis}). Upon right multiplication we get that $ m_{\Bs \bT } \psi_i $ is a linear
combination of $  \psi_{d(\Bs)}^{\ast} e(\bi^{\blambda}) \psi_w $ modulo higher terms. 
For each appearing $ w $ we consider $ \bT_1 := \bT^{\blambda} w $ 
and get that $ \psi_{d(\Bs)}^{\ast} e(\bi^{\blambda}) \psi_w  = m_{\Bs \bT_1} $.
If $ \bT_1 $ is standard we have that $ m_{\Bs \bT_1} \in \Basis $.
Otherwise, we use Lemma \ref{firstMainI} to rewrite 
$ m_{\Bs \bT_1} $ in terms of elements of $ \Basis$, modulo higher terms.
Hence Condition ({\bf ii}) has been verified and since $ \Basis $ consists of homogeneous elements
we are done.
\end{proof}

We remark that $ \B$ even satisfies the stronger property of being a \emph{quasi-hereditary algebra}. 
This follows from Remark 3.10 of \cite{GL}.

\medskip

The following definition appears for the first time in \cite{Mat-So}. It formalizes important properties of
Jucys-Murphy elements. These properties go back to Murphy's work on the symmetric group
and the Hecke algebra of finite type $ A_n $, see \cite{Murphy2}, \cite{Murphy} and \cite{Murphy3}.
\begin{definition}{\label{JM}}
Let $A$ be an $\F$-algebra which is cellular with respect to $\mathcal{C}=\{c_{\s\T}\mid
 \lambda\in \Lambda,\s,\T\in T(\lambda)\}$. Suppose also that each set $ T(\lambda) $ is
endowed with a poset structure with order relation $ \rhd_{\lambda} $.
Then we say that a commuting subset
 $\mathcal{L}=\{L_1,\ldots,L_M\}\subseteq A $ is a family of JM-elements for $A$
 with respect to $\mathcal{C}$ if it satisfies that $ L_i^{\ast} = L_i $ for all $ i $ and if
 there exists a set of
 scalars $\{c_{\T}(i)\mid \T\in T(\lambda),\; 1\leq i\leq M\}$, denoted the content functions for $ \lambda $, such
 that for all $\lambda\in \Lambda$ and $\T\in T(\lambda)$ we have that
\begin{equation}{\label{contents}}
 c_{\s \T}L_i=c_{\T}(i)c_{\s \T}+ \mathop{\sum_{\V\in T(\lambda)}}_{\V \rhd_{\lambda} \T}r_{\s \V}c_{\s \V} \mod A^{\lambda}
\end{equation}
$ \mbox{ for some }r_{\s \V}\in \F$.
\end{definition}

We can now prove the following main Theorem of our paper, proving that
the Jucys-Murphy elements introduced in (\ref{jm}) give rise to JM-elements in
the sense of the previous Lemma.

\begin{theorem}\label{Cellular}
  Let $ L_i \in  \HH(q, \kappa)$ be the Jucys-Murphy element introduced in (\ref{jm})
  and define $ \JM_i := f^{-1}(L_i) \in \R$. Then the set 
 $ \{ \JM_i \mid i=1, \ldots, n \}  $ is
  a family of $ JM$-elements for $ \B $ with respect to the cellular basis $ \Basis$.
  The corresponding content function is the one introduced in (\ref{contentF}):
\begin{equation}  c_{\Bs}(i) = q^{\textrm{res}(\Bs(i))}.  
\end{equation}  
\end{theorem}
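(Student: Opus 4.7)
The plan is to verify the three defining properties of Definition \ref{JM} in turn. The commutativity $\JM_i \JM_j = \JM_j \JM_i$ is immediate from $L_i L_j = L_j L_i$ in $\HH$ (relation (\ref{jm})) together with the fact that $f^{-1}$ is an algebra isomorphism. The self-adjointness $\JM_i^* = \JM_i$ is inherited from $L_i^* = L_i$ in $\HH$, noting that the anti-involution of $\HH$ is chosen to fix the generators $T_r, L_r$ and that $f$ intertwines the two anti-involutions.

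The heart of the proof is the content condition. The plan is first to handle the base case $\s = \T = \T^\blambda$ and then propagate to arbitrary $\s, \T \in \std(\blambda)$. For the base case we need
\[
\JM_i \, e(\bi^\blambda) \equiv c_{\T^\blambda}(i) \, e(\bi^\blambda) \mod \B^\blambda.
\]
By the Hu-Mathas description (\ref{givenbythe}) we have $f(e(\bi^\blambda)) = E_{[\T^\blambda]}$, and formula (\ref{firstlift}) applied to $\Bs = \T^\blambda$ gives, after rearrangement,
\[
L_i \, E_{[\T^\blambda]} = c_{\T^\blambda}(i) \bigl( 1 - f(y_i) \bigr) E_{[\T^\blambda]}.
\]
Pulling back along $f^{-1}$ yields the identity $\JM_i \, e(\bi^\blambda) = c_{\T^\blambda}(i) \, e(\bi^\blambda) - c_{\T^\blambda}(i) \, y_i \, e(\bi^\blambda)$ in $\B$. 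Lemma \ref{uppertriangularIdom} ensures that $y_i \, e(\bi^\blambda)$ lies in $\B^\blambda$, so the base case holds.

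To propagate to general $m_{\s\T}$, I would use the seminormal basis expansion of $f(m_{\s\T})$ derived in the proof of linear independence:
\[
f(m_{\s\T}) = \alpha_{\s\T} f_{\s\T} + \!\!\!\sum_{\Bu \rhd \s,\,\Bv \rhd \T}\!\!\! \alpha_{\Bu\Bv} f_{\Bu\Bv} + (\text{terms with shape in } \OnePar \text{ strictly above } \blambda \text{ or outside } \OnePar),
\]
combined with the eigenvalue action $f_{\Bu\Bv} L_i = c_\Bv^\K(i) f_{\Bu\Bv}$. All the $\Bv$'s appearing in the first two sums lie in the residue class $[\T^\blambda] = [\T]$, so $c_\Bv^\K(i) - c_\T(i) \in \m$. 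Subtracting $c_\T(i) f(m_{\s\T})$ from $f(m_{\s\T}) L_i$ therefore produces a $\K$-linear combination of $f_{\Bu\Bv}$'s with coefficients divisible by $\m$, together with terms of shape strictly above $\blambda$ under $\rhd$ (using Lemma \ref{tableauxclassesdominant} to convert the $\K$-level ``higher tableau'' conditions into $\B$-level ``higher shape'' conditions via the order $\rhd$ on $\OnePar$, and noting that the non-$\OnePar$-shape terms are absorbed using the argument from the linear independence proof). Rewriting the resulting element in the cellular basis $\Basis$ via Lemma \ref{westart} and Lemma \ref{firstMainI}, one gets a combination of $m_{\s\V}$'s with $\V \rhd \T$, modulo $\B^\blambda$, as required.

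The main obstacle is the careful bookkeeping in this last propagation step: one must argue that the $\m$-divisible corrections in $\HHK$ lift, after pulling back to $\HH = \B$ along $f^{-1}$, to honest elements of the span of $\{m_{\s\V} : \V \rhd \T\} \cup \B^\blambda$ and do not leak across cell modules. This is where the strong adjacency-free hypothesis on $\hat{\kappa}$ and Lemmas \ref{tableauxclassesdominant}--\ref{tableauxclassesdominantGarnir} do the essential work, exactly as in the proof of Theorem \ref{Cellular}, by ensuring that every residue class $[\T^\blambda]$ contains at most one tableau of shape $\blambda$ and that all other tableaux in the class have shape strictly higher in $\rhd$.
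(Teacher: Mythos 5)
Your base case is fine (it is essentially the paper's own identity, obtained from (\ref{firstlift}) rather than from Brundan--Kleshchev's explicit formula), but the propagation step is where your argument genuinely breaks down. First, a concrete error: for $\bT\in\std(\blambda)$ one has $[\bT]\neq[\bT^{\blambda}]$ in general, since $\bi^{\bT}$ is a nontrivial permutation of $\bi^{\blambda}$; so the tableaux $\Bv$ occurring in (\ref{analyse3}) need not lie in $[\bT]$, and the differences $c^{\K}_{\Bv}(i)-c_{\bT}(i)$ need not lie in $\m$ (this could be repaired by sandwiching the chosen lift between $E_{[\Bs]}$ and $E_{[\bT]}$, but you do not do this). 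More seriously, the coefficients $\alpha_{\Bu\Bv}$ in (\ref{analyse3}) lie only in $\K$: the seminormal basis is not an $\OO$-basis of $\HHO$ (the constants $\gamma_{\bT}$ of (\ref{knownconstanst}) may vanish at $\q=q$), so ``coefficients divisible by $\m$'' is not established, and even if it were, knowing the seminormal coefficients of a lift modulo $\m$ does not let you read off the expansion of the reduction in $\Basis$; in the paper the seminormal expansion is used only to detect one nonvanishing leading coefficient (linear independence), never to compute a product in the cellular basis. Finally, and decisively, Definition \ref{JM} requires the shape-$\blambda$ part of $m_{\Bs\bT}\JM_i$ to consist of terms $m_{\Bs\Bv}$ with the \emph{same} row index $\Bs$ and $\Bv\rhd\bT$; your seminormal bookkeeping constrains only shapes and the dominance of the tableaux appearing, and the correction terms $f_{\Bu\Bv}$ with $\Bu\rhd\Bs$ could, for all the argument shows, contribute shape-$\blambda$ basis elements whose row index differs from $\Bs$. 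So the key structural property of a JM-family is not obtained.

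The missing idea is that Brundan--Kleshchev's Theorem 1.1 gives the global identity $\JM_i=\sum_{\bi\in\II^n}q^{i_i}(1-y_i)e(\bi)$ in $\R$, not merely its restriction to $e(\bi^{\blambda})$. Hence $m_{\Bs\bT}\JM_i=c_{\bT}(i)\,m_{\Bs\bT}-c_{\bT}(i)\,m_{\Bs\bT}\,y_i$ for all $\Bs,\bT\in\std(\blambda)$, and the whole theorem reduces to the right-handed version of Lemma \ref{Jucys-Murphy}, namely that $m_{\Bs\bT}\,y_i$ is a linear combination of $m_{\Bs\bT_1}$ with $\bT_1\rhd\bT$ plus terms of strictly higher shape. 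That Lemma is proved inside the KLR presentation by commuting $y_i$ through $\psi_{d(\bT)}$, which leaves the factor $\psi_{d(\Bs)}^{\ast}e(\bi^{\blambda})$ untouched and therefore visibly preserves the row index --- exactly the point your seminormal-form route cannot reproduce. Replace your propagation paragraph by an appeal to (or a reproof of) Lemma \ref{Jucys-Murphy}; with that change the argument coincides with the paper's.
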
  
\begin{proof}
  By Theorem 1.1 of Brundan and Kleshchev's work, \cite{brundan-klesc}, we have that
  \begin{equation}
\JM_k = \sum_{\bi \in \II^{n} } q^{i_k}(1- y_k) e(\bi ) 
  \end{equation}    
  from which we get
\begin{equation} \JM_k e(\bi^{\Bs}) = (c_{\Bs}(k)- y_k ) e(\bi^{\Bs})   
\end{equation}
for any standard tableau $ \Bs$. The Theorem now follows from Lemma \ref{Jucys-Murphy}.
\end{proof}

\section{Comparison with the original definition of $ \B$. }
In this section we show that $ \B $ is isomorphic to the original generalized blob algebra, introduced
by Martin and Woodcock in \cite{MW}.  Our proof is an extension of an argument
presented in \cite{PlazaRyom}. That argument followed the suggestion of one of the referees of \cite{PlazaRyom}.

\medskip
Let $ \HHtwo$ be the cyclotomic Hecke algebra for $ n=2 $, as introduced
in Definition \ref{hecke algebra}. It follows from strong adjacency-freeness of $ \hat{\kappa} $
that $ \HHtwo$ is a semisimple $ \F$-algebra. Following \cite{MW}, for $ j= 1, \ldots, n $ we let $   e_2^j $  
be the primitive, central idempotents associated with the one-dimensional module 
given by the multipartition $ \blambda_2^j := (\emptyset, \ldots, (2), \ldots,  \emptyset) $ of 2, that has 
the partition $ (2)$ positioned in the $j$'th position.
Since $ \HHtwo \subseteq \HH$ we may consider $ e_2^j $ as an element of $ \HH $ and
so we may consider $ {\mathcal I}_n \subseteq \HH $,  the two-sided ideal generated by $ e_2^j $ for $ j = 1, \ldots, n $.
The generalized blob algebra  $\B^{\prime} $ introduced in \cite{MW} was now defined via
\begin{equation}
\B^{\prime}  := \HH/ {\mathcal I}_n.
\end{equation}  
In \cite{MW}, concrete formulas for $ e_2^j $ were found. For $ l = 2 $ these formulas gave rise to an isomorphism
between 
$ \B^{\prime} $ and the usual blob algebra.
The following Lemma gives another description of $ e_2^j $. 
\begin{lemma}
Let $  F_{\bT^{\blambda_2^j}} \in \HHKtwo $ be the idempotent defined in (\ref{idempotentK}).
Then $  F_{\bT^{\blambda_2^j}} \in  \HHOtwo $ and $ e_2^j = F_{\bT^{\blambda_2^j}} \otimes_{\OO} \F$.
\end{lemma}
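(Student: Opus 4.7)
My plan is to prove the two assertions of the lemma separately: first that $F_{\bT^{\blambda_2^j}}$ already belongs to the integral form $\HHOtwo$, and then that its image under the evaluation map $\HHOtwo \to \HHtwo$ coincides with $e_2^j$.

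For the first assertion I would show by a direct combinatorial analysis that $[\bT^{\blambda_2^j}]_e \cap \std(2) = \{\bT^{\blambda_2^j}\}$. The residue sequence of $\bT^{\blambda_2^j}$ is $(\kappa_j,\kappa_j+1)$; if $\Bs$ is any standard tableau of a multipartition of $2$ sharing this residue sequence, standardness forces $\Bs(1) = (1,1,m)$ for some $m$, and then the congruence $\kappa_m \equiv \kappa_j \pmod e$ combined with conditions $ii)$ and $iv)$ of Definition \ref{strongadj} pins $m$ down to $j$. The same two conditions rule out every addable node of $\{(1,1,j)\}$ of residue $\kappa_j+1$ except $(1,2,j)$, so $\Bs = \bT^{\blambda_2^j}$. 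The formula (\ref{andletusnowdefine}) then collapses to $E_{[\bT^{\blambda_2^j}]} = F_{\bT^{\blambda_2^j}}$, and Mathas' result from \cite{MatCoef} cited immediately after (\ref{andletusnowdefine}) gives $F_{\bT^{\blambda_2^j}} \in \HHOtwo$.

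For the second assertion I would first verify that $\HHtwo$ is semisimple. The multipartitions of $2$ are the $\blambda_2^j$, their column-transposes $(\emptyset,\ldots,(1^2),\ldots,\emptyset)$, and the multipartitions with two singleton components in positions $i<j$. A short case analysis using $ii)$, $iv)$, and the bound $e>2l\geq 4$ shows that all residue sequences arising from standard tableaux of these shapes are pairwise distinct, so the Mathas--Soriano criterion \cite{Mat-So} applies and $\HHtwo$ is semisimple. Since $\blambda_2^j$ admits a unique standard tableau, $F_{\bT^{\blambda_2^j}}$ is the primitive central idempotent of the $\blambda_2^j$-block of $\HHKtwo$, acting as the identity on the Specht module $S^{\blambda_2^j}_{\K}$ and as zero on all other Specht modules over $\K$. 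Specialising, its image in $\HHtwo$ is a central idempotent acting as the identity on $S^{\blambda_2^j}_{\F}$ and as zero on every other Specht module over $\F$; by semisimplicity this image is itself a primitive central idempotent, and since $S^{\blambda_2^j}_{\F}$ is precisely the one-dimensional irreducible named in the statement of the lemma, it equals $e_2^j$.

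The main obstacle I foresee lies in the uniqueness argument for the $\sim_e$-class in the first step: one must simultaneously rule out vertical dominoes, horizontal dominoes in components other than the $j$-th, and split configurations with a singleton in some other component. These exclusions all rest on condition $ii)$ of strong adjacency-freeness, which is precisely the hypothesis that separates the residues $\kappa_j$ and $\kappa_j\pm 1$ from the remaining $\kappa_i$.
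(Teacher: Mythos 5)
Your proof is correct, and its first half is essentially the paper's argument: you show that strong adjacency-freeness forces $[\bT^{\blambda_2^j}]_e \cap \std(2) = \{\bT^{\blambda_2^j}\}$, so that (\ref{andletusnowdefine}) collapses to $E_{[\bT^{\blambda_2^j}]} = F_{\bT^{\blambda_2^j}}$ and Mathas' integrality result from \cite{MatCoef} gives $F_{\bT^{\blambda_2^j}} \in \HHOtwo$; the paper asserts exactly this, only with less combinatorial detail (and in fact condition $ii)$ of Definition \ref{strongadj} together with $e>2$ already suffices there, condition $iv)$ is not needed). Where you genuinely diverge is in identifying the specialisation with $e_2^j$: you view $F_{\bT^{\blambda_2^j}}$ as the primitive central idempotent of the $\blambda_2^j$-block of $\HHKtwo$ (legitimate, since $\blambda_2^j$ has a unique standard tableau), track its action on Specht modules through the modular system $(\OO,\F,\K)$, and use semisimplicity of $\HHtwo$ to recognise the image as the primitive central idempotent attached to the one-dimensional module, i.e.\ $e_2^j$. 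The paper instead computes directly, via (\ref{by the contents}) together with Proposition \ref{YSF} and (\ref{knownconstanst}), that the specialised idempotent satisfies $L_1 F = q^{\kappa_j} F$, $L_2 F = q^{\kappa_j+1} F$ and $T_1 F = qF$, and then observes that these eigenvalue conditions characterise $e_2^j$ uniquely. Your route avoids the explicit seminormal-form computation of $T_1 F_{\bT^{\blambda_2^j}}$, but it leans on standard facts not spelled out in the paper, namely that $F_{\bT}$ annihilates Specht modules of shape different from $\shape(\bT)$ and that the Specht $\OO$-forms specialise compatibly, plus semisimplicity of $\HHtwo$ (which the paper asserts in the preamble and which your Ariki/Mathas--Soriano-type check confirms); the paper's route stays entirely within the tools already set up in Section 5, at the cost of the explicit coefficient computation. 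Both arguments are sound.
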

\begin{proof}
  It follows from strong adjacency-freeness of $ \hat{\kappa} $ that the only standard tableau in the
  class $ [ \bT^{\blambda_2^j}]$ is $ \bT^{\blambda_2^j}$ itself and so
\begin{equation}\label{andletusnowdefineSECOND}
E_{[  \bT^{\blambda_2^j}] } = \sum_{ \bT \in  [ \bT^{\blambda_2^j}] \cap \std(n)} F_{\bT} = F_{  \bT^{\blambda_2^j} }.
\end{equation}
Since $ E_{[  \bT^{\blambda_2^j}] } \in \HHOtwo $ this 
shows that $  F_{\bT^{\blambda_2^j}} \in  \HHOtwo $. On the other hand,  we have
by (\ref{by the contents}) that
\begin{equation}\label{cond1}
  L_i   F_{\bT^{\blambda_2^j}}   = c_{\bT^{\blambda_2^j}}(i) F_{\bT^{\blambda_2^j}} =
  \left\{ \begin{array}{ll} q^{\kappa_j} F_{\bT^{\blambda_2^j}} & \mbox{if } i = 1 \\
    q^{\kappa_j+1} F_{\bT^{\blambda_2^j}} & \mbox{if } i = 2  \end{array} \right.
\end{equation}
and moreover, using (\ref{YSF}) and (\ref{knownconstanst}), we have that
\begin{equation}\label{cond2}
  T_1 F_{\bT^{\blambda_2^j}}   = qF_{\bT^{\blambda_2^j}}.
\end{equation}
The two conditions (\ref{cond1}) and (\ref{cond2}) characterize $ e_2^j $ uniquely and so the Lemma is proved.
\end{proof}

We can now prove the promised isomorphism between the two definitions of the
generalized blob algebra.
\begin{theorem}
Viewing $  F_{\bT^{\blambda_2^j}} $ as elements of $ \HH$ we have the 
following equality in $ \R$
\begin{equation}
f^{-1}( F_{\bT^{\blambda_2^j}}) = \sum_{\substack{\bi \in \II^n \\ i_1 = \kappa_j , i_2 = \kappa_j +1}}  e(\bi)
\end{equation}
corresponding to relation (\ref{eq13}) of $ \B$.
In particular, $ \B^{\prime} = \B$. 
\end{theorem}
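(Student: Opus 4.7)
The plan is to apply the Brundan--Kleshchev and Rouquier isomorphism $f:\R\to\HH$ to both sides of the displayed identity and verify the resulting equality in $\HH$. Both $\sum f(e(\bi))$ and $e_2^j=F_{\bT^{\blambda_2^j}}$ lift canonically to elements of $\HHO$, and since $\HHO$ is free over $\OO$ the lifted identity can be checked after extending scalars to $\K$, inside the semisimple algebra $\HHK$, where both sides have transparent expressions in the seminormal basis $\{F_{\bU}\}_{\bU\in\std(n)}$.

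For the right-hand side, (\ref{givenbythe}) gives $f(e(\bi))=E_{[\bS]}$ when $\bi=\bi^{\bS}$ for some $\bS\in\std(n)$ and zero otherwise, and (\ref{andletusnowdefine}) unpacks $E_{[\bS]}$ as $\sum_{\bT\in[\bS]\cap\std(n)}F_{\bT}$. Summing over all $\bi$ with $i_1=\kappa_j,\,i_2=\kappa_j+1$ and regrouping by $\sim_e$-class yields in $\HHK$
\begin{equation}
X:=\sum_{\substack{\bU\in\std(n)\\ i_1^{\bU}=\kappa_j,\;i_2^{\bU}=\kappa_j+1}}F_{\bU}.
\end{equation}
On the left-hand side, $F_{\bT^{\blambda_2^j}}\in\HHKtwo$ is, by (\ref{idempotentK}), a polynomial $p_j(L_1,L_2)$, and (\ref{by the contents}) lets us expand it in $\HHK$ as $\sum_{\bU\in\std(n)}p_j(c^{\K}_{\bU}(1),c^{\K}_{\bU}(2))\,F_{\bU}$. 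The key computation is therefore to verify that $p_j(c^{\K}_{\bU}(1),c^{\K}_{\bU}(2))=1$ when $\bU(1)=(1,1,j)$ and $\bU(2)=(1,2,j)$, and vanishes otherwise.

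This is where strong adjacency-freeness enters essentially. The admissible values of $c^{\K}_{\bU}(1)$ are exactly $\{\q^{\hat\kappa_m}:m=1,\dots,l\}$ (since $1$ must occupy some position $(1,1,m)$), and the admissible values of $c^{\K}_{\bU}(2)$ coincide with the contents occurring in $\std(2)$, namely $\{\q^{\hat\kappa_m\pm1}\}\cup\{\q^{\hat\kappa_{m'}}:m'\neq m\}$, depending on the position of $\bU(1)$. Conditions $ii)$ and $iv)$ of Definition~\ref{strongadj}, together with the growth condition $i)$, ensure these values are pairwise distinct in $\K$ and do not conflict with the base values $\q^{\hat\kappa_j},\q^{\hat\kappa_j+1}$ except via the intended one. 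Consequently the Lagrange-interpolation product defining $p_j$ vanishes at each non-base admissible pair and equals $1$ at $(\q^{\hat\kappa_j},\q^{\hat\kappa_j+1})$, giving $X=F_{\bT^{\blambda_2^j}}$ in $\HHK$ and hence in $\HHO$ and in $\HH$. Pulling back via $f^{-1}$ yields the displayed identity in $\R$.

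For the final statement $\B=\B'$, note that the two-sided ideal $J\subseteq\R$ whose quotient defines $\B$ is generated by $\{e(\bi):i_1\in\{\kappa_1,\dots,\kappa_l\},\,i_2=i_1+1\}$ via relation (\ref{eq13}); the identity just proved shows $f(J)=\mathcal I_n$, whence $\B=\R/J\cong\HH/\mathcal I_n=\B'$. I expect the main obstacle to be the content-value bookkeeping ensuring that $p_j$ acts as a Kronecker delta on the admissible content pairs; the clean verification requires invoking all of strong adjacency-freeness to separate the relevant content values in $\K$.
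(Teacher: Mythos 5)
Your proposal is correct and follows essentially the same route as the paper: both arguments lift the identity to $\HHK$, expand against the complete system of seminormal idempotents $F_{\bU}$, observe that $F_{\bT^{\blambda_2^j}}$ acts on $F_{\bU}$ as the indicator of the condition $\bU(1)=(1,1,j)$, $\bU(2)=(1,2,j)$, and use the (strong) adjacency-freeness of $\hat{\kappa}$ to identify this condition with the residue condition $i_1=\kappa_j$, $i_2=\kappa_j+1$ before pulling back through $f$. Your explicit bookkeeping of the content separation and of the ideal comparison $f(J)=\mathcal I_n$ only spells out what the paper's terser computation of $F_{\bT^{\blambda_2^j}}E_{[\Bs]}$ leaves implicit.
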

\begin{proof}
  We have that $ 1 = \sum_{\bi \in \II^n }  e(\bi) = \sum_{ \Bs \in \std(n) } f^{-1}(E_{\Bs}).$
  On the other hand we have that
\begin{equation}
  F_{\bT^{\blambda_2^j}} E_{\Bs} =  \sum_{ \bT \in \std(n)} F_{\bT^{\blambda_2^j}} F_{\bT}
  = \left\{ \begin{array}{ll} E_{\Bs} & \mbox{ if } i_1 = \kappa_j , i_2 = \kappa_j +1 \\
0 & \mbox{otherwise} \end{array} \right.
\end{equation}  
and so the Theorem follows.
\end{proof}


\sc 
diego.lobos@pucv.cl, Universidad de Talca/Pontificia Universidad Cat\'olica de Valparaiso, Chile. \newline
steen@inst-mat.utalca.cl, Universidad de Talca, Chile.

\end{document}